\newtheorem{thm}{Theorem}[section]
\newtheorem{lem}[thm]{Lemma}
\newtheorem{prop}[thm]{Proposition}
\newtheorem{cor}[thm]{Corollary}
\theoremstyle{definition}
\newtheorem{dfn}[thm]{Definition}
\theoremstyle{remark}
\newtheorem{remark}[thm]{Remark}
\newcommand{\CA}{{\mathcal{A}}}
\newcommand{\CC}{{\mathcal{C}}}
\newcommand{\CD}{{\mathcal{D}}}
\newcommand{\CF}{{\mathcal{F}}}
\newcommand{\CG}{{\mathcal{G}}}
\newcommand{\CN}{{\mathcal{N}}}
\newcommand{\CI}{{\mathcal{I}}}
\newcommand{\CL}{{\mathcal{L}}}
\newcommand{\CZ}{{\mathcal{Z}}}
\newcommand{\CB}{{\mathcal{B}}}
\newcommand{\CP}{{\mathcal{P}}}
\newcommand{\CW}{{\mathcal{W}}}
\newcommand{\af}{\alpha}
\newcommand{\bt}{\beta}
\newcommand{\gm}{\gamma}
\newcommand{\dt}{\delta}
\newcommand{\ld}{\lambda}
\newcommand{\kp}{\kappa}
\newcommand{\N}{{\mathbb{N}}}
\newcommand{\WCB}{{\widehat{\mathcal{B}}}}
\begin{document}


\title[ A generalized uniqueness theorem for GBDS ]
{ A generalized uniqueness theorem for generalized Boolean dynamical systems }

\author[E. J. Kang]{Eun Ji Kang}
\address{
Research Institute of Mathematics, Seoul National University, Seoul 08826, 
Korea} \email{kkang33\-@\-snu.\-ac.\-kr }



\begin{abstract}
We characterize the canonical diagonal subalgebra of the $C^*$-algebra associated with a generalized Boolean dynamical system. We also introduce a particular commutative subalgebra, which we call the abelian core, in our $C^*$-algebra. We then establish a uniqueness theorem under the assumptions that $\CB$ and $\CL$ are countable, which says that a $*$-homomorphism of our $C^*$-algebra is injective if and only if its restriction to the abelian core is injective.
 \end{abstract}

\maketitle

\section{Introduction}

The $C^*$-algebras associated with generalized Boolean dynamical systems was introduced in \cite{CasK1} as a generalization of graph $C^*$-algebras, and their structures have been extensively studied by several authors.
Among other developments, the boundary path space $ \partial E$ (see Definition \ref{def:boundary path space}) associated with a generalized Boolean dynamical system was introduced in \cite[Definition 7.5]{CasK1}, and it is used to construct groupoid models (\cite{CasK1}, \cite{CasK2}) and a partial action (\cite{CasK2}) associated with the given generalized Boolean dynamical system.

In the study of the $C^*$-algebra $C^*(\CB, \CL, \theta, \CI_\af)$ associated with a generalized Boolean dynamical system $(\CB, \CL, \theta, \CI_\af)$, a central focus is to determine conditions that guarantee the injectivity of a given $*$-homomorphism of $C^*(\CB, \CL, \theta, \CI_\af)$. 
 This is commonly known as a ``uniqueness theorem''. 
One of the well-known uniqueness theorems in the context of generalized Boolean dynamical systems is the Cuntz-Krieger uniqueness theorem (\cite[Theorem 3.6]{CaK3}).
  It asserts that 
  the underlying Boolean dynamical system $(\CB, \CL,\theta)$ satisfies Condition (L), that is, every cycle has an exit if and only if 
 a $*$-homomorphism from $C^*(\CB, \CL,\theta, \CI_\af)$ to a $C^*$-algebra $A$ is injective if (and only if) its restriction to the canonical diagonal subalgebra $D:=C^*(\{s_{\af,A}s_{\af,A}^*: \af \in \CL^* ~\text{and}~ A \in \CI_\af\})$  of $C^*(\CB, \CL,\theta, \CI_\af)$  is injective. 
  However, it is quite challenging to check whether a Boolean dynamical system satisfies Condition (L), and the theorem may not be applicable in certain situations. Therefore, in this paper, we introduce a more generalized version of the Cuntz-Krieger uniqueness theorem that can be utilized without  Condition (L). 
   To be more precise, we define a specific subalgebra denoted by $M$ (see Definition \ref{dfn:abelian core}) within $C^*(\CB, \CL, \theta, \CI_\af)$  that possesses the following lifting property: a $*$-homomorphism of $C^*(\CB, \CL, \theta, \CI_\af)$ is injective if and only if it is injective when restricted to the subalgebra $M$.
        A generalized Cuntz-Krieger Uniqueness theorem has also been established for various classes, such as graph algebras (\cite[Theorem 3.12]{NR2012}), ultragraph algebras (\cite[Theorem 6.11]{CGW2020}), and higher rank graph algebras (\cite[Theorem 7.10]{BNR2014}), among others. 
   In each of the mentioned classes, a distinguished abelian subalgebra with a similar lifting property is defined. This subalgebra is referred to as the abelian core in both graph algebras and ultragraph algebras, and as the cycline subalgebra in higher rank graph algebras.        
   Returning to  our case,  the algebra $M$  is generated by the standard generators $s_{\af,A}s_{\bt,A}^*$ under the conditions where $\af=\bt$; or $\af=\bt\gm$ and $(\gm,A)$ forms a cycle with no exits; or $\bt=\af\gm$ and $(\gm,A)$ forms a cycle with no exits. 
   It's worth noting that the algebra $M$ is also commutative, and this fact will be demonstrated through a direct computation on the generators.
   We  refer to it as the abelian core of $C^*(\CB, \CL, \theta, \CI_\alpha)$.

To proceed with a proof of the uniqueness theorem,
 we use a groupoid model for a generalized Boolean dynamical system, namely,  the transformation groupoid $\mathbb{F} \ltimes_\varphi \partial E$ associated with the partial action $\Phi=(\{U_t\}_{t \in \mathbb{F}}, \{\varphi_t\}_{t \in \mathbb{F}})$ arising from a generalized Boolean dynamical system.
 First, we show that the spectrum of the canonical diagonal subalgebra $D$ of $C^*(\CB, \CL,\theta, \CI_\af)$ is homeomorphic to the boundary path space $\partial E$. Therefore,  the  diagonal subalgebra is isomorphic to the $C^*$-algebra $C_0(\partial E)$ on the unit space $\partial E$ of $\mathbb{F} \ltimes_\varphi \partial E$.
Next,  we describe the abelian core $M$ as the groupoid $C^*$-algebra of the interior of the isotropy group bundle   $\operatorname{Iso}(\mathbb{F} \ltimes_\varphi \partial E)$
of  $\mathbb{F} \ltimes_\varphi \partial E$. 
Then, by applying the uniqueness theorem (\cite[Theorem 3.1(b)]{BNRSW}) for general groupoid $C^*$-algebras  to our case, we  prove that if $\CB$ and $\CL$ are countable, then a  $*$-homomorphism of $C^*(\CB, \CL,\theta, \CI_\af)$ is injective if and only if it is injective on the abelian core. 
We should note that we assume that $\CB$ and $\CL$ are countable because \cite[Theorem 3.1(b)]{BNRSW} requires a groupoid to be second countable.

Lastly, we show that if the underlying Boolean dynamical system $(\CB, \CL,\theta)$ satisfies Condition (L), then  the diagonal subalgebra is a MASA, and the abelian core coincide with  the diagonal subalgebra. Hence, it follows that if $(\CB, \CL,\theta)$ satisfies Condition (L), then the abelian core is a MASA.

The paper is organized as follows. In Section \hyperref[preliminary]{2}, we present the relevant background,  notation and  some basic properties.   
In Section \hyperref[The diagonal subalgebra]{3}, we give a characterization of the canonical diagonal subalgebra.
In Section  \hyperref[GUT]{4}, we introduce the abelian core $M$ of $C^*(\CB, \CL,\theta, \CI_\af)$, and give a characterization of the the isotropy group bundle   $\operatorname{Iso}(\mathbb{F} \ltimes_\varphi \partial E)$. We then identify the abelian core $M$ with the  groupoid $C^*$-algebra of the interior of the isotropy group bundle   $\operatorname{Iso}(\mathbb{F} \ltimes_\varphi \partial E)$
of  $\mathbb{F} \ltimes_\varphi \partial E$, and prove our uniqueness theorem.
Additionally, we describe Condition (L) in terms of the groupoid $\mathbb{F} \ltimes_\varphi \partial E$ and the partial action  $\Phi=(\{U_t\}_{t \in G}, \{\varphi_t\}_{t \in G})$. We then prove that if $(\CB, \CL, \theta)$ satisfies Condition (L), then the abelian core is a MASA.

\section{Preliminaries}\label{preliminary} 

In this section,  we review some necessary background and give a few preliminary results (Lemma \ref{char:ultrafilter cycle}, \ref{range of path} and \ref{char2:ultrafilter cycle}) which will be used later. 

\subsection{Filters}
 Let $(P, \leq)$ be a partially ordered set with least element $0$. Given $X,Y \subseteq P$, we define 
\begin{align*}\uparrow X & :=\{b \in P: a \leq b ~\text{for some}~a \in X\}, \\
\downarrow X &:=\{b \in P: b \leq a ~\text{for some}~a \in X\},
\end{align*}
 and $\uparrow_Y X:= Y \cap \uparrow X$. For each $x \in P \setminus \{0\}$, we write $\uparrow x:=\uparrow \{x\}$. 
We say that $X$  is an  {\it upper set} if $\uparrow X=X$, and that  $X$ is a {\it lower set} if $\downarrow X=X$. We also say that  $X$ is {\it down-directed} if for all $x,y \in X$, there is $z \in X$ such that $z \leq x$ and $z \leq y$.
A non-empty subset  $\xi$ of $P$ is called a {\em filter}  if it is a  down-directed upper set with $0 \notin \xi$.  
A filter that is maximal  amomg filters with respect to inclusion is called an {\it ultrafilter}.

Let $\xi$ be a filter in a lattice $P$ with least elememt $0$. We say that $\xi$ is {\it prime} if for any $x,y \in P$, if $x \vee y \in\xi$, then $x\in\xi$ or $y\in\xi$.

\subsection{Boolean algebras}
 A \emph{Boolean algebra}  $\CB$ is a relatively complemented distributive lattice with least element $\emptyset$
(a Boolean algebras is so-called a {\it generalized Boolean algebra}).
For $A, B \in \CB$, the {\it meet} of $A$ and $B$ is denoted by $A \cap B$, the {\it join} of $A$ and $B$ is denoted by $A \cup B$, and the 
{\it relative complement} of $A$ relative to  $B$ is denoted by 
$B \setminus A$. 
The partial order is given by 
$$ A\subseteq B ~\text{if and only if}~ A \cap B =A$$
for $A, B \in \CB$. 
We say that   $A$ is  a  \emph{subset of} $B$ if $A \subseteq B$.

 A non-empty subset $\CI\subseteq \CB$ is called an {\it ideal} if it is closed under finite unions, that is, $A\cup B\in\CI$ whenever $A,B\in\CI$, and it is a lower set. Every ideal of $\CB$ is again a Boolean algebra.

Let $\xi$ be a filter in a Boolean algebra $\CB$. It is well-known that $\xi$ is an ultrafilter if and only if it is prime, if and only if $\{A \in \CB: A \cap B \neq \emptyset ~\text{for all}~ B \in \xi\} \subseteq \xi$.
We denote by $\widehat{\CB}$ the set of all ultrafilters of $\CB$. 
For each $A \in \CB$, we let $$Z(A):=\{\xi\in\widehat{\CB}:A\in\xi\}.$$ 
Then, the the family $\{Z(A): A \in \CB \}$ is a basis of compact-open sets for a Hausdorff topology on $\widehat{\CB}$. 
We call the set $\widehat{\CB}$ equipped with this topology the {\it Stone dual} of $\CB$.

\subsection{Boolean dynamical systems}

Let $\CB$ and $\CB'$ be Boolean algebras. 
A map $\phi: \CB \rightarrow \CB'$  is  called a {\em Boolean homomorphism} if $$\phi(A \cap B)=\phi(A) \cap \phi(B), \phi(A \cup B)=\phi(A) \cup \phi(B) ~\text{and}~\phi(A \setminus B)=\phi(A) \setminus \phi(B)$$ for all $A,B \in \CB$. A  map $\theta: \CB \rightarrow \CB $ is called an {\it action} on  $\CB$ if it is a Boolean homomorphism with $\theta(\emptyset)=\emptyset$.

Let $\CL$ be a set and $n \in \N$. We define $\CL^n:=\{(\af_1, \dots, \af_n): \af_i \in \CL\}$ and write $\af_1 \dots \af_n$ instead of  $(\af_1, \dots, \af_n) \in \CL^n$.
For a word $\af=\af_1 \dots \af_n \in \CL^n$, its length is denoted by $|\af|$, namely $|\af|=n$. The set $\CL^{\geq 1}=\cup_{n \geq 1} \CL^n$ is the set of all words of positive finite length. We also let  $\CL^*:=\cup_{n \geq 0} \CL^n$, where $\CL^0:=\{\emptyset \}$.
  Similarly, we define  the set of all infinite words by $\CL^\infty=\{(\af_i)_{i \in \N}: \af_i \in \CL \}$ and let 
$\CL^{\leq \infty}:= \CL^* \cup \CL^\infty$. For $\af =(\af_i)_{i \in \N} \in \CL^\infty$, we write $|\af|=\infty$. 

Let $\af \in \CL^*$ and $\bt \in \CL^{ \leq \infty}$. We denote by $\af\bt$ the 
   the concatenation of $\af$ and $\bt$, where we mean $\emptyset \bt=\bt$ and $\af \emptyset=\af$. For $k\in\N$, $\af^k$ is a k-times concatenation of $\af$ and $\af^\infty$ is a infinitely many times concatenation of $\af$, and let $\af^0:=\emptyset$. 
      For $1\leq i\leq j\leq |\bt|$, we let  $\bt_{i,j}=\bt_i\cdots \bt_j$ if $j < \infty$ and $\bt_{i,j}=\bt_i \bt_{i+1}\cdots $ if $j = \infty$. If $j < i$, let $\bt_{i,j} =\emptyset$.
      For $\af,\bt \in \CL^*$, we say that $\af$ is a {\it beginning} of $\bt$ if $\bt=\af\bt'$ for some $\bt'  \in \CL^*$. We say that $\af$ and $\bt$ are {\it comparable} if $\af$ is a beginning of $\bt$ or $\bt$ is a beginning of $\af$.

   A {\em Boolean dynamical system}   (\cite[Definition 2.1]{CaK2})  is a triple $(\CB,\CL,\theta)$ where $\CB$ is a Boolean algebra, $\CL$ is a set, and $\{\theta_\af\}_{\af \in \CL}$ is a set of actions on $\CB$.

   Let  $(\CB, \CL,\theta)$ be a Boolean dynamical system. For $\af=\af_1 \cdots \af_n \in \CL^*$,  the action $\theta_\af: \CB \rightarrow \CB$ is defined as $\theta_\af:=\theta_{\af_n} \circ \cdots \circ \theta_{\af_1}$, where $\theta_\emptyset:=\text{Id}$. 
      For $B \in \CB$, we define $\Delta_B:=\{\af \in \CL:\theta_\af(B) \neq
\emptyset \} $.
We say that $A \in \CB$ is {\em regular} (\cite[Definition 3.5]{COP})
if for any $\emptyset \neq B \subseteq A$, we have $0 < |\Delta_B| < \infty$.
We write $\CB_{reg}$ for the
set of all regular sets where we will include $\emptyset$.

 Let $\af=\af_1 \cdots \af_{n} \in \CL^{\geq 1}$,  $A \in \CB\setminus\{\emptyset\}$, and $\eta \in \widehat{\CB}$. A pair $(\af,A)$ is  a \emph{cycle} (\cite[Definition 9.5]{COP}) if $B=\theta_\af(B)$ for $B \subseteq A$.  A cycle $(\af,A)$  has \emph{no exits} (\cite[Definition 9.5]{COP}) if for $t\in\{1,2,\dots,n\}$ and $ \emptyset \neq B \subseteq \theta_{\beta_{1,t}}(A)$, we have $B \in \CB_{reg}$ with $\Delta_{B}=\{\af_{t+1}\}$ (where  $\af_{n+1}:=\af_1$).
 We say that $(\CB,\CL,\theta)$ satisfies \emph{Condition (L)} (\cite[Definition 9.5]{COP}) if it has no cycle with no exits.
 A pair $(\af,\eta)$ is an  \emph{ultrafilter cycle} (\cite[Definition 3.1]{CaK1}) if $\theta_\af(A)\in\eta$ for all $A\in\eta$.

\begin{lem}\label{char:ultrafilter cycle} Let $\af \in \CL^{\geq 1}$ and $\eta \in \widehat{\CB}$. Then $(\af, \eta)$ is an ultrafilter cycle if and only if $A \cap \theta_{\af}(A) \neq \emptyset$ for all $A \in \eta$.
\end{lem}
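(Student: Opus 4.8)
The plan is to prove the two implications separately, using in each direction the characterizations of ultrafilters recorded just above the statement: $\eta$ is an ultrafilter if and only if it is prime, if and only if $\{C \in \CB : C \cap B \neq \emptyset~\text{for all}~B \in \eta\} \subseteq \eta$. The only structural facts I will need are that a filter is a down-directed upper set not containing $\emptyset$, and that each $\theta_\af$ is an action and hence a Boolean homomorphism, so $\theta_\af(A \cap B) = \theta_\af(A) \cap \theta_\af(B)$.

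For the forward implication, suppose $(\af, \eta)$ is an ultrafilter cycle, so $\theta_\af(A) \in \eta$ whenever $A \in \eta$. Fixing $A \in \eta$, I would use down-directedness of $\eta$ to find $z \in \eta$ with $z \subseteq A$ and $z \subseteq \theta_\af(A)$; then $z \subseteq A \cap \theta_\af(A)$, and since $\emptyset \notin \eta$ forces $z \neq \emptyset$, we get $A \cap \theta_\af(A) \neq \emptyset$. (Equivalently, $A \cap \theta_\af(A)$ lies in $\eta$ as the meet of two members of a filter, so it is nonempty.) This direction is essentially immediate.

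The content is in the reverse implication, and this is the step I expect to require the most care. Assume $A \cap \theta_\af(A) \neq \emptyset$ for every $A \in \eta$, and fix $A \in \eta$; the goal is $\theta_\af(A) \in \eta$. By the ultrafilter characterization it suffices to verify $\theta_\af(A) \cap B \neq \emptyset$ for every $B \in \eta$. Given such a $B$, note that $A \cap B \in \eta$ since $\eta$ is a filter, so applying the hypothesis to the element $A \cap B$ yields $(A \cap B) \cap \theta_\af(A \cap B) \neq \emptyset$. The key point is multiplicativity of $\theta_\af$, which gives $\theta_\af(A \cap B) = \theta_\af(A) \cap \theta_\af(B)$; substituting and then discarding the factors $A$ and $\theta_\af(B)$ gives $\emptyset \neq (A \cap B) \cap \theta_\af(A) \cap \theta_\af(B) \subseteq B \cap \theta_\af(A)$. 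Hence $\theta_\af(A) \cap B \neq \emptyset$ for all $B \in \eta$, and the ultrafilter characterization delivers $\theta_\af(A) \in \eta$.

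The only genuine subtlety is to feed the intersection $A \cap B$, rather than $A$ itself, into the hypothesis, so that multiplicativity of $\theta_\af$ produces the factor $B$ needed to meet an arbitrary element of $\eta$; once this is set up, the remaining steps are routine lattice inequalities.
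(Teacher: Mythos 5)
Your proof is correct and follows essentially the same route as the paper: in the reverse direction both arguments feed the intersection $A \cap B$ into the hypothesis and use multiplicativity (monotonicity) of $\theta_\af$ to produce the needed overlap with an arbitrary $B \in \eta$, then invoke the ultrafilter characterization. The only cosmetic difference is that the paper first records the intermediate claim $A \cap \theta_\af(A) \in \eta$ and then passes to $\theta_\af(A)$ by the upper-set property, whereas you apply the ultrafilter criterion to $\theta_\af(A)$ directly.
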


\begin{proof} ($\Rightarrow$) It is clear.

($\Leftarrow$) We first claim that $A \cap \theta_{\af}(A) \in \eta$ for all $A \in \eta$. To prove it, it is enough to show that $(A \cap \theta_{\af}(A)) \cap B \neq \emptyset$ for all $B \in \eta$. Let $B \in \eta$. Then $A \cap B \in \eta$, so $(A \cap B) \cap \theta_{\af}(A \cap B) \neq \emptyset$. Since  $(A \cap B) \cap \theta_{\af}(A \cap B) \subseteq (A \cap B) \cap \theta_{\af}(A) $, we have $ (A \cap \theta_{\af}(A)) \cap B \neq \emptyset$.

Now, for all $A \in \eta$, we have $\eta \ni A \cap \theta_{\af}(A) \subseteq \theta_{\af}(A)$, and hence,  $\theta_{\af}(A) \in \eta$. So, $(\af, \eta)$ is an ultrafilter cycle. 
\end{proof}

\subsection{Generalized Boolean dynamical systems and their $C^*$-algebras}\label{GBDS}

A {\em generalized Boolean dynamical system} (\cite[Definition 3.2]{CaK2}) is a quadruple  $(\CB,\CL,\theta,\CI_\alpha)$ where  $(\CB,\CL,\theta)$ is  a Boolean dynamical system  and  $\{\CI_\alpha:\alpha\in\CL\}$ is a family of ideals in $\CB$ such that $ \theta_\af(\CB) \subseteq\CI_\alpha$ for each $\alpha\in\CL$.

\begin{dfn}\label{def:representation of GBDS} (\cite[Definition 3.3]{CaK2})
	Let $(\CB,\CL,\theta, \CI_\af)$ be a  generalized Boolean dynamical system. A 
	\emph{$(\CB,\CL,\theta, \CI_\af)$-representation} in a $C^*$-algebra $A $
		is   families of projections $\{P_A:A\in\mathcal{B}\}$ and  partial isometries $\{S_{\alpha,B}:\alpha\in\mathcal{L},\ B\in\mathcal{I}_\alpha\}$ 
	that satisfy  
	\begin{enumerate}
		\item[(i)] $P_\emptyset=0$, $P_{A\cap A'}=p_Ap_{A'}$, and $P_{A\cup A'}=P_A+P_{A'}-P_{A\cap A'}$ for $A,A'\in\mathcal{B}$;
		\item[(ii)] $P_AS_{\alpha,B}=S_{\alpha,  B}P_{\theta_\af(A)}$  for $A\in\mathcal{B}$, $\alpha \in\mathcal{L}$ and $B\in\mathcal{I}_\alpha$;
		\item[(iii)] $S_{\alpha,B}^*S_{\alpha',B'}=\delta_{\alpha,\alpha'}P_{B\cap B'}$ for  $\alpha,\alpha'\in\mathcal{L}$, $B\in\mathcal{I}_\alpha$ and $B'\in\mathcal{I}_{\alpha'}$;
		\item[(iv)] $P_A=\sum_{\af \in\Delta_A}S_{\af,\theta_\af(A)}S_{\af,\theta_\af(A)}^*$ for   $A\in \mathcal{B}_{reg}$. 
	\end{enumerate}
	Given a $(\CB, \CL, \theta, \CI_\af)$-representation $\{P_A, S_{\af,B}\}$ in a $C^*$-algebra $\CA$, we denote by $C^*(P_A, S_{\af,B})$ the $C^*$-subalgebra of $\CA$ generated by $\{ P_A,  S_{\af,B}\}$.

	 \end{dfn}

Put $\CI_\emptyset := \CB$. For $\af=\af_1\af_2 \cdots \af_n \in \CL^{\geq 1}$, we define $\CI_\af$ to be the smallest ideal of $\CB$ that contains $\theta_{\af_2 \cdots \af_n}(\CI_{\af_1})$, that is, 
\begin{align*}
	\CI_\af:=\{A \in \CB : A \subseteq \theta_{\af_2 \cdots \af_n}(B)~\text{for some }~ B \in \CI_{\af_1}\}.
\end{align*}
Given  $A\in\CB$ and  $\af=\af_1\af_2 \cdots \af_n \in \CL^{\geq 1}$, we set  $S_{\emptyset, A}:=P_A$ and 	
\[
S_{\af,A}:=S_{\af_1,B}S_{\af_2, \theta_{\af_2}(B)}S_{\af_3, \theta_{\af_2\af_3}(B)} \cdots S_{\af_n,A},
\] 
where $B \in \CI_{\af_1}$ is such that $A \subseteq \theta_{\af_2 \cdots \af_n}(B)$.

\begin{remark}\label{computations of generators} Let $\{P_A, S_{\alpha,B}: A\in\CB, \alpha\in\CL ~\text{and}~ B\in\CI_\alpha\}$ be a $(\CB,\CL,\theta,\CI_\alpha)$-representation.
\begin{enumerate}
\item (\cite[Lemma 3.9]{CaK2}) For $\af, \bt \in \CL^*$, $A \in \CI_\af$ and $B \in \CI_\bt$, we have
\[
S_{\af,A}^*S_{\bt,B}= \left\{ 
\begin{array}{ll}
    P_{A \cap B} & \hbox{if\ }\af =\bt\\
    S_{\af', A \cap \theta_{\af'}(B)}^* & \hbox{if\ }\af =\bt\af' \\
    S_{\bt',B \cap \theta_{\bt'}(A)}   & \hbox{if\ } \bt=\af\bt' \\
    0 & \hbox{otherwise.} \\
\end{array}
\right.
\]
\item  We then have
that
\begin{align*}
C^*(P_A,S_{\alpha,B})
=\overline{{\rm \operatorname{span}}}\{S_{\af,A}S_{\bt,A}^*: \af,\bt
\in \CL^* ~\text{and}~ A \in \CI_\af\cap \CI_\bt \}.\label{eq:3}
\end{align*}
\item By (1), it easily follows that  
\[
(S_{\af,A}S_{\af,A}^*)(S_{\bt,B}S_{\bt,B}^*)= \left\{ 
\begin{array}{ll}
    S_{\af, A \cap B}S_{\af, A \cap B}^* & \hbox{if\ }\af=\bt \\
    S_{\af, A \cap \theta_{\af'}(B)}S_{\af,  A \cap \theta_{\af'}(B)}^*    & \hbox{if\ }\af=\bt\af'\\
    S_{\bt, B \cap \theta_{\bt'}(A)}S_{\bt, B \cap \theta_{\bt'}(A )}^*                  & \hbox{if\ } \bt=\af\bt' \\
    0   & \hbox{otherwise.} 
\end{array}
\right.
\]
\item By (3), it is easy to see that the elements of the form $S_{\af,A}S_{\af,A}^*$ are commuting projections. 
\end{enumerate}
\end{remark}

We denote by $C^*(\CB, \CL,\theta, \CI_\af)$ the $C^*$-algebra generated by a universal 
$(\CB,\CL,\theta, \CI_\af)$-representation $\{p_A, s_{\af,B}\}$. Let 
$$G_{(\CB, \CL,\theta, \CI_\af)}:=\{s_{\af,A}s_{\bt,A}^*: \af,\bt \in \CL^*, A \in \CI_\af \cap \CI_\bt\}$$
and we call  the elements $s_{\af,A}s_{\bt,A}^*$ in the above set  the {\it standard generators}.

\begin{dfn} Let $(\CB, \CL, \theta, \CI_\af)$ be a generalized Boolean dynamical system.
The {\it diagonal subalgebra $D(\CB, \CL,\theta, \CI_\af)$} of $C^*(\CB, \CL,\theta, \CI_\af)$ is the subalgebra of $C^*(\CB, \CL,\theta, \CI_\af)$ generated by the commuting projections $s_{\af,A}s_{\af,A}^*$, that is,
$$D(\CB, \CL,\theta, \CI_\af)=C^*(\{s_{\af,A}s_{\af,A}^*: \af \in \CL^* ~\text{and}~ A \in \CI_\af\}).$$
\end{dfn}

By Remark \ref{computations of generators}(3), it is easy to see that $$D(\CB, \CL,\theta, \CI_\af)=\overline{span}\{s_{\af,A}s_{\af,A}^*: \af \in \CL^* ~\text{and}~ A \in \CI_\af\}$$and   it is a commutative $C^*$-algebra. We simply write $D:=D(\CB, \CL,\theta, \CI_\af)$ if there is no confusion.

\subsection{topological graph}
Let $(\CB, \CL,\theta, \CI_\af)$ be a generalized Boolean dynamical system.
 Following \cite{CasK1}, we let  $\CW^{\geq 1}=\{\alpha\in\CL^{\geq 1}:\CI_\alpha\neq \{\emptyset\}\}$,  $\CW^*=\{\alpha\in\CL^*:\CI_\alpha\neq \{\emptyset\}\}$, $\CW^{\infty}=\{\alpha\in\CL^{\infty}:\alpha_{1,n}\in\CW^{\geq 1} ~\text{for all}~ n \geq 1\}$ and $\CW^{\leq\infty}=\CW^*\cup \CW^{\infty}$.
 We let 
 $$X_\emptyset:=\WCB ~\text{and}~X_\af:=\widehat{\CI_\af} $$
 for each $\af \in \CW^*$.
 We equip $X_\emptyset$ with the tolopogy given by the basis $\{Z(A):A\in\CB\}$ and $X_\af$
 with the topology generated by $\{Z(\af, A): A\in\CI_\af\}$, where 
  we let $Z(\af, A):=\{\CF \in X_\af: A \in \CF\}$ for $A \in \CI_\af$. 
  We also  equip  $X_\emptyset\cup\{\emptyset\}$  with a suitable topology; if $\CB$ is unital, the topology is such that $\{\emptyset\}$ is an isolated point. If $\CB$ is not unital, then $\emptyset$ plays the role of the point at infinity in the one-point compactification of $X_\emptyset$.

Let $\af, \bt \in \CW^*$ be such that $\af\bt \in \CW^*$. Consider an open  subspace $X_{(\af)\bt}$ of $X_\bt$ defined by 
$$X_{(\af)\bt}:=\{\CF \in X_\bt:\CF \cap \CI_{\af\bt}\neq \emptyset  \}.$$ Note that $X_{(\emptyset)\emptyset}=X_{\emptyset}$ and $X_{(\emptyset)\bt}=X_\bt$. 
Then there are a continuous map 
 $$g_{(\af)\bt}: X_{(\af)\bt} \to X_{\af\bt}$$ given by $$  g_{(\af)\bt}(\CF):=     \CF \cap \CI_{\af\bt}  $$
for  $\CF \in X_{(\af)\bt}$, and 
 a continuous map 
$$h_{[\af]\bt}:X_{\af\bt} \to X_{(\af)\bt}$$ given by $$h_{[\af]\bt}(\CF):=\uparrow_{\CI_\bt}\CF$$ for  $\CF \in X_{\af\bt}$.
Note that  $g_{(\emptyset)\emptyset}$ and $h_{[\emptyset]\emptyset}$ are the identity functions on $X_\emptyset$, and that   $g_{(\emptyset)\bt}$ and $h_{[\emptyset]\bt}$ are the identity functions on $X_\bt$. Also, we notice that   $h_{[\af]\bt}: X_{\af\bt} \to X_{(\af)\bt}$ and $g_{(\af)\bt}:X_{(\af)\bt} \to X_{\af\bt}$ are mutually inverses.

Let $\af, \bt \in \CW^* \setminus \{\emptyset\}$ be such that $\af\bt \in \CW^*$. We also have a continuous map
$$f_{\af[\bt]}: X_{\af\bt} \to X_\af $$ given by $$
 f_{\af[\bt]}(\CF)=\{A \in \CI_\af: \theta_\bt(A) \in \CF\}$$
 for $\CF \in X_{\af\bt}$. For $\af=\emptyset$, the map $f_{\emptyset[\bt]}: X_\bt \to X_\emptyset \cup \{\emptyset\} $ defined as above is also continuous (\cite[Lemma 3.23]{CasK1}).

 Now, we let
  \begin{align*}
 E^0&:=X_\emptyset, \\
 F^0&:=X_\emptyset\cup\{\emptyset\} \\
 E^1&:=\bigl\{e^\alpha_\eta:\alpha\in\CL,\ \eta\in X_\alpha\bigr\}
 \end{align*}
 and we 
 equip $E^1$ with the topology generated by 
$\bigcup_{\alpha\in\CL} \{Z^1(\af, B):B\in\CI_\alpha\}, $
where $$Z^1(\af, B):=\{e^\af_\eta: \eta \in X_\af  , B \in \eta\}.$$  Note that $E^1_{(\CB,\CL,\theta,\CI_\alpha)}$ is homeomorphic to the disjoint union of the family $\{X_{\af}\}_{\af\in\CL}$.

We define the maps
 $d:E^1\to E^0$ and $r:E^1\to F^0$ by 
\[
	d(e^\alpha_\eta)=h_{[\alpha]\emptyset}(\eta) \text{ and } r(e^\alpha_\eta)=f_{\emptyset[\af]}(\eta).
\]
 Then, $(E^1, d,r)$ is a topological correspondence from $E^0$ to $F^0$ (\cite[Proposition 7.1]{CasK1}).

Given $n \geq 2$,  we define a space  of paths with length $n$ by
\[E^{n}:=\{(e^{\af_1}_{\eta_1},\ldots,e^{\af_n}_{\eta_n}) \in \prod_{i=1}^n E^1:d(e^{\af_i}_{\eta_i})=r(e^{\af_{i+1}}_{\eta_{i+1}}) ~\text{for}~1\leq i<n\},\]
where we equip it with a subspace topology of the product space $\prod_{i=1}^n E^1$.   We  usually write $e^{\af_1}_{\eta_1} \cdots e^{\af_n}_{\eta_n}$ for a path  $(e^{\af_1}_{\eta_1},\ldots,e^{\af_n}_{\eta_n}) 
\in E^n$.
We define  the {\it finite path space} by $E^*:= \sqcup_{n=0}^{\infty} E^n$ endowed with the disjoint union topology, and  the {\it infinite path space} by
\[E^{\infty}:=\{(e^{\af_i}_{\eta_i})_{i\in\mathbb{N}}\in \prod_{i=1}^\infty E^1:d(e^{\af_i}_{\eta_i})=r(e^{\af_{i+1}}_{\eta_{i+1}}) ~\text{for}~ i\in\mathbb{N}\}.\]
For $\xi \in E^0$, we let $r(\xi)=d(\xi)=\xi$.  
For a path $\mu=e^{\af_1}_{\eta_1} \cdots e^{\af_n}_{\eta_n} \in E^n$, we let $d(\mu)=d(e^{\af_n}_{\eta_n})$ and $r(\mu)=r(e^{\af_1}_{\eta_1})$ if $n\geq 1$. For an infinite path $\mu=(e^{\af_i}_{\eta_i})_{i\in\mathbb{N}}$, we define $r(\mu)=r(e^{\af_1}_{\eta_1})$.
  We denote by $|\mu|$ the length of a path $\mu \in E^* \sqcup E^\infty$ and regard a vertex in $E^0$ as a path of length 0.
For  $1\leq i\leq j\leq |\mu|$, we also denote by $\mu_{[i,j]}$ the sub-path $e^{\af_i}_{\eta_i} \cdots e^{\af_j}_{\eta_j}$ of  $\mu=(e^{\af_n}_{\eta_n})_{n\in\mathbb{N}}$, where $\mu_{[i,i]}=e^{\af_i}_{\eta_i}$. If $j < i$, set $\mu_{[i,j]} =\emptyset$.

For $\mu \in E^*$ and $\nu \in E^* \sqcup E^\infty$, we write $\mu\nu$  for the concatenation of $\mu$ and $\nu$. For   $\mu \in E^* \setminus E^0$ and $n \in \N$, 
$\mu^n$ represents $\mu$ concatenated $n$ times and $\mu^\infty$ is $\mu$ concatenated infinitely many times.

A finite path $\mu \in E^* \setminus E^0$ is called a {\it loop} if $r(\mu) = d(\mu)$. The vertex $r(\mu)$
is called the {\it base point} of $\mu$. A loop $\mu=e^{\af_1}_{\eta_1} \cdots e^{\af_n}_{\eta_n}$ is said to be {\it without entrances} if $r^{-1}(r(e^{\af_i}_{\eta_i}))=\{e^{\af_i}_{\eta_i}\}$ for $i=1, \cdots, n$.

\begin{lem} \label{range of path} For $\mu=e^{\af_1}_{\eta_1} \cdots e^{\af_n}_{\eta_n} \in E^*$, we have 
\begin{align*} 
r(\mu)&=f_{\emptyset[\af_1 \cdots \af_n]}(g_{(\af_1 \cdots \af_{n-1})\af_n}(\eta_n))\\
&=f_{\emptyset[\af_1 \cdots \af_n]}(g_{(\af_1 \cdots \af_n)\emptyset}(\eta)),
\end{align*}
where we put $\eta:=h_{[\af_n]\emptyset}(\eta_n) \in \widehat{\CB}$.
\end{lem}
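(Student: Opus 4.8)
The plan is to unwind the range map and then telescope the $n-1$ compatibility conditions defining a path. Since $r(\mu)=r(e^{\af_1}_{\eta_1})=f_{\emptyset[\af_1]}(\eta_1)$, the definition of $f_{\emptyset[\af_1]}$ gives
\[
r(\mu)=\{A\in\CB:\theta_{\af_1}(A)\in\eta_1\}.
\]
Both right-hand sides in the statement have the form $f_{\emptyset[\af_1\cdots\af_n]}(\zeta)=\{A\in\CB:\theta_{\af_1\cdots\af_n}(A)\in\zeta\}$ for a suitable $\zeta\in X_{\af_1\cdots\af_n}$, so the whole statement reduces to an identity between subsets of $\CB$, and I would prove it by relating $\eta_1$ to $\eta_n$ through the path conditions.

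The technical heart is the claim that for every $1\le i\le n$ and every $B\in\CI_{\af_i}$,
\[
B\in\eta_i \iff \theta_{\af_{i+1}\cdots\af_n}(B)\in\eta_n,
\]
with the convention that $\theta$ over the empty word is the identity; I would establish this by downward induction on $i$, the case $i=n$ being trivial. For the inductive step I would rewrite the compatibility condition $d(e^{\af_i}_{\eta_i})=r(e^{\af_{i+1}}_{\eta_{i+1}})$, that is $h_{[\af_i]\emptyset}(\eta_i)=f_{\emptyset[\af_{i+1}]}(\eta_{i+1})$, as the equality of subsets of $\CB$
\[
\uparrow_\CB\eta_i=\{A\in\CB:\theta_{\af_{i+1}}(A)\in\eta_{i+1}\}.
\]
Restricting to $B\in\CI_{\af_i}$ and using that $\eta_i$ is an upper set inside $\CI_{\af_i}$ yields the single-step equivalence $B\in\eta_i \iff \theta_{\af_{i+1}}(B)\in\eta_{i+1}$. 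Since $\theta_{\af_{i+1}}(B)\in\CI_{\af_{i+1}}$, the inductive hypothesis applies to it, and together with $\theta_{\af_{i+2}\cdots\af_n}\circ\theta_{\af_{i+1}}=\theta_{\af_{i+1}\cdots\af_n}$ this completes the step.

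Applying the claim with $i=1$ and $B=\theta_{\af_1}(A)\in\CI_{\af_1}$ gives $\theta_{\af_1}(A)\in\eta_1\iff\theta_{\af_1\cdots\af_n}(A)\in\eta_n$, hence $r(\mu)=\{A\in\CB:\theta_{\af_1\cdots\af_n}(A)\in\eta_n\}$. Two bookkeeping facts then finish the argument. First, $\theta_{\af_1\cdots\af_n}(A)\in\CI_{\af_1\cdots\af_n}$ for every $A$, because $\theta_{\af_1}(A)\in\CI_{\af_1}$ and $\theta_{\af_2\cdots\af_n}(\CI_{\af_1})\subseteq\CI_{\af_1\cdots\af_n}$ by definition of the latter ideal; hence membership in $\eta_n$ coincides with membership in $\eta_n\cap\CI_{\af_1\cdots\af_n}=g_{(\af_1\cdots\af_{n-1})\af_n}(\eta_n)$, which is the first displayed equality. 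Second, since $\CI_{\af_1\cdots\af_n}\subseteq\CI_{\af_n}$ and $\eta_n$ is an upper set in $\CI_{\af_n}$, I would check $\eta_n\cap\CI_{\af_1\cdots\af_n}=(\uparrow_\CB\eta_n)\cap\CI_{\af_1\cdots\af_n}$, that is $g_{(\af_1\cdots\af_{n-1})\af_n}(\eta_n)=g_{(\af_1\cdots\af_n)\emptyset}(\eta)$ with $\eta=h_{[\af_n]\emptyset}(\eta_n)$, which is the second displayed equality.

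I expect the main obstacle to be the single-step equivalence in the second paragraph, specifically its reverse implication, since this is where I must pass between $\uparrow_\CB\eta_i$, viewed in $\CB$, and $\eta_i$, viewed in $\CI_{\af_i}$; this passage relies essentially on the ultrafilter (upper-set) property of $\eta_i$ together with the containments among the ideals $\CI_\af$. The same upper-set bookkeeping reappears in the final comparison of the two $g$-maps, so isolating that one lemma cleanly should make the remainder routine.
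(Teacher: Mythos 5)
Your proposal is correct, and it reaches the two displayed equalities by a genuinely different route than the paper. The paper's proof is essentially a citation: the first equality $r(\mu)=f_{\emptyset[\af_1\cdots\af_n]}(g_{(\af_1\cdots\af_{n-1})\af_n}(\eta_n))$ is quoted directly from \cite[Lemma 3.3]{CasK2}, and the second then follows in two lines by writing $\eta_n=g_{(\af_n)\emptyset}(h_{[\af_n]\emptyset}(\eta_n))$ (the maps $g_{(\af)\bt}$ and $h_{[\af]\bt}$ are mutually inverse) and invoking the composition rule $g_{(\af_1\cdots\af_{n-1})\af_n}\circ g_{(\af_n)\emptyset}=g_{(\af_1\cdots\af_n)\emptyset}$. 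You instead work from first principles: your downward induction, which telescopes the compatibility conditions $\uparrow_\CB\eta_i=\{A\in\CB:\theta_{\af_{i+1}}(A)\in\eta_{i+1}\}$ into the single equivalence $B\in\eta_1\iff\theta_{\af_2\cdots\af_n}(B)\in\eta_n$ for $B\in\CI_{\af_1}$, is in effect a self-contained re-proof of the cited lemma, and your second bookkeeping step, $\eta_n\cap\CI_{\af_1\cdots\af_n}=(\uparrow_\CB\eta_n)\cap\CI_{\af_1\cdots\af_n}$, is precisely the set-theoretic content of the paper's $g$-composition rule; both rest on the containment $\CI_{\af_1\cdots\af_n}\subseteq\CI_{\af_n}$, which you assert without proof but which is routine (every element of $\CI_{\af_1\cdots\af_n}$ lies below a set $\theta_{\af_2\cdots\af_n}(B)\in\theta_{\af_n}(\CB)\subseteq\CI_{\af_n}$, and $\CI_{\af_n}$ is a lower set), and the paper itself uses it elsewhere. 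All the individual steps check out: the reverse implication you flag as the main obstacle needs only that $\eta_i$ is an upper set in $\CI_{\af_i}$ (the filter property; maximality of the ultrafilter is never actually used), and the inductive step legitimately applies the hypothesis to $\theta_{\af_{i+1}}(B)$ because the axiom $\theta_\af(\CB)\subseteq\CI_\af$ of a generalized Boolean dynamical system places it in $\CI_{\af_{i+1}}$. As for what each approach buys: the paper's argument is two lines but opaque without chasing the reference to \cite{CasK2}; yours is longer but verifiable entirely within this paper, and it makes explicit exactly which structural inputs --- the axiom $\theta_\af(\CB)\subseteq\CI_\af$, the ideal containments, and the upper-set property of filters --- the lemma actually depends on (your argument also shows, as a by-product, that $\eta_n\cap\CI_{\af_1\cdots\af_n}\neq\emptyset$, i.e.\ that the expression $g_{(\af_1\cdots\af_{n-1})\af_n}(\eta_n)$ is well defined for any path, a point both the statement and the paper's proof take for granted).
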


\begin{proof}
We see that
\begin{align*} r(\mu)&=f_{\emptyset[\af_1 \cdots \af_n]}(g_{(\af_1 \cdots \af_{n-1})\af_n}(\eta_n))\\
& =f_{\emptyset[\af_1 \cdots \af_n]}(g_{(\af_1 \cdots \af_{n-1})\af_n}(g_{(\af_n)\emptyset}(h_{[\af_n]\emptyset}(\eta_n) )) \\
&=f_{\emptyset[\af_1 \cdots \af_n]}(g_{(\af_1 \cdots \af_n)\emptyset}(h_{[\af_n]\emptyset}(\eta_n))) \\
&=f_{\emptyset[\af_1 \cdots \af_n]}(g_{(\af_1 \cdots \af_n)\emptyset}(\eta)),
\end{align*}
where the first equation follows from \cite[Lemma 3.3]{CasK2}.
\end{proof}

\begin{lem}\label{char2:ultrafilter cycle} Let $\mu=e^{\af_1}_{\eta_1} \cdots e^{\af_n}_{\eta_n} \in E^*$ and put $\eta:=h_{[\af_n]\emptyset}(\eta_n) \in \widehat{\CB}$. Then $(\af_1 \cdots \af_n, \eta )$ in an ultrafilter cycle if and only if $\mu$ is a loop at $\eta$. 
\end{lem}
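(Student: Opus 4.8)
The plan is to reduce the statement to an explicit description of the range map $r(\mu)$ coming from Lemma \ref{range of path}, and then to invoke the maximality of ultrafilters. Write $\af := \af_1 \cdots \af_n$ throughout. First I would observe that the source of $\mu$ is already pinned down by the hypothesis: by definition of $d$ we have $d(\mu) = d(e^{\af_n}_{\eta_n}) = h_{[\af_n]\emptyset}(\eta_n) = \eta$. Hence $\mu$ is a loop at $\eta$ if and only if $r(\mu) = d(\mu) = \eta$, so it suffices to compare $r(\mu)$ with $\eta$.

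Next I would compute $r(\mu)$ concretely. By Lemma \ref{range of path}, $r(\mu) = f_{\emptyset[\af]}(g_{(\af)\emptyset}(\eta))$, and this composite is well defined precisely because $\mu \in E^*$ forces $\eta \in X_{(\af)\emptyset}$, i.e.\ $\eta \cap \CI_\af \neq \emptyset$. Unwinding the definitions of the two structure maps gives $g_{(\af)\emptyset}(\eta) = \eta \cap \CI_\af$ and then
\[
r(\mu) = f_{\emptyset[\af]}(\eta \cap \CI_\af) = \{A \in \CB : \theta_\af(A) \in \eta \cap \CI_\af\}.
\]
The key simplification is that $\theta_\af(A) \in \CI_\af$ for every $A \in \CB$: this follows from $\theta_{\af_1}(\CB) \subseteq \CI_{\af_1}$ together with the definition of $\CI_\af$ as the smallest ideal containing $\theta_{\af_2 \cdots \af_n}(\CI_{\af_1})$, since $\theta_\af(A) = \theta_{\af_2 \cdots \af_n}(\theta_{\af_1}(A))$ and $\theta_{\af_1}(A) \in \CI_{\af_1}$. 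Consequently the constraint $\theta_\af(A) \in \eta \cap \CI_\af$ is equivalent to $\theta_\af(A) \in \eta$, so that
\[
r(\mu) = \{A \in \CB : \theta_\af(A) \in \eta\}.
\]

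Finally I would match this against the definition of an ultrafilter cycle. By definition $(\af, \eta)$ is an ultrafilter cycle if and only if $\theta_\af(A) \in \eta$ for all $A \in \eta$, which says exactly that $\eta \subseteq r(\mu)$. It then remains to see that $\eta \subseteq r(\mu)$ is equivalent to $\eta = r(\mu)$. The implication $\Leftarrow$ is trivial; for $\Rightarrow$, if $\eta \subseteq r(\mu)$ then $r(\mu)$ contains a nonempty set, so $r(\mu) \neq \emptyset$, meaning $r(\mu)$ lies in $X_\emptyset = \widehat{\CB}$ and is a genuine ultrafilter rather than the point at infinity; since $\eta$ is maximal among filters, $\eta \subseteq r(\mu)$ forces $\eta = r(\mu)$. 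Chaining the equivalences, $(\af,\eta)$ is an ultrafilter cycle $\iff \eta \subseteq r(\mu) \iff r(\mu) = \eta \iff \mu$ is a loop at $\eta$.

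I do not expect a serious obstacle here: the substance is the bookkeeping of the structure maps $g_{(\af)\emptyset}$ and $f_{\emptyset[\af]}$ via Lemma \ref{range of path}. The single point requiring care is ensuring that $r(\mu)$ is a genuine ultrafilter (and not the adjoined point $\emptyset$) before applying maximality, which is exactly what either hypothesis---being an ultrafilter cycle, or being a loop---supplies.
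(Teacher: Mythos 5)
Your proof is correct and follows essentially the same route as the paper's: both reduce the statement to the identity $r(\mu)=f_{\emptyset[\af_1\cdots\af_n]}(g_{(\af_1\cdots\af_n)\emptyset}(\eta))$ from Lemma \ref{range of path}, observe that $\theta_{\af}(A)\in\CI_{\af}$ so the ultrafilter-cycle condition says exactly $\eta\subseteq r(\mu)$, and then upgrade the inclusion to equality by maximality of ultrafilters. Your explicit check that $r(\mu)$ is a genuine ultrafilter rather than the adjoined point $\emptyset$ is a small refinement the paper leaves implicit in its phrase ``since both are ultrafilters,'' but the argument is the same.
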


\begin{proof} 
    $(\Rightarrow)$    Choose $A \in \eta$. Since $(\af_1 \cdots \af_n, \eta)$ in an ultrafilter cycle, we have $\theta_{\af_1 \cdots \af_n}(A) \in \eta \cap \CI_{\af_1 \cdots \af_n}$, so, $A \in f_{\emptyset[\af_1 \cdots \af_n]}(g_{(\af_1 \cdots \af_n)\emptyset}(\eta))$. Thus, $\eta \subseteq  f_{\emptyset[\af_1 \cdots \af_n]}(g_{(\af_1 \cdots \af_n)\emptyset}(\eta))$. Then, we have $\eta =  f_{\emptyset[\af_1 \cdots \af_n]}(g_{(\af_1 \cdots \af_n)\emptyset}(\eta)) (=r(\mu))$ since both are ultrafilters.
    
    Hence,  $r(\mu)=\eta=d(\mu)$.  So,  $\mu$ is a loop at $\eta$.

$(\Leftarrow)$ Since $\mu$ is a loop, we have 
$$f_{\emptyset[\af_1 \cdots \af_n]}(g_{(\af_1 \cdots \af_n)\emptyset}(\eta))=r(\mu)=d(\mu)=\eta.$$ It means that $(\af_1 \cdots \af_n, \eta )$ in an ultrafilter cycle.
\end{proof}

\subsection{The boundary path space $\partial E$}
Let $E=(E^1,d,r)$ be a topological correspondence  from $E^0$ to $F^0$. Define 
$F^0_{sce}  := F^0 \setminus \overline{r(E^1)}$, 
$F^0_{fin}:=\{v \in F^0: \exists V~\text{neighborhood of}~v ~\text{such that}~ r^{-1}(V)~\text{is compact} \}$, $F^0_{rg}:=F^0_{fin} \setminus \overline{F^0_{sce}}$, and $F^0_{sg} :=F^0 \setminus F^0_{rg}$ (\cite[Section~1]{Ka2004}). Also, we let  $E^0_{rg}:=F^0_{rg}\cap E^0$ and $E^0_{sg}:=F^0_{sg}\cap E^0$.

\begin{dfn}(\cite[Definition 7.5]{CasK1})\label{def:boundary path space}  Let  $(\CB,\CL,\theta, \CI_\af)$ be a generalized Boolean dynamical system and $E=(E^1,d,r)$ be the associated  topological correspondence. Define the {\it boundary path space} of $E$ by 
$$\partial E :=E^{\infty} \sqcup \{\mu\in E^* : d(\mu)  \in E^0_{sg}\}.$$
For a subset $S \subset E^*$, we define
$$\CZ(S)=\{\mu \in \partial E:~\text{either} ~r(\mu) \in S, ~\text{or there exists}~  1 \leq i \leq |\mu| ~\text{such that}~ \mu_{[1,i]} \in 
S \}.$$
We equip $\partial E$ with the topology generated by the basic open sets $\CZ(U)\cap \CZ(K)^c$, where $U$ is an open set of $E^*$ and $K$ is a compact set of $E^*$.
\end{dfn}

We note that $\partial E$ is a locally compact Hausdorff space. 
 For $(e^{\af_k}_{\eta_k})_{k=1}^n\in E^*$, where $1\leq n$, we have $\af_1\cdots\af_n\in\CW^*$ by \cite[Lemma 7.9]{CasK1}.
 Thus, we can define  a map $\CP:\partial E\to\CW^{\leq\infty}$  given by $\CP((e^{\af_k}_{\eta_k})_{k=1}^N)=(\af_k)_{k=1}^N$, where $1\leq N\leq\infty$, and $ \CP(\eta)=\emptyset$ for $\eta \in E^0$.
For $\alpha=\af_1 \cdots \af_{|\af|} \in\mathcal{W}^{\geq 1}$ and   $A  \in \CI_\af$, we  define the cylinder set by 
\begin{align*}\CN(\af,A)&:=\{(e_{\eta_n}^{\beta_n}) \in \partial E: \CP((e_{\eta_n}^{\beta_n}))_{1,|\af|}=\af ~\text{and}~   A\in \eta_{|\alpha|}\}\\
&=\{(e_{\eta_n}^{\beta_n}) \in \partial E: \bt_1 \cdots \bt_{|\af|}=\af ~\text{and}~   A\in \eta_{|\alpha|}\}.
\end{align*}
Also, for  $A\in\CB$, we let 
\begin{align*}\CN(\emptyset,A):=\{\mu \in\partial E: A\in r(\mu)\} .
\end{align*}
Note that  $\CN(\af,\emptyset)=\emptyset$ for  $\af\in\CW^*$.
Then, we see that 
the sets $\CN(\af,A)$
are compact-open sets in $\partial E$ for each $\alpha\in\mathcal{W}^*$ and $A \in \CI_\af$ .

\begin{lem} (\cite[Lemma 3.8 and 3.18]{CasK2}) Let $\partial E$ be the boundary path space of E.
\begin{enumerate}
\item For $\alpha\in\mathcal{W}^*$ and $A \in \CI_\af$,  the sets $\CN(\af,A)$
are compact-open sets in $\partial E$.
\item The $C^*$-algebra $C_0(\partial E)$ is generated by the set 
$$\{1_{\CN(\af,A)}: \af \in \CW^* ~\text{and}~ A \in \CI_\af\},$$
where  we  denote by $1_{\CN(\af,A)}$ the characteristic function on $\CN(\af,A)$.
\end{enumerate}
\end{lem}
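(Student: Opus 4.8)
The plan is to handle the two statements separately: first I would show each $\CN(\af,A)$ is compact-open, and then deduce the generation statement by a Stone--Weierstrass argument once these sets are known to form a basis of compact-open sets. For the openness half of (1), I would realise $\CN(\af,A)$ in the form $\CZ(U)$. Writing $\af=\af_1\cdots\af_n\in\CW^{\geq 1}$, set $V(\af,A):=\{e^{\af_1}_{\zeta_1}\cdots e^{\af_n}_{\zeta_n}\in E^n: A\in\zeta_n\}$, the length-$n$ finite paths whose labels spell $\af$ and whose terminal ultrafilter contains $A$. Using the homeomorphism $E^1\cong\bigsqcup_{\af\in\CL}X_\af$ and the openness of $Z(\af_n,A)$ in $X_{\af_n}=\widehat{\CI_{\af_n}}$, one checks that $V(\af,A)$ is open in $E^n$, hence in $E^*$. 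Since every element of $V(\af,A)$ has length $n\geq 1$, the alternative $r(\mu)\in V(\af,A)$ in the definition of $\CZ$ cannot occur, so $\CZ(V(\af,A))=\{\mu\in\partial E:\mu_{[1,n]}\in V(\af,A)\}$, which is precisely $\CN(\af,A)$; as $\CZ(V(\af,A))=\CZ(V(\af,A))\cap\CZ(\emptyset)^{c}$ is a basic open set, $\CN(\af,A)$ is open, and the case $\af=\emptyset$ follows from continuity of $r$.

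The main obstacle is compactness. A useful structural observation is that, once the labels are fixed, a finite path $e^{\af_1}_{\zeta_1}\cdots e^{\af_n}_{\zeta_n}$ is completely determined by its terminal ultrafilter $\zeta_n$, because the matching relations together with the mutually inverse maps $h_{[\af_i]\emptyset}$ and $g_{(\af_i)\emptyset}$ recover each $\zeta_i$ from $\zeta_{i+1}$. Hence $V(\af,A)$ is homeomorphic, via $\mu\mapsto\zeta_n$, to a compact-open subset of $Z(\af_n,A)\subseteq X_{\af_n}$, and is in particular compact-open in $E^*$. The delicate point is then to pass from compactness at the finite-path level to compactness of $\CZ(V(\af,A))$ in $\partial E$: given a cover by basic open sets $\CZ(U_j)\cap\CZ(K_j)^{c}$, one must extract a finite subcover while controlling the infinite tails of boundary paths, using the fact, built into the definition of $\partial E$, that a finite boundary path can only terminate at a singular vertex. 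This is the step I expect to demand the most care and to rely essentially on the regular/singular dichotomy for vertices.

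For (2), part (1) already shows every $1_{\CN(\af,A)}$ is the characteristic function of a compact-open set and hence lies in $C_0(\partial E)$. Let $B_0$ denote the $*$-subalgebra they generate. The products are governed by an intersection formula for cylinder sets which I would establish by the same case analysis used in Remark \ref{computations of generators}: when $\af$ and $\bt$ are incomparable $\CN(\af,A)\cap\CN(\bt,B)=\emptyset$; when $\af=\bt$ it equals $\CN(\af,A\cap B)$; and when $\bt=\af\bt'$ it equals $\CN(\bt,B\cap\theta_{\bt'}(A))$, the translation of the intermediate constraint $A\in\eta_{|\af|}$ being exactly the determinism of intermediate ultrafilters noted above. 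Consequently $B_0=\operatorname{span}\{1_{\CN(\af,A)}\}$.

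It then remains to verify that $\{\CN(\af,A)\}$ is a basis for the topology of $\partial E$. Here I would show that each basic open set $\CZ(U)\cap\CZ(K)^{c}$ is a union of cylinders $\CN(\af,A)$: writing $U$ as a union of products of the sets $Z^1(\gm,B)$ and using that, after fixing labels, every such product constraint reduces to a compact-open constraint on the terminal ultrafilter $\zeta_n$ (a union of $Z(\gm_n,A)$'s), while the compact obstruction $K$ is absorbed by passing to complements within this Boolean algebra of compact-open sets. Granting the basis property, the family $\{\CN(\af,A)\}$ separates the points of the Hausdorff space $\partial E$ and vanishes at no point, so the Stone--Weierstrass theorem for $C_0$ of a locally compact Hausdorff space yields $\overline{B_0}=C_0(\partial E)$, which is the assertion of (2).
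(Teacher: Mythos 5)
This lemma is not proved in the paper at all: it is imported verbatim from \cite[Lemmas 3.8 and 3.18]{CasK2}, so there is no internal proof to compare against and your proposal has to stand on its own. It does not, because the substantive half of part (1) -- compactness -- is never actually proved. Your openness argument is fine: $\CN(\af,A)=\CZ(V(\af,A))$ with $V(\af,A)$ open in $E^{|\af|}$, and $\CZ(V(\af,A))=\CZ(V(\af,A))\cap\CZ(\emptyset)^c$ is basic open (the case $\af=\emptyset$ is even easier, since $\CN(\emptyset,A)=\CZ(Z(A))$ with $Z(A)\subseteq E^0$ open). Your structural observation that a finite path with fixed label word is determined by its terminal ultrafilter is also correct, and it does show that $V(\af,A)$ is homeomorphic to the compact set $Z(\af_{|\af|},A)$, hence compact in $E^*$. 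But compactness of $V(\af,A)$ inside $E^*$ does not transfer to compactness of $\CZ(V(\af,A))$ inside $\partial E$: the latter set consists of all boundary paths \emph{extending} elements of $V(\af,A)$, and controlling its covers requires a genuine K\"onig's-lemma/Tychonoff-type argument about infinite tails and about finite boundary paths terminating at singular vertices. You explicitly flag this as ``the delicate point'' and describe what a proof would have to accomplish, but you never carry it out. Since this is precisely the nontrivial content of (1), and since (2) needs $1_{\CN(\af,A)}\in C_0(\partial E)$ (i.e.\ compactness) before Stone--Weierstrass can even be invoked, the gap propagates to the whole lemma.

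Two secondary points. First, your claim that $V(\af,A)$ maps onto merely ``a compact-open subset'' of $Z(\af_{|\af|},A)$ undersells what is true and what your argument needs: using $A\in\CI_\af$ one checks the recursion $\zeta_i=g_{(\af_i)\emptyset}(f_{\emptyset[\af_{i+1}]}(\zeta_{i+1}))$ never degenerates, so the map is onto all of $Z(\af_{|\af|},A)$; without verifying this (or at least compactness of the image) even your step 1 is incomplete. Second, in part (2) the detour through ``the cylinders form a basis of the topology'' is both unproved (the absorption of the compact obstruction $K$ ``by passing to complements'' is only a phrase) and unnecessary: Stone--Weierstrass for $C_0$ of a locally compact Hausdorff space requires only that the self-adjoint algebra $\operatorname{span}\{1_{\CN(\af,A)}\}$ separates points and vanishes nowhere, and both can be checked directly -- if $\CP(\mu)\neq\CP(\nu)$ take a cylinder along the first label where they differ; if the words agree but $\eta_i\neq\xi_i$, choose disjoint sets in the two ultrafilters and intersect them with elements of $\eta_i\cap\CI_{\CP(\mu)_{1,i}}$ and $\xi_i\cap\CI_{\CP(\mu)_{1,i}}$ respectively. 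Your intersection formula for cylinders, and hence the identification of the generated $*$-algebra with the linear span, is correct; so (2) would follow immediately once (1) is genuinely established.
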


\subsection{A partial action on $\partial E$  and the partial crossed product  $C_0(\partial E) \rtimes_{\hat{\varphi}} \mathbb{F}$}
Let $\mathbb{F}$ be the free group generated by $\CL$. We identify the identity of $\mathbb{F}$ with $\emptyset$ and  $\CW^*$ is considered as a subset of $\mathbb{F}$.

Define
\begin{itemize}
\item $U_\emptyset:=\partial E$;
\item for $\af, \bt \in \CW^*$  such that $\CI_\af \cap \CI_\bt \neq \emptyset$, let
\[ U_{\af\bt^{-1}}  :=\{(e^{\mu_k}_{\eta_k})_{k = 1}^{N}\in\partial E :\mu_1 \cdots \mu_{|\af|}=\af \text{ and }\eta_{|\af|}\cap\CI_{\bt}\neq\emptyset \};\]
here, to easy notations, we put
\begin{align*} 
U_\emptyset &:= U_{\emptyset\emptyset};\\
U_{\af}&:=U_{\af\emptyset}=\{(e^{\mu_k}_{\eta_k})_{k=1}^N\in\partial E : \gm_1 \cdots \gm_{|\af|}=\af \}; \\
U_{\bt^{-1}}&:=U_{\emptyset \bt^{-1}} =\{\mu \in\partial E : r(\mu) \cap \CI_\bt \neq \emptyset\};
\end{align*}
 \item for all the other elements $\gm \in \mathbb{F}$, let $U_\gm =\emptyset$. 
   \end{itemize}
We define

\begin{itemize}
\item $\varphi_\emptyset: U_\emptyset \to U_\emptyset$ as the identity map;
\item for $\af, \bt \in \CW^*$ such that $\alpha\beta^{-1}$ is in reduced form in $\mathbb{F}$ and $U_{\bt\af^{-1}}\neq\emptyset$, we define a map $$\varphi_{\af\bt^{-1}}:U_{\bt\af^{-1}}\to U_{\af\bt^{-1}}$$
by 
$$\varphi_{\af\bt^{-1}}(\mu):=e^{\af_1}_{\eta_1}\ldots e^{\af_n}_{\eta_{n}}\mu_{[|\bt|+1,|\mu|]} $$ for $\mu =(e^{\mu_i}_{\xi_i})_{i = 1}^N\in U_{\bt\af^{-1}}$,  where  
\begin{align*} \eta_n &:=g_{(\af_n)\emptyset}(h_{[\bt_{|\bt|}]\emptyset}(\xi_{|\bt|})) , \\
\eta_{n-1}&:=g_{(\af_{n-1})\emptyset}(f_{\emptyset[\af_n]}(\eta_n)), \\
\eta_{n-2}&:=g_{(\af_{n-2})\emptyset}(f_{\emptyset[\af_{n-1}]}(\eta_{n-1})), \\
& \vdots \\
\eta_2 &:= g_{(\af_2)\emptyset}(f_{\emptyset[\af_3]}(\eta_3)),\\
\eta_1&:= g_{(\af_1)\emptyset}(f_{\emptyset[\af_2]}(\eta_2));
\end{align*}
\item for all the other elements $\gm \in \mathbb{F}$, define $\varphi_\gm: U_{\gm^{-1}} \to U_\gm$ as the empty map. 
\end{itemize}
Then,  $\Phi:=(\{U_t\}_{t \in \mathbb{F}}, \{\varphi_t\}_{t \in \mathbb{F}})$ is a semi-saturated orthogonal  partial action of  $\mathbb{F}$ on $\partial E$ (\cite[Proposition 3.6]{CasK1}).
For $t \in \mathbb{F}$, define $\hat{\varphi}_t: C_0(U_{t^{-1}}) \to C_0(U_t)$ by $$\hat{\varphi}_t(f)=f \circ \varphi_{t^{-1}}.$$
Then, $(\{C_0(U_t)\}_{t \in \mathbb{F}}, \{\hat{\varphi}_t\}_{t \in \mathbb{F}} )$ is a $C^*$-algebraic partial dynamical system. Thus, we have  the partial crossed product 
$$C_0(\partial E) \rtimes_{\hat{\varphi}} \mathbb{F} = \overline{\operatorname{span}} \Big\{ \sum_{t \in \mathbb{F}}f_t\dt_t: f_t \in C_0(U_t) ~\text{and}~ f_t \neq 0  ~\text{for finitely many}~ t \in \mathbb{F} \Big\},$$
where   the 
multiplication and involution   in $C_0(\partial E) \rtimes_{\hat{\varphi}} \mathbb{F}$ are given by 
\begin{align*} &(a\dt_s)(b \dt_t)=\hat{\varphi}_s(\hat{\varphi}_{s^{-1}}(a)b)\dt_{st}, ~\text{and}~ \\
&(a\dt_s)^*=\hat{\varphi}_{s^{-1}}(a)\dt_{s^{-1}},
\end{align*}
and the closure is with respect to the universal norm.  We can think that $\dt_t$ serves a place holder to indicate the coordinate of $f_t\in C_0(U_t)$.

We let 
$C^*(\{1_{\CN(\emptyset,A)}\dt_\emptyset, 1_{\CN(\af,B)}\dt_\af\}) \subseteq C_0(\partial E) \rtimes_{\hat{\varphi}} \mathbb{F} $ denote the $C^*$-subalgebra generated by $\{1_{\CN(\emptyset,A)}\dt_\emptyset, 1_{\CN(\af,B)}\dt_\af: A \in \CB ~\text{and}~\af \in \CL, B \in \CI_\af \}$. Then we have by \cite[Proposition 3.21]{CasK1} that 
$C_0(\partial E) \rtimes_{\hat{\varphi}} \mathbb{F} = C^*(\{1_{\CN(\emptyset,A)}\dt_\emptyset, 1_{\CN(\af,B)}\dt_\af\})$
and there is an isomorphism $\psi: C^*(\CB,\CL,\theta,\CI_\af) \to C_0(\partial E) \rtimes_{\hat{\varphi}} \mathbb{F}$
given by $$\psi(p_A)=1_{\CN(\emptyset,A)}\dt_\emptyset ~\text{and}~\psi(s_{\af,B})=1_{\CN(\af,B)}\dt_\af$$ for $A \in \CB$, $\af \in \CL$ and $B \in \CI_\af$  (see  \cite[Corollary 4.5 and Remark 4.6]{CasK1}).

\subsection{Groupoids} 
Let $\CG$ be a locally compact Hausdorff groupoid. We say that $\CG$ is {\it \'etale} if the range and source maps $r,s: \CG \to \CG^{(0)}$ are local homeomorphism. 
A {\it bisection} in $\CG$ is a set $U \subseteq \CG$ such that $r|_U$ and $s|_U$ are homeomorphisms on $U$. 
An \'etale groupoid is {\it ample} if it has a basis of compact open bisections. 
It is known in \cite{Exel2010} that $\CG$ is ample if and only if its unit space $\CG^{(0)}$ is totally disconnected. 

For a unit  $u \in \CG^{(0)}$, we let 
$\CG^u:=r^{-1}(u)$, $\CG_u:=s^{-1}(u)$ and $\CG^u_u:=r^{-1}(u) \cap s^{-1}(u)$.
The {\it isotropy group bundle} of $\CG$ is $$\operatorname{Iso}(\CG):=\bigcup_{u \in \CG^{(0)}}\CG_u^u=\{\gm \in \CG: s(\gm)=r(\gm)\}.$$
Note that $\operatorname{Iso}(\CG)$ is closed in $\CG$. We write $\operatorname{Iso}(\CG)^{\circ}$ for the interior of $\operatorname{Iso}(\CG)$ in $\CG$. If $\CG$ is \'etale, then $\CG^{(0)} \subseteq \operatorname{Iso}(\CG)^{\circ}$  and $\operatorname{Iso}(\CG)^{\circ}$ is an  \'etale subgroupoid of $\CG$.

A unit $u \in\CG^{(0)}$ is said to have {\it trivial isotropy} if $\CG_u^u=\{u\}$. 
We say that $\CG$ is {\it topologically principal} if the set of units
 with trivial isotropy is dense in $\CG^{(0)}$, and that $\CG$ is {\it effective} if $\operatorname{Iso}(\CG)^{\circ}=\CG^{(0)}$.  

For the definition of a groupoid $C^*$-algebra we refer the reader to \cite{S}.


\subsection{A groupoid associated to $\Phi=(\{U_t\}_{t \in \mathbb{F}}, \{\varphi_t\}_{t \in \mathbb{F}})$}

We let
$$\mathbb{F} \ltimes_\varphi \partial E=\{(t, \mu) \in \mathbb{F} \times \partial E : \mu \in  U_t\}$$
denote the transformation groupoid associated with the partial action $ \Phi=(\{U_t\}_{t \in \mathbb{F}}, \{\varphi_t\}_{t \in \mathbb{F}})$, where the composition and the inversion are given by 
$$(t, \mu)(s, \nu)=(ts, \mu) ~\text{if}~\varphi_{t^{-1}}(\mu)=\nu,$$
$$(t, \mu)^{-1}=(t^{-1}, \varphi_{t^{-1}}(\mu))$$
and the range map and the source map are given by
$$r(t,\mu)=\mu ~\text{and }~s(t, \mu)=\varphi_{t^{-1}}(\mu),$$ where we identify 
 $(\mathbb{F} \ltimes_\varphi \partial E)^{(0)} $  with $\partial E$. Then, $\mathbb{F} \ltimes_\varphi \partial E$ is a totally disconnected ample Hausdorff groupoid (see \cite[Lemma 4.3 and Theorem 4.4]{CasK2} and \cite[Corollary 5.10]{CasK1}).  
 Since there is an isomorphism $\rho: C^*(\mathbb{F} \ltimes_\varphi \partial E) \to C_0(\partial E) \rtimes_{\hat{\varphi}} \mathbb{F}$ given by 
 $$\rho(f)=\sum_{t \in \mathbb{F}} f_t\dt_t$$
for $f \in C_c(\mathbb{F} \ltimes_\varphi \partial E)$, where $f_t: U_t \to \mathbb{C}$ is given by $f_t(x)=f(t,x)$ (see \cite[Proposition 3.1]{A}), one can deduce that there is an isomorphism  $\kp : C^*(\CB,\CL,\theta,\CI_\af)  \rightarrow C^*(\mathbb{F} \ltimes_\varphi \partial E)$  given on the generators of $C^*(\CB, \CL,\theta, \CI_\af)$ by 
\begin{align*} 
\kp(p_A)&=1_{\{\emptyset\} \times \CN(\emptyset,A)} \\
\kp(s_{\af,B})&=1_{\{\af\}\times \CN(\af,B)}.
\end{align*}
It is then straightforward to check that $$\kp(s_{\af,A}s_{\bt,A}^*)=1_{ \{\af\bt^{-1}\} \times \CN(\af,A)}$$ for $\af,\bt \in \CL^*$ and $A \in \CI_\af \cap \CI_\bt$.

\begin{remark} Let $\mathbb{F} \ltimes_\varphi \partial E$ be the transformation groupoid defined above. If $\CB$ and $\CL$ are countable, then $\mathbb{F} \ltimes_\varphi \partial E$ is second countable. 
\end{remark}

\section{The diagonal subalgebra of $C^*(\CB, \CL,\theta, \CI_\af)$}\label{The diagonal subalgebra}

Let $(\CB, \CL, \theta, \CI_\af)$ be a generalized Boolean dynamical system and $D$ be the diagonal subalgebra of $C^*(\CB, \CL,\theta, \CI_\af):=C^*(p_A, s_{\af,B})$. 
In this section,  we prove that the spectrum $\widehat{D}$ of $D$ is homeomorphic to $\partial E$. Precisely, we shall construct a homeomorphism $\Phi$ from $\partial E$ to $\widehat{D}$ such that 
$$\Phi(\mu)(s_{\af,A}s_{\af,A}^*)= 
\left\{
   \begin{array}{ll}
    1  & \hbox{if\ }  \mu \in \CN(\af,A),   \\
0 & \hbox{otherwise}\\
        \end{array}
\right. $$
 for $\mu \in \partial E$. 
 The non-trivial part is to define a well-defined continuous linear map $\Phi(\mu)$ satisfying the above equation  on $\operatorname{span}\{s_{\af,A}s_{\af,A}^*: \af \in \CL^* ~\text{and}~ A \in \CI_\af \}$. To achieve this, we first rewrite a finite linear combination of elements of the form $s_{\af,A}s_{\af,A}^*$ as a finite linear combination of orthogonal projection.

Before we begin, define 
$$E(S):=\{(\af, A, \af): \af \in \CL^* ~\text{and}~ \emptyset \neq A \in \CI_\af\}\cup \{0\}.$$
This represents the semilattice of idempotents of an inverse semigroup $S_{(\CB, \CL, \theta, \CI_\af)}:=\{(\af, A, \bt): \af,\bt \in \CL^* ~\text{and}~  A \in \CI_\af \cap \CI_\bt ~\text{with}~ A \neq \emptyset \} \cup \{0\}. $
 The natural order in the semilattice $E(S)$ is given as follows: for $\af,\bt \in \CL^*$, $A \in \CI_\af$ and $B \in \CI_\bt$, we have 
$$(\af, A, \af)\leq (\bt, B, \bt) ~\text{if and only if}~\af=\bt\af' ~\text{and}~A \subseteq\theta_{\af'}(B)$$ (\cite[Lemma 3.1]{CasK1}).

  The following lemma is inspired by \cite[Lemma 3.3]{SW} and \cite[Lemma 6.2]{BCM2}.

\begin{lem}\label{mo projections} Let $F \in E(S)\setminus \{0\}$ be a finite set such that for all $u,v \in F$, $uv=0$, $u \leq v$ or $v \leq u$. For $u=(\af,A,\af) \in F$, define
$$q^F_u:= s_{\af,A}s_{\af,A}^* \prod_{(\bt,B,\bt) \in F,\ (\bt,B,\bt) < (\af,A, \af)} (s_{\af,A}s_{\af,A}^* - s_{\bt,B}s_{\bt,B}^*).$$
Then, the  $q_u^F$  are mutually orthogonal projections in $\operatorname{span}\{s_{\bt,B}s_{\bt,B}^*: (\bt,B,\bt) \in F \}$. Also, for each $(\af,A,\af) \in F$,  we have 
\begin{align}\label{sum of orthogonal projections} s_{\af,A}s_{\af,A}^*=\sum_{u \in F, ~~ u \leq (\af,A,\af)} q_u^F.
\end{align}
\end{lem}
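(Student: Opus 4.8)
The plan is to push the whole statement into a finite–dimensional commutative $C^*$-algebra, where it becomes a fact about a laminar family of sets. First I would set $e_u:=s_{\af,A}s_{\af,A}^*$ for $u=(\af,A,\af)$; by Remark \ref{computations of generators}(4) these commute and are projections. The first task is to record what the hypothesis on $F$ says about them. If $u=(\af,A,\af)\le v=(\bt,B,\bt)$, then $\af=\bt\af'$ with $A\subseteq\theta_{\af'}(B)$, and Remark \ref{computations of generators}(3) gives $e_ue_v=s_{\af,A\cap\theta_{\af'}(B)}s_{\af,A\cap\theta_{\af'}(B)}^*=e_u$ (since $A\cap\theta_{\af'}(B)=A$); symmetrically $v\le u$ gives $e_ue_v=e_v$, and $uv=0$ gives $e_ue_v=0$. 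Thus the hypothesis says exactly that for all $u,v\in F$ one has $e_ue_v\in\{0,e_u,e_v\}$.

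Next I would note that this trichotomy makes $A_F:=\operatorname{span}\{e_w:w\in F\}$ closed under multiplication (and trivially under adjoints), so $A_F$ is a finite–dimensional commutative $C^*$-subalgebra of $C^*(\CB,\CL,\theta,\CI_\af)$. Since each $q^F_u$ is built from elements of $A_F$ by products and differences, it lies in $A_F$; this already establishes the membership assertion of the lemma. By Gelfand duality $A_F\cong C(\Omega)$ for a finite discrete space $\Omega$, under which each $e_w$ corresponds to the indicator $1_{S_w}$ of a subset $S_w\subseteq\Omega$. Here $e_u\le e_v\iff S_u\subseteq S_v$ and $e_ue_v=0\iff S_u\cap S_v=\emptyset$, so the hypothesis becomes the statement that $\{S_w:w\in F\}$ is \emph{laminar}: any two members are nested or disjoint.

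It then remains to verify two elementary facts about this laminar family, noting that $q^F_u$ corresponds to $1_{T_u}$ with $T_u:=S_u\setminus\bigcup_{v\in F,\,v<u}S_v$. The key observation is that for each point $x\in\Omega$ the set $F_x:=\{w\in F:x\in S_w\}$ is totally ordered, since any two of its members meet at $x$ and are therefore nested. Orthogonality is then immediate: if $x\in T_u\cap T_{u'}$ with $u\ne u'$, then $u,u'\in F_x$ are comparable, say $u<u'$, and then $x\in S_u\subseteq\bigcup_{v\in F,\,v<u'}S_v$ contradicts $x\in T_{u'}$. For \eqref{sum of orthogonal projections} with $w=(\af,A,\af)$, every $T_u$ with $u\le w$ satisfies $T_u\subseteq S_u\subseteq S_w$, giving one inclusion; conversely, for $x\in S_w$ the finite nonempty chain $F_x$ has a least element $u_0$, and $w\in F_x$ forces $u_0\le w$, while minimality gives $x\in S_{u_0}\setminus\bigcup_{v<u_0}S_v=T_{u_0}$. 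Pulling these facts back through $A_F\cong C(\Omega)$ yields both assertions.

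The genuine content lies in the reduction rather than the combinatorics. The step requiring care is checking that $\operatorname{span}\{e_w:w\in F\}$ is multiplicatively closed — this is precisely where the trichotomy hypothesis is used — together with the correct transport of the natural order on $E(S)$ and the relation $uv=0$ to the projection order and orthogonality via Remark \ref{computations of generators}(3). Once the laminar picture is in place, the mutual orthogonality of the $q^F_u$ and the summation formula \eqref{sum of orthogonal projections} are routine, and no analytic subtlety arises because $A_F$ is finite–dimensional.
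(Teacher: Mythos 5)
Your proof is correct, but it follows a genuinely different route from the paper's. The paper works entirely algebraically inside $C^*(\CB,\CL,\theta,\CI_\af)$: it notes that each $q^F_u$ is a product of commuting projections, obtains mutual orthogonality from the observation that $s_{\af,A}s_{\af,A}^*-s_{\bt,B}s_{\bt,B}^*$ is a factor of $q^F_u$ while $s_{\bt,B}s_{\bt,B}^*$ is a factor of $q^F_v$ whenever $v=(\bt,B,\bt)<(\af,A,\af)=u$, and proves the summation formula (\ref{sum of orthogonal projections}) by induction on $|F|$: it removes a minimal element $(\gm,C,\gm)$, relates $q^F_u$ to $q^{G}_u$ for $G=F\setminus\{(\gm,C,\gm)\}$, and invokes the inductive hypothesis. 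You instead push everything through Gelfand duality: the trichotomy hypothesis makes $\operatorname{span}\{s_{\bt,B}s_{\bt,B}^*:(\bt,B,\bt)\in F\}$ a finite-dimensional commutative $*$-algebra, so the lemma becomes a pointwise statement about indicator functions of a laminar family, verified with no induction at all. Both arguments are complete; yours is shorter and more conceptual, while the paper's is purely algebraic and self-contained. The one step in your argument that genuinely requires care is the one you handle in your ``key observation'': since $T_u$ is cut out using the order of $E(S)$ rather than the projection order, mere laminarity of the sets $S_w$ would not suffice --- you need that two members of $F$ whose sets intersect are comparable in $E(S)$ itself. This does follow, as you argue, from the trichotomy hypothesis together with the implication $uv=0\Rightarrow e_ue_v=0$; note that the converse implications (such as $e_u\le e_v\Rightarrow u\le v$), which need not hold since $u\mapsto e_u$ is not assumed to be an order embedding, are never used, so your phrase ``the hypothesis says exactly'' should be read as a one-directional translation.
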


\begin{proof} Note that for each $u=(\af,A,\af) \in F$, $\{s_{\bt,B}s_{\bt,B}^*: (\bt, B, \bt) \in F ~\text{and}~ (\bt, B, \bt) < (\af,A,\af)\}$ is a finite set of commuting subprojections of $s_{\af,A}s_{\af,A}^*$. So, $q_u^F$ is a product of commuting projections, and hence, it is a non-zero projection in $\operatorname{span}\{s_{\bt,B}s_{\bt,B}^*: (\bt,B,\bt) \in F \}$.
 Let $u=(\af,A,\af), v=(\bt,B,\bt) \in F$ be such that $u \neq v$. 
If $uv=0$, then $s_{\af,A}s_{\af,A}^* s_{\bt,B}s_{\bt,B}^*=0$, so we have $q^F_u q^F_v=0$.
If $v < u$, then $s_{\af,A}s_{\af,A}^* -s_{\bt,B}s_{\bt,B}^* $ is  a factor of $q_u^F$ and $s_{\bt,B}s_{\bt,B}^*$ is a factor of $q_v^F$. Since 
$s_{\bt,B}s_{\bt,B}^* < s_{\af,A}s_{\af,A}^*$, we have $q^F_u q^F_v=0$.
By the same argument, we have $q^F_u q^F_v=0$ if $u < v$.

We prove (\ref{sum of orthogonal projections}) by induction on $|F|$. 
If $|F|=1$, the result is trivial. 
Let $n >1$ and suppose that the result holds true whenever $|F| < n$. 
Let $F \subseteq E(S) \setminus \{0\}$  with  $|F| = n$ satisfy the hypothesis of the lemma. Choose a minimal element $(\gm,C,\gm) \in F$ and define $G = F \setminus \{(\gm,C, \gm)\}$. 
Since $(\gm,C,\gm)$ is minimal in $F$ then
 $$\sum_{u \in F,~u \leq (\gm,C,\gm)} q_u^F=q_{ (\gm,C,\gm)}^F=s_{\gm,C}s_{\gm,C}^*.$$
 So, (\ref{sum of orthogonal projections}) holds for $(\gm,C,\gm)$. 
To show that (\ref{sum of orthogonal projections}) is true for each $(\af,A,\af) \in G$, we first claim that, for a given $u \in G$, 
\begin{eqnarray} \label{F into G}
q^F_{u} =\left\{           
  \begin{array}{ll}
    q^G_{u},  & \hbox{if\ }  u(\gm,C,\gm)=0,   \\
q^G_{u} -q^G_{u}s_{\gm,C}s_{\gm,C}^* & \hbox{if\ } (\gm,C,\gm) < u. 
   \end{array}
   \right. 
\end{eqnarray} 
If $u(\gm,C,\gm)=0$, then we clearly have that $q^F_{u}=q^G_{u}$. 
Let $u=(\bt,B,\bt)$ and assume that $(\gm,C,\gm) < u$.
Then, we have that
\begin{align*} q^F_{u} &=s_{\bt,B}s_{\bt,B}^*(s_{\bt,B}s_{\bt,B}^*-s_{\gm,C}s_{\gm,C}^*) \prod_{(\dt,D,\dt) \in G,~  (\dt,D,\dt) < u} (s_{\bt,B}s_{\bt,B}^* - s_{\dt,D}s_{\dt, D}^*) \\
&=s_{\bt,B}s_{\bt,B}^*(s_{\bt,B}s_{\bt,B}^*-s_{\gm,C}s_{\gm,C}^*) q^G_{u} \\ 
&=q^G_{u}  -q^G_{u} s_{\gm,C}s_{\gm,C}^*.
\end{align*}
Now, Let  $(\af,A,\af) \in G$. 
If $(\af,A,\af)(\gm,C,\gm)=0$, then by the equation (\ref{F into G}) and the induction hypothesis, we have 
$$\sum_{u \in F, ~u \leq (\af,A,\af)}q^F_u=\sum_{u \in G,~ u \leq (\af,A,\af)}q^G_u=s_{\af,A}s_{\af,A}^*.$$
If   $(\gm,C,\gm) < (\af,A,\af)$, then 
\begin{align*} \sum_{u \in F, ~ u \leq (\af,A,\af)}q^F_u
 &=q^F_{(\gm,C,\gm)} + \sum_{u \in G,~u \leq (\af,A,\af)}q^F_u \\
 &=s_{\gm,C}s_{\gm,C}^* +  \sum_{u \in G,~ u \leq (\af,A,\af)}q^F_u \\
&  = s_{\gm,C}s_{\gm,C}^* + \sum_{u \in G,~ u \leq (\af,A,\af)} (q_u^G - q_u^G s_{\gm,C}s_{\gm,C}^* ) \\
&=s_{\gm,C}s_{\gm,C}^* + s_{\af,A}s_{\af,A}^* - s_{\af,A}s_{\af,A}^* s_{\gm,C}s_{\gm,C}^* \\
&= s_{\af,A}s_{\af,A}^*,
\end{align*}
where the third equality follows from the equation (\ref{F into G}) since $q_u^F= q_u^G - q_u^G s_{\gm,C}s_{\gm,C}^*$ even when $u(\gm,C,\gm)=0$ and the last equality follows from the induction hypothesis. Thus, we are done. 
\end{proof}

\begin{lem}\label{disjoint}(cf.\cite[Lemma 6.3]{BCM2}) Let $F \subseteq E(S) \setminus \{0\}$ be a finite set. Then there exists $F' \subseteq E(S) \setminus \{0\}$ such that $F'$ satisfies the hypothesis of Lemma \ref{mo projections} and the following conditions:
\begin{enumerate}
\item for all $(\af,A,\af) \in F$, there exist $(\af,A_1,\af), \cdots, (\af, A_n, \af) \in F'$ such that $A$ is the  union of $A_1, \cdots, A_n$;
\item the word that appear in elements of $F'$ are the same as those that appear in elements of $F$;
\item if $(\af,A,\af), (\af,B,\af) \in F'$ and $A \neq B$, then $A \cap B =\emptyset$.
\end{enumerate}
\end{lem}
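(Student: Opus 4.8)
The plan is to build $F'$ by refining $F$ one word at a time, replacing the sets attached to each word by the atoms of a suitable finite Boolean algebra, and arranging that the atoms attached to a longer word are compatible with the (already refined) atoms attached to each of its prefixes. Let $W \subseteq \CL^*$ be the finite set of words $\af$ such that $(\af,A,\af)\in F$ for some $A$; by condition (2) these are exactly the words I am permitted to use. For $\af \in W$ put $U_\af := \bigcup\{A : (\af,A,\af)\in F\}$, a finite union, so $U_\af \in \CI_\af$. Recalling the order $(\af,A,\af)\leq(\bt,B,\bt)$ iff $\af=\bt\af'$ and $A\subseteq\theta_{\af'}(B)$, together with Remark \ref{computations of generators}(3), which gives that the product of the two projections equals $s_{\af,A\cap\theta_{\af'}(B)}s_{\af,A\cap\theta_{\af'}(B)}^*$ when $\af=\bt\af'$ and $0$ when the words are incomparable, I note that two triples can fail the hypothesis of Lemma \ref{mo projections} only when one word is a prefix of the other; incomparable words are automatically orthogonal.

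For the construction I would enumerate $W$ with non-decreasing word length and process the words in this order, producing for each $\af\in W$ a finite family $\mathcal{D}_\af$ of pairwise disjoint nonempty elements of $\CI_\af$ with $\bigcup\mathcal{D}_\af=U_\af$ and such that each $A$ with $(\af,A,\af)\in F$ is a union of members of $\mathcal{D}_\af$. When $\af$ is reached, every proper prefix $\bt\in W$ has already been treated, so $\mathcal{D}_\bt$ is available. I would then work inside the principal ideal $\downarrow U_\af$, which is a genuine Boolean algebra with top element $U_\af$ precisely because $\CB$ is relatively complemented and distributive, and let $\CB_\af$ be the finite Boolean subalgebra of $\downarrow U_\af$ generated by the sets $\{A:(\af,A,\af)\in F\}$ together with the traces $\{A\cap\theta_{\af'}(D') : \af=\bt\af',\ \bt\in W \text{ a proper prefix},\ D'\in\mathcal{D}_\bt,\ (\af,A,\af)\in F\}$; each such trace lies below some $A$ and hence in $\CI_\af$. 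Taking $\mathcal{D}_\af$ to be the (nonempty) atoms of $\CB_\af$ and setting $F':=\{(\af,D,\af):\af\in W,\ D\in\mathcal{D}_\af\}$ completes the construction.

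The verification of conditions (1)–(3) should be immediate: each generator $A$ is the join of the atoms below it, all of which lie in $\mathcal{D}_\af$, giving (1); the words used are exactly $W$, giving (2); and distinct atoms at a common word are disjoint, giving (3). For the hypothesis of Lemma \ref{mo projections}, I take $u=(\af,D,\af)$ and $v=(\bt,D',\bt)$ in $F'$. If the words are incomparable then $uv=0$; if $\af=\bt$ then $D,D'$ are atoms, so $u=v$ or $uv=0$. The one substantive case is $\af=\bt\af'$ with $\af'\neq\emptyset$: here $D$ is an atom contained in a unique generator $A$, and $\theta_{\af'}(D')\cap A\in\CB_\af$ is a union of atoms, so either $D\subseteq\theta_{\af'}(D')$, whence $u\leq v$, or $D\cap\theta_{\af'}(D')=\emptyset$, whence $uv=0$ by the meet formula; the case $\bt=\af\bt'$ is symmetric.

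The hard part will be the cross-level compatibility: an atom $D$ at word $\af$ must be either entirely below or entirely disjoint from every pulled-back atom $\theta_{\af'}(D')$ arising from a prefix. This is exactly why the traces $A\cap\theta_{\af'}(D')$ have to be included among the generators of $\CB_\af$, and why the words must be processed in order of increasing length, so that the refined families $\mathcal{D}_\bt$ at all prefixes are already in hand when $\af$ is treated. The remaining care is bookkeeping: confirming that every set fed into $\CB_\af$ genuinely lies in $\CI_\af$ (they do, being subsets of some $A\in\CI_\af$), that the atoms are nonempty, and that $F'\subseteq E(S)\setminus\{0\}$.
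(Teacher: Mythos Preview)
Your proposal is correct and follows essentially the same strategy as the paper: process the words in $W$ by increasing length and, at each word $\af$, replace the attached sets by the atoms of the finite Boolean subalgebra generated by the original sets together with the $\theta$-pullbacks of the already-refined pieces coming from proper prefixes. The paper phrases this as induction on $m=\max\{|\af|:(\af,A,\af)\in F\}$, handling all words of length $m$ simultaneously via the sets $\CD_\af$ and $\CC_\af$, whereas you iterate word by word; since words of equal length are never prefix-comparable this makes no difference. One small slip: in your cross-level check you write that the atom $D$ lies in ``a unique generator $A$'', but you only need (and only have) \emph{some} $A$ with $(\af,A,\af)\in F$ and $D\subseteq A$; the argument goes through unchanged with that correction.
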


\begin{proof} For a given finite set $F \subseteq E(S) \setminus \{0\}$, define $m=\max \{|\af|: (\af,A,\af) \in F\}$. We proceed by induction on $m$. If $m=0$, then $F=\{(\emptyset, B_1, \emptyset), \cdots, (\emptyset, B_l, \emptyset)\}$. Define 
$$\CC=\Big\{ \bigcap_{i \in I_1} B_i \setminus \bigcup_{j \in I_2} B_j : I_1 \cup I_2 =\{1, \cdots, l\}, I_1 \cap I_2 =\emptyset \Big\} \setminus \{\emptyset\}$$
and $F'=\{(\emptyset, B ,\emptyset): B \in \CC \}$. Then, $F'$ is the desired one. 

For $m > 0$, suppose that the result holds whenever $G \subset E(S)$ is a finite set with $\max \{|\af|: (\af,A, \af) \in G\} < m$. Fix a finite set $F \subseteq E(S) \setminus \{0\}$ with $\max \{|\af|: (\af,A,\af) \in F\}=m$. Write $F=G_1 \cup G_2$ where $G_1=\{(\af,A,\af) \in F: |\af|< m\}$ and $G_2=\{(\af,A,\af): |\af|=m\}$. Then by the induction hypothesis, there exists $G_1'$ associated to $G_1$ that satisfies the conditions in the statement. Let $L_2=\{\af \in \CL^*:  (\af,A,\af) \in G_2 ~\text{for some}~ A \in \CI_\af \}$.  For each $\af \in L_2$,  we put $$\CD_\af=\{\theta_{\af''}(A): (\af',A, \af') \in G_1' \cup G_2 ~\text{and}~ \af=\af'\af''\}$$
and consider $\CC_\af$ constructed from $\CD_\af$ as $\CC$ was constructed from $\{B_1, \cdots, B_l\}$ in the case $m=0$.
Now, define $F_2'= \bigcup_{\af \in L_2}\{(\af,B, \af) \in E(S): B \in \CC_\af \}$ and $F'=G_1' \cup F_2'$.

Clearly, $0 \notin F'$. Let $u=(\af,A,\af) , v=(\bt,B,\bt) \in F'$.
If $u$ and $v$ are both elements of either $G_1'$ or both elements of  $F_2'$, then $u$ and $v$ are such that $uv=0, u \leq v$ or $v \leq u$.
Next, suppose that  $u \in F_2'$ and $v \in G_1'$ so that $|\bt| < |\af|$. If $\af$ and $\bt$ are not comparable, then $uv=0$. Otherwise, $\af=\bt\af'$ and, by the construction of $\CC_\af$, $A \subseteq \theta_{\af'}(B)$ or $A \cap \theta_{\af'}(B) =\emptyset$. In this case $u \leq v$ or $uv=0$. From the construction, it is easy to see that the set $F'$ satisfies the conditions (1),(2) and (3) in the statement. 
\end{proof}

\begin{remark}Let $\mu=(e^{\mu_i}_{\eta_i})_{i=1}^{|\mu|} \in \partial E$ be given  and a finite set $F \subseteq E(S) \setminus \{0\}$ be such that for all $u,v \in F$, $ uv=0, u \leq v$ or $v \leq u$. Then, the set $\{(\af,A,\af) \in F: \mu \in \CN(\af,A)\} $ is totally ordered if it is non-empty.
\end{remark}

\begin{proof} Choose $ u:=(\mu_1 \cdots \mu_l, A, \mu_1 \cdots \mu_l), v:= (\mu_1 \cdots \mu_n, B, \mu_1 \cdots \mu_n) \in \{(\af,A,\af) \in F: \mu \in \CN(\af,A)\}$ with $l \leq n$. Since $d(e^{\mu_{l}}_{\eta_l})=
r(e^{\mu_{l+1}}_{\eta_{l+1}} \cdots e^{\mu_n}_{\eta_n})=f_{\emptyset[\mu_{l+1} \cdots \mu_n]}(\eta_n \cap \CI_{\mu_{l+1} \cdots \mu_n})$, it follows that $\theta_{\mu_{l+1} \cdots \mu_n}(A), B \in \eta_n$, and hence,  $\theta_{\mu_{l+1} \cdots \mu_n}(A)\cap B  \neq \emptyset$. Thus, $uv=(\mu_1 \cdots \mu_n, \theta_{\mu_{l+1} \cdots \mu_n}(A)\cap B, \mu_1 \cdots \mu_n) \neq 0$. 
\end{proof}

\begin{lem} Let $\mu=(e^{\mu_i}_{\eta_i})_{i=1}^{|\mu|} \in \partial E$ and a finite set $F \subseteq E(S) \setminus \{0\}$ be such that $\{(\af,A,\af) \in F: \mu \in \CN(\af,A)\} \neq \emptyset$ and for all $u,v \in F$, $ uv=0, u \leq v$ or $v \leq u$. If we let  $w =\min\{(\af,A,\af) \in F: \mu \in \CN(\af,A)\}$, then the projection $q_w^F$ is non-zero. 
 \end{lem}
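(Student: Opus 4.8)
The plan is to transport the whole computation into the function algebra on $\partial E$ via the isomorphism $\kp$ and then simply evaluate at the point $\mu$. Since $q_w^F$ is built from the commuting projections $s_{\af,A}s_{\af,A}^*$ by products and differences, it lies in the diagonal subalgebra $D$. Under $\kp$ each generator $s_{\af,A}s_{\af,A}^*$ becomes the characteristic function $1_{\{\emptyset\}\times\CN(\af,A)}$ supported on the unit space $\{\emptyset\}\times\partial E\cong\partial E$; products and differences of such functions are again supported on the unit space and are computed pointwise there, since on the unit space of an \'etale groupoid convolution reduces to pointwise multiplication. Writing $w=(\af,A,\af)$, this gives
$$\kp(q_w^F)=1_{\CN(\af,A)}\prod_{(\bt,B,\bt)\in F,\ (\bt,B,\bt)<w}\bigl(1_{\CN(\af,A)}-1_{\CN(\bt,B)}\bigr)$$
as an element of $C_0(\partial E)$.

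Next I would evaluate this function at $\mu$, using that point evaluation $f\mapsto f(\mu)$ is a character of $C_0(\partial E)$. By the definition of $w$ as an element of $\{(\af,A,\af)\in F:\mu\in\CN(\af,A)\}$ we have $\mu\in\CN(\af,A)$, so $1_{\CN(\af,A)}(\mu)=1$. It then remains to show that every factor indexed by $(\bt,B,\bt)<w$ evaluates to $1$ at $\mu$, i.e.\ that $\mu\notin\CN(\bt,B)$. This is precisely where the minimality of $w$ enters: if we had $\mu\in\CN(\bt,B)$ for some $(\bt,B,\bt)\in F$ with $(\bt,B,\bt)<w$, then $(\bt,B,\bt)$ would belong to $\{(\af,A,\af)\in F:\mu\in\CN(\af,A)\}$ while lying strictly below its minimum $w$, a contradiction. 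Hence each such factor equals $1-1_{\CN(\bt,B)}(\mu)=1$, and therefore $\kp(q_w^F)(\mu)=1$.

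Since $\kp(q_w^F)$ takes a nonzero value it is a nonzero element of $C_0(\partial E)$, and because $\kp$ is an isomorphism we conclude $q_w^F\neq 0$. I do not anticipate a serious obstacle here; the only point requiring care is verifying that products and differences of the functions $1_{\{\emptyset\}\times\CN(\af,A)}$ stay supported on the unit space, so that evaluation at the unit $\mu$ is multiplicative. One can instead run the identical argument through the isomorphism $\psi$ into $C_0(\partial E)\rtimes_{\hat{\varphi}}\mathbb{F}$, under which $s_{\af,A}s_{\af,A}^*\mapsto 1_{\CN(\af,A)}\dt_\emptyset$ and the same pointwise evaluation applies.
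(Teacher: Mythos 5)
Your proof is correct, but it takes a genuinely different route from the paper's. The paper works entirely inside $C^*(\CB,\CL,\theta,\CI_\af)$ and never invokes the isomorphism $\kp$: it constructs by hand an element $z=(\gm,C,\gm)\in E(S)\setminus\{0\}$ with $z\leq w$ and $zu=0$ for every $u\in F$ with $u<w$ (a case analysis on $|\mu|=\infty$ versus $|\mu|<\infty$, using that each $\eta_n$ is an ultrafilter and, in the finite case, that $d(\mu)\in E^0_{sg}$ to produce either a set $D'$ with $\Delta_{D'}=\emptyset$ or a fresh letter $\dt\in\Delta_{D'}$ avoiding all words occurring in $F$), and then observes that $s_{\gm,C}s_{\gm,C}^*q^F_w=s_{\gm,C}s_{\gm,C}^*\neq 0$. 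You instead push $q^F_w$ through $\kp$ into $C^*(\mathbb{F}\ltimes_\varphi\partial E)$, where it becomes a function supported on the unit space, and evaluate at $\mu$; minimality of $w$ forces every factor to equal $1$ there. The points you flag as needing care are indeed unproblematic: convolution of unit-space-supported functions on an \'etale groupoid is pointwise multiplication, $C_0(\GG^{(0)})$ embeds faithfully in $C^*(\GG)$, and the formula $\kp(s_{\af,A}s_{\af,A}^*)=1_{\{\emptyset\}\times\CN(\af,A)}$ is stated in Section 2; nor is there circularity, since $\kp$ (equivalently $\psi$) is quoted from \cite{CasK1} and \cite{A}, not derived in Section 3. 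What each approach buys: yours is much shorter and conceptually transparent---in effect you observe that the character the whole section is laboring to construct is just $\mathrm{ev}_\mu\circ\kp|_D$, and you only use that $w$ is a \emph{minimal} element of $\{u\in F:\mu\in\CN(u)\}$, not the full hypothesis on $F$; the paper's argument is longer but stays at the level of generators and relations, exhibits an explicit nonzero subprojection of $q^F_w$ as a witness, and keeps Section 3's characterization of the diagonal independent of the full strength of the crossed-product/groupoid isomorphism theorem.
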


\begin{proof} We first claim that there exists  $z \in E(S)\setminus \{0\}$  such that $z \leq w$ and $zu=0$ for all $u \in F$ with $u < w$.
The result is trivial if there is no $u \in F$ with $u < w$. So, suppose that there exists at least one such $u$. Let $w=(\mu_1 \cdots \mu_l, A, \mu_1 \cdots \mu_l)$ for some $l \geq 0$ and $A \in \CI_{\mu_1 \cdots \mu_l}$. 
First, consider the case that $|\mu|=\infty$. Let $n = \max\{|\bt|: (\bt, B, \bt) \in F ~\text{such that}~ (\bt, B, \bt) < w\}$. Choose an element $u \in F$ with $u < w$ of the form $u=(\mu_1 \cdots \mu_m, B , \mu_1 \cdots \mu_m)$. Since $w=\min\{(\af,A,\af) \in F: \mu \in \CN(\af,A)\}$, we have $\mu \notin \CN(\mu_1 \cdots \mu_m, B )$. So, $B \notin \eta_{m}$, and hence, $\theta_{\mu_{m+1} \cdots \mu_n}(B) \notin \eta_n$. Since $\eta_n$ is an ultrafilter, there exists $C_u \in \eta_n$ such that $C_u \cap \theta_{\mu_{m+1} \cdots \mu_n}(B) =\emptyset$. Let $C =\cap _{u \in F, u < w}C_u \in \eta_n$ and define $z=(\mu_1 \cdots \mu_n, C \cap \theta_{\mu_{l+1} \cdots \mu_n}(A), \mu_1 \cdots \mu_n)$. Then $z \neq 0$ since $C \cap \theta_{\mu_{l+1} \cdots \mu_n}(A) \in \eta_n$ and $z \leq w$. Also, it is easy to see that $zu=0$ for all $u \in F$ with $u < w$.

For the second, suppose that $0 \leq |\mu| < \infty$. By the same argument as the above case, one can choose $C \in \eta_{|\mu|}$ such that for all $u \in F$ with $u < w$ of the form $(\mu_1 \cdots \mu_m, B, \mu_1 \cdots \mu_m)$, we have that $C \cap \theta_{\mu_{m+1} \cdots \mu_{|\mu|}}(B) = \emptyset$. Put $D=C \cap \theta_{\mu_{l+1} \cdots \mu_{|\mu|}}(A) \in \eta_{|\mu|}$. Since $d(\mu) \in E^0_{sg}$, we have $D \notin \CB_{reg}$ by \cite[Lemma 7.9]{CasK1}. Then there exists $\emptyset \neq D' \subseteq D $ such that either $|\Delta_{D'}|=0$ or $|\Delta_{D'}|=\infty$. 
If  $|\Delta_{D'}|=\infty$, then choose $\dt \in \Delta_{D'}$ be the letter that is different from $\bt_{|\mu|+1}$ for all $\bt$ such that $|\bt| \geq |\mu|+1$ and that is appears in an element $v=(\bt,B, \bt) \in F$. Then, $z=(\mu \dt, \theta_\dt(D'), \mu \dt)$ is the desired one. 
If $|\Delta_{D'}|=0$, define $z=(\CP(\mu), D', \CP(\mu))$ so that $z \neq 0$ and $z \leq w$.
Let $u =(\bt,B, \bt) \in F$ be such that $ u < w$. If $\bt$ is not comparable with $\CP(\mu)$, then it is clear that $zu=0$. If $\bt$ is a beginning of $\CP(\mu)$, then $zu=0$. Finally, if $\bt=\CP(\mu)\gm$ for some $\gm \in \CL^{\geq 1}$, then $zu=0$ because $\theta_{\gm}(D')=\emptyset$.

Now, we say $w=(\af,A,\af)$ and $z=(\gm,C, \gm)$. Then, we see that 
\begin{align*} & s_{\gm,C}s_{\gm,C}^*q^F_w \\
 &=s_{\gm,C}s_{\gm,C}^* \Big( s_{\af,A}s_{\af,A}^* \prod_{(\bt,B,\bt) \in F,\ (\bt,B,\bt) < (\af,A, \af)} (s_{\af,A}s_{\af,A}^* - s_{\bt,B}s_{\bt,B}^*) \Big) \\
&=s_{\gm,C}s_{\gm,C}^*.
\end{align*}
Since $s_{\gm,C}s_{\gm,C}^* \neq 0$, we have $q^F_w \neq 0$.
\end{proof}

We are now ready to prove our main result of this section. It is a generalization of \cite[Theorem 3.7]{Web} and \cite[Theorem 6.9]{BCM2}.   

\begin{thm} Let $(\CB, \CL,\theta, \CI_\af)$ be a generalized Boolean dynamical system. Then, for each $\mu \in \partial E$, there exists a unique $\Phi(\mu) \in \widehat{D}$ such that 
$$\Phi(\mu)(s_{\af,A}s_{\af,A}^*)= 
\left\{
   \begin{array}{ll}
    1  & \hbox{if\ }  \mu \in \CN(\af,A),   \\
0 & \hbox{otherwise. }
        \end{array}
\right. $$
Moreover, the map $\Phi: \partial E \to \widehat{D}$, $\mu \mapsto \Phi(\mu)$ is a homeomorphism.
\end{thm}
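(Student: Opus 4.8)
The plan is to prove the theorem in two logically separate halves: first construct the state $\Phi(\mu)$ pointwise on the generators and verify it is a well-defined character of $D$, and then show the resulting map $\mu \mapsto \Phi(\mu)$ is a homeomorphism onto $\widehat{D}$. Since $D$ is commutative and equals $\overline{\operatorname{span}}\{s_{\af,A}s_{\af,A}^*\}$, its characters are precisely the nonzero $*$-homomorphisms to $\C$, so it suffices to define $\Phi(\mu)$ on the dense $*$-subalgebra $\operatorname{span}\{s_{\af,A}s_{\af,A}^* : \af \in \CL^*,\ A \in \CI_\af\}$, check multiplicativity, and verify continuity so that it extends to $D$.

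The heart of the well-definedness is the reduction carried out in the preceding lemmas. First I would take an arbitrary finite linear combination $\sum_i \ld_i\, s_{\af_i,A_i}s_{\af_i,A_i}^*$ and let $F = \{(\af_i,A_i,\af_i)\} \subseteq E(S) \setminus \{0\}$. Applying Lemma \ref{disjoint}, I replace $F$ by a refined set $F'$ satisfying the hypotheses of Lemma \ref{mo projections}: every original $s_{\af,A}s_{\af,A}^*$ becomes a finite sum $\sum_{u \le (\af,A,\af)} q_u^{F'}$ of the mutually orthogonal projections $q_u^{F'}$ produced there. This rewrites the whole linear combination in the orthogonal basis $\{q_u^{F'}\}$, and on such a basis a character must send each $q_u^{F'}$ to either $0$ or $1$. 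For a fixed $\mu \in \partial E$, the Remark and the Lemma immediately preceding the theorem tell me that $\{(\af,A,\af) \in F' : \mu \in \CN(\af,A)\}$ is totally ordered and, when nonempty, its minimal element $w$ has $q_w^{F'} \ne 0$. I then declare $\Phi(\mu)$ to send $q_w^{F'}$ to $1$ and every other $q_u^{F'}$ to $0$; unwinding $s_{\af,A}s_{\af,A}^* = \sum_{u \le (\af,A,\af)} q_u^{F'}$ shows this forces exactly the stated value $1$ iff $\mu \in \CN(\af,A)$, which is manifestly independent of the chosen refinement $F'$, giving well-definedness. Multiplicativity follows from Remark \ref{computations of generators}(3), since $\mu \in \CN(\af,A) \cap \CN(\bt,B)$ iff $\mu$ lies in the cylinder of the product projection, and $\|\Phi(\mu)\| \le 1$ gives boundedness, so $\Phi(\mu)$ extends to a character of $D$.

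For the homeomorphism statement I would argue as follows. Injectivity: if $\mu \ne \nu$ in $\partial E$, the basic open sets $\CN(\af,A)$ separate points of $\partial E$ (they generate its topology), so some $\CN(\af,A)$ contains one but not the other, whence $\Phi(\mu) \ne \Phi(\nu)$. Surjectivity: given any character $\chi \in \widehat{D}$, the values $\chi(s_{\af,A}s_{\af,A}^*) \in \{0,1\}$ determine a coherent family of cylinder memberships; using that the $\CN(\af,A)$ form a basis of compact-open sets for $\partial E$ and the identification $\kp(s_{\af,A}s_{\af,A}^*) = 1_{\{\af\af^{-1}\}\times\CN(\af,A)}$ from the groupoid picture (equivalently $D \cong C_0(\partial E)$ via $\psi$), one sees $\chi$ must be evaluation at some boundary path, i.e. $\chi = \Phi(\mu)$. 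Continuity of $\Phi$ and of its inverse: the weak-$*$ topology on $\widehat{D}$ is generated by the maps $\chi \mapsto \chi(s_{\af,A}s_{\af,A}^*)$, and $\Phi^{-1}$ of the corresponding subbasic set is exactly $\CN(\af,A)$ (a basic open set of $\partial E$), so $\Phi$ is continuous with open inverse; as both $\partial E$ and $\widehat{D}$ are locally compact Hausdorff this yields a homeomorphism.

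The main obstacle I anticipate is the well-definedness argument, specifically showing that the candidate value of $\Phi(\mu)$ on a linear combination does not depend on how the combination is written — two different expressions may refine to genuinely different orthogonal sets $F'$. The force of Lemmas \ref{mo projections} and \ref{disjoint} together with the two intervening results is precisely to tame this: they guarantee the refinement exists, that the refined projections are orthogonal with the correct non-vanishing at the minimal cylinder containing $\mu$, and that the final value $\mathbf 1_{[\mu \in \CN(\af,A)]}$ depends only on $\mu$ and the original generator. The surjectivity onto $\widehat{D}$ is the secondary subtlety, where I would lean on the established isomorphism $D \cong C_0(\partial E)$ rather than reconstructing the boundary path from an abstract character by hand.
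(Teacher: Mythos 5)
Your first half is, in substance, the paper's own argument: the paper also refines $F$ to $F'$ via Lemma \ref{disjoint}, rewrites the linear combination in the mutually orthogonal projections $q^{F'}_u$ of Lemma \ref{mo projections}, and uses the non-vanishing of $q^{F'}_w$ at the minimal $w$ (the unlabelled remark and lemma preceding the theorem) to conclude; it just packages the conclusion as the norm inequality $\|\sum\ld_{(\af,A,\af)}s_{\af,A}s_{\af,A}^*\|\geq|\sum_{\mu\in\CN(\af,A)}\ld_{(\af,A,\af)}|$. That inequality settles in one stroke the point your write-up treats a little loosely: what must be checked is independence of the chosen representation of an element of $\operatorname{span}\{s_{\af,A}s_{\af,A}^*\}$ (i.e.\ that combinations equal to zero are sent to zero), not merely independence of the refinement $F'$; and it also yields the bound $\|\Phi(\mu)\|\leq 1$ that you assert without proof. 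Where you genuinely depart from the paper is the second half. The paper proves surjectivity by explicitly rebuilding a boundary path from a character $\phi$ --- forming the filter $\{(\af,A,\af):\phi(s_{\af,A}s_{\af,A}^*)=1\}$, extracting its word via \cite[Lemma 3.5]{CasK1} and a complete family via \cite[Proposition 3.7]{CasK1}, and checking $d(\mu)\in E^0_{sg}$ via \cite[Lemma 7.9]{CasK1} --- and then proves continuity of $\Phi$ and $\Phi^{-1}$ by a lengthy case analysis on nets. You instead invoke $D\cong C_0(\partial E)$ and Gelfand theory. That route is legitimate and not circular: the identification follows from $\kp(s_{\af,A}s_{\af,A}^*)=1_{\{\emptyset\}\times \CN(\af,A)}$ together with the quoted \cite[Lemma 3.18]{CasK2}, because the cylinder indicators have linear span closed under multiplication, so the $C^*$-algebra they generate equals their closed span; the paper itself uses exactly this identification later, in Propositions \ref{char:abelian core} and \ref{MASA}. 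It is also much shorter --- so much so that it makes your first half redundant: $\Phi(\mu):=\operatorname{ev}_\mu\circ\kp|_D$ is already a character with the prescribed values, and $\Phi=(\kp|_D)^*\circ j$, where $j:\partial E\to\widehat{C_0(\partial E)}$ is the canonical Gelfand homeomorphism, is then automatically a homeomorphism. What the paper's longer proof buys is independence from the partial-crossed-product machinery and an explicit recipe for recovering the path from a character.

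Two of your supporting claims need repair, though neither sinks the approach. First, the cylinders $\CN(\af,A)$ do not form a basis of, nor generate, the topology of $\partial E$: the basic open sets are $\CZ(U)\cap\CZ(K)^c$, and the compact-complement part is essential. Already for one vertex emitting infinitely many loops, any finite intersection of cylinders containing a finite boundary path $\mu$ contains every one-edge extension of $\mu$, so a set of the form $\CZ(\{\mu\})\cap\CZ(\{\mu e\})^c$ is open in $\partial E$ but not in the cylinder-generated topology. What your separation (injectivity) and openness arguments actually need is that the cylinders \emph{together with their complements} generate the topology; this is true, but it is again a consequence of \cite[Lemma 3.18]{CasK2} (uniform limits of polynomials in the indicator functions remain continuous for the coarser topology, so Urysohn functions for the original topology are too), not a definition-level fact. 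Second, a continuous bijection between locally compact Hausdorff spaces need not be a homeomorphism (consider $[0,1)\to\T$), so your closing appeal is invalid as stated; the correct finish within your framework is either the openness argument you sketch (under a bijection, images of cylinders are the subbasic sets $\{\chi\in\widehat{D}:\chi(s_{\af,A}s_{\af,A}^*)=1\}$, so openness follows once the corrected generation statement is in place), or, cleaner, the factorization $\Phi=(\kp|_D)^*\circ j$ above, which gives the homeomorphism outright.
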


\begin{proof} Fix $\mu=(e^{\mu_i}_{\eta_i})_{i=1}^{|\mu|} \in \partial E$ and $\sum_{(\af,A, \af) \in F} \ld_{(\af,A,\af)} s_{\af,A}s_{\af,A}^* \in \text{span}\{s_{\af,A}s_{\af,A}^*: (\af,A, \af) \in E(S)\}$ for some finite $F \subset E(S)$.
 We first claim that 
 
 \begin{align} \Big\|  \sum_{(\af,A, \af) \in F} \ld_{(\af,A,\af)} s_{\af,A}s_{\af,A}^* \Big\| \geq  \Big|\sum_{(\af,A,\af) \in F; \mu \in \CN(\af,A)} \ld_{(\af,A, \af)} \Big|.\end{align}

If $ \mu \notin \CN(\af, A)$ for all $(\af,A, \af) \in F$, then the result is clear. So, suppose $\mu \in \CN(\af,A)$ for some $(\af,A, \af) \in F$. Let $F'$ be the set  constructed from $F$ as in Lemma \ref{disjoint}. Note that $\{(\gm, C, \gm) \in F': \mu \in \CN(\gm,C)\} \neq \emptyset$. In fact, for $(\af,A,\af) \in F$ such that $\mu \in \CN(\af,A)$, there exist $(\af,B_1,\af)$, $\cdots$, $(\af, B_m, \af) \in F'$ such that $A =\sqcup_{i=1}^m B_i$. Since $\eta_{|\af|}$ is prime  and $A \in \eta_{|\af|}$, $B_{i_0} \in \eta_{|\af|}$ for some $i_0 \in \{1, \cdots,m\}$. So, $\mu \in \CN(\af, B_{i_0})$ for some $(\af, B_{i_0}, \af) \in F'$.

We also note that for  each $(\gm, C, \gm) \in F'$ and $(\af,A, \af) \in F$ with $(\gm,C,\gm) \leq (\af,A,\af)$, if $(\af,B_i,\af), (\af, B_j,\af) \in F'$ with  $ (\gm,C,\gm) \leq (\af,B_i,\af) \leq (\af,A,\af)$ and   $ (\gm,C,\gm) \leq (\af,B_j,\af) \leq (\af,A,\af)$, then $B_i=B_j$. In deed, saying $\gm=\af\gm'$, we see that $C \subseteq \theta_{\af'}(B_i) \cap \theta_{\af'}(B_j)=\theta_{\af'}(B_i \cap B_j )$. If $B_i \neq B_j$, then $B_i \cap B_j =\emptyset$, and hence, it follows that  $C =\emptyset$, this is not the case. So, $B_i=B_j$.  

Then, we have
\begin{align*} &\sum_{(\af,A, \af) \in F} \ld_{(\af,A,\af)} s_{\af,A}s_{\af,A}^*  \\
&= \sum_{(\af,A, \af) \in F} \ld_{(\af,A,\af)} s_{\af,A}\Big( \sum_{(\af,B,\af) \in F', B \subseteq A}p_B\Big)s_{\af,A}^* \\
&= \sum_{(\af,A, \af) \in F} \ld_{(\af,A,\af)}\Big( \sum_{(\af,B,\af) \in F', B \subseteq A}  s_{\af,A} p_B s_{\af,A}^*\Big) \\
&= \sum_{(\af,A, \af) \in F} \ld_{(\af,A,\af)}\sum_{(\af,B,\af) \in F', B \subseteq A}  s_{\af,B}  s_{\af,B}^* \\
&=\sum_{(\af,A, \af) \in F} \ld_{(\af,A,\af)}\sum_{(\af,B,\af) \in F', B \subseteq A}   \Big( \sum_{(\gm,C, \gm) \in F', (\gm,C,\gm) \leq (\af,B,\af)} q^{F'}_{(\gm,C,\gm)}\Big) \\
&=\sum_{(\af,A, \af) \in F} \ld_{(\af,A,\af)} \Big( \sum_{(\gm,C, \gm) \in F', (\gm,C,\gm) \leq (\af,A,\af)} q^{F'}_{(\gm,C,\gm)} \Big)\\
&=\sum_{(\gm,C, \gm) \in F'} \Big(\sum_{(\af,A,\af) \in F, (\gm,C,\gm) \leq (\af,A,\af)}   \ld_{(\af,A,\af)}\Big) q^{F'}_{(\gm,C,\gm)},
\end{align*}
where  the fifth equation follows from the note in the previous paragraph.

Now, let $\omega=\min \{(\gm, C,\gm) \in F': \mu \in \CN(\gm,C)\}$. We observe that if $w:=(\gm,C,\gm)$ with $w \leq (\af,A,\af)$, then $w \in \CN(\af,A)$. In deed, saying $\gm=\af\gm'$, we have $C \subseteq \theta_{\gm'}(A)$. Since $C \in \eta_{|\gm|}$, we have   $\theta_{\gm'}(A) \in \eta_{|\gm|}$, and hence,   $A \in \eta_{|\af|}$. So, $w \in \CN(\af,A)$.
 Thus,  it follows that  
\begin{align*} 
& \Big\| \sum_{(\af,A, \af) \in F} \ld_{(\af,A,\af)} s_{\af,A}s_{\af,A}^*  \Big\| \\
&= \Big\| \sum_{(\gm,C, \gm) \in F'} \Big(\sum_{(\af,A,\af) \in F, (\gm,C,\gm) \leq (\af,A,\af)}   \ld_{(\af,A,\af)}\Big) q^{F'}_{(\gm,C,\gm)}\Big\|\\
&= \max_{(\gm,C,\gm) \in F', q^{F'}_{(\gm,C,\gm)} \neq 0} \Big| \sum_{(\af,A,\af) \in F, (\gm,C,\gm) \leq (\af,A,\af)}  \ld_{(\af,A,\af)} \Big| \\
& \geq  \Big| \sum_{(\af,A,\af) \in F, \omega \leq (\af,A,\af)}   \ld_{(\af,A,\af)} \Big| \\
&= \Big| \sum_{(\af,A,\af) \in F, \omega \in \CN(\af,A)}   \ld_{(\af,A,\af)} \Big|.
\end{align*} 
Hence,  the formula 
$$\Phi(\mu)\Big(\sum_{(\af,A, \af) \in F} \ld_{(\af,A,\af)} s_{\af,A}s_{\af,A}^* \Big)=\sum_{(\af,A, \af) \in F, \ \mu \in \CN(\af,A)} \ld_{(\af,A,\af)} $$ determines a well-defined norm-decreasing  linear map $\Phi(\mu) $ from $ \text{span}\{s_{\af,A}s_{\af,A}^*: (\af,A, \af) \in E(S)\}$ to $\mathbb{C}$.
To see that $\Phi(\mu)$ is a homomorphism, it is enough to show that 
\begin{align}\label{homo}
\Phi(\mu)(s_{\af,A}s_{\af,A}^*s_{\bt,B}s_{\bt,B}^*)=\Phi(\mu)(s_{\af,A}s_{\af,A}^*)\Phi(\mu)(s_{\bt,B}s_{\bt,B}^*).
\end{align}
We observe that 
\[
\Phi(\mu)(s_{\af,A}s_{\af,A}^*s_{\bt,B}s_{\bt,B}^*)= \left\{ 
\begin{array}{ll}
        1  & \hbox{if\ }\af=\bt\af' ~\text{and}~ \mu \in \CN(\af, A \cap \theta_{\af'}(B)) \\
    & \hbox{or\ } \bt=\af\bt' ~\text{and}~ \mu \in \CN(\bt, B \cap \theta_{\bt'}(A)) \\
    0   & \hbox{otherwise} \\
\end{array}
\right.
\]
and that 
\[
\Phi(\mu)(s_{\af,A}s_{\af,A}^*)\Phi(\mu)(s_{\bt,B}s_{\bt,B}^*)= \left\{ 
\begin{array}{ll}
        1  & \hbox{if\ }  \mu \in \CN(\af,A) \cap \CN(\bt,B)  \\
    0   & \hbox{otherwise.} \\
\end{array}
\right.
\]
Then by \cite[Lemma 3.9]{CasK2}, the equation (\ref{homo}) follows.

Thus,  $\Phi(\mu)$  is a nonzero continuous homomorphism on a dense subspace of $D$, and hence,  it extends uniquely to a nonzero  homomorphism $\Phi(\mu): D \to \mathbb{C}$. 

To show that $\Phi$ is a homeomorphism, we prove the following:

  \begin{itemize}
    \item  The map $\Phi: \partial E \to \widehat{D}$ is  surjective; 
\end{itemize}

 fix $\phi \in \widehat{D}$ and let $\xi=\{(\af,A,\af) \in E(S): \phi(s_{\af,A}s_{\af,A}^*)=1\}$. Then $\xi$ is a filter; clearly, $0 \notin \xi$. If $(\af, A, \af) \leq (\bt, A, \bt)$ and $\phi(s_{\af,A}s_{\af,A}^*)=1$. Then, 
$$\phi(s_{\bt,B}s_{\bt,B}^*)=\phi(s_{\af,A}s_{\af,A}^*) \phi(s_{\bt,B}s_{\bt,B}^*)=\phi(s_{\af,A}s_{\af,A}^*s_{\bt,B}s_{\bt,B}^*)=\phi(s_{\af,A}s_{\af,A}^*)=1.$$
Thus, $(\bt,B,\bt) \in \xi$. It is straightforward to check that if $(\af, A, \af), (\bt, A, \bt) \in \xi$, then $(\af, A, \af)(\bt, A, \bt) \in \xi $.

Then, by \cite[Lemma 3.5]{CasK1}, there exists a unique word $\bt \in \CW^{\leq \infty}$ associated with $\xi$ such that for all $(\af,A,\af) \in \xi$,  $\af$ is an initial segment of $\bt$. For each $0 \leq n \leq |\bt|$, put 
$\xi_n:=\{A \in \CI_{\bt_{1,n}}: \phi(s_{\bt_{1,n},A}s_{\bt_{1,n},A}^*)=1\}$. 
Then by \cite[Proposition 3.7]{CasK1}, $\{\xi_n\}_{0 \leq n \leq |\bt|}$ is a complete family for $\bt$.

 Now we put $\eta_1=\xi_1$ and $\eta_n=h_{[\bt_{1,n-1}]\bt_n}(\xi_n)$ for $2\leq n\leq|\bt|$.
 Define 
\[\mu:=\begin{cases}
\xi_0 & \text{if } \bt=\emptyset, \\
(e^{\bt_n}_{\eta_n})_{n=1}^{|\bt|} &\text{if }|\bt|\geq 1.
\end{cases}\]
We claim that $\mu \in \partial E$. For the case that  $0 \leq |\bt| < \infty$, we first  show that $B \notin \CB_{reg}$ for all $B \in d(e^{\bt_{|\bt|}}_{\eta_{|\bt|}}) (=h_{[\bt]\emptyset}(\xi_{|\bt|})= \uparrow_{\CB} \xi_{|\bt|})$. Assume to the contrary that $B \in \CB_{reg}$ for some $B \in \uparrow_{\CB} \xi_{|\bt|}$. One then  can choose 
$A \in \xi_{|\bt|} \cap \CB_{reg}$, and hence, we have $p_A =\sum_{\gm \in \Delta_A} s_{\gm, \theta_\gm(A)}s_{\gm, \theta_\gm(A)}^*$. Since $A \in \xi_{|\bt|}$, we see that 
$$1=\phi(s_{\bt,A}s_{\bt,A}^*)=\phi(s_{\bt,A}p_As_{\bt,A}^*)=\sum_{\gm \in \Delta_A} \phi(s_{\bt\gm, \theta_\gm(A)}s_{\bt\gm, \theta_\gm(A)}^*).$$
Thus, $(\bt\gm, \theta_\gm(A), \bt\gm) \in \xi$  for some $\gm \in \Delta_A$, but this contradicts to the fact that $\bt$ is the largest word for $\xi$. 
Hence,  $B \notin \CB_{reg}$ for all $B \in d(e^{\bt_{|\bt|}}_{\eta_{|\bt|}}) $. It then follows by \cite[Lemma 7.9]{CasK1} that $d(e^{\bt_{|\bt|}}_{\eta_{|\bt|}}) \in E^0_{sg}$. 

Next, we see that 
\begin{align*}
    d(e^{\bt_n}_{\eta_n})  &=h_{[\bt_{n}]\emptyset}(\eta_n) \\
    &=h_{[\bt_{1,n}]\emptyset}(\xi_n)\\   
    &=h_{[\bt_{1,n}]\emptyset}(f_{\bt_{1,n}[\bt_{n+1}]}(\xi_{n+1}))\\
    &=f_{\emptyset[\bt_{n+1}]}(h_{[\bt_{1,n}]\bt_{n+1}}(\xi_{n+1}))\\
    &=f_{\emptyset[\bt_{n+1}]}(\eta_{n+1}) \\
    &=r(e^{\bt_{n+1}}_{\eta_{n+1}})
\end{align*}
for $1 \leq n < |\bt|$. Therefore, $\mu \in \partial E$ if  $0 \leq |\bt| < \infty$.
The same argument as above  says that $\mu \in  \partial E$ if  $|\bt|=\infty$.

To see that $\Phi(\mu)=\phi$, notice that For $(\af,A,\af) \in E(S)$, we have 
\begin{align*} \phi(s_{\af,A}s_{\af,A}^*)=1 & \iff \af=\bt_{1,n}~\text{and}~ A \in \xi_n   ~\text{for some}~ n \geq 1 \\
& \iff \Phi(\mu)(s_{\af,A}s_{\af,A}^*)=1.
\end{align*}
    Since both $\phi(s_{\af,A}s_{\af,A}^*)$ and $\Phi(\mu)(s_{\af,A}s_{\af,A}^*)$ take values in $\{0,1\}$, it follows that $\Phi(\mu)=\phi$.
    
       \begin{itemize}
    \item  The map $\Phi$ is injective;
\end{itemize}

    suppose that $\Phi(\mu)=\Phi(\nu)$ for $\mu=(e^{\mu_i}_{\eta_i})$ and $ \nu=(e^{\nu_i}_{\xi_i}) \in \partial E$.    
    For each $n \in \mathbb{N}$, let $n \wedge |\mu|=\min \{n , |\mu|\}$ for $\mu \in \partial E$. Fix $n \wedge |\mu| \in \mathbb{N}$ and choose $A \in \eta_{n \wedge |\mu|}$. Then, we have 
    $$\Phi(\nu)(s_{\CP(\mu)_{1, n \wedge |\mu|},A}s_{\CP(\mu)_{1, n \wedge |\mu|},A}^*)=\Phi(\mu)(s_{\CP(\mu)_{1, n \wedge |\mu|},A}s_{\CP(\mu)_{1, n \wedge |\mu|},A}^*)=1.$$
       So, $\CP(\nu)_{1, n \wedge |\mu|}=\CP(\mu)_{1, n \wedge |\mu|}$ and $A \in \xi_{n \wedge |\mu|}$ for all $n \in \mathbb{N}$. By symmetry, if $A \in \xi_{n \wedge |\mu|} $, we also have that 
    $\CP(\mu)_{1, n \wedge |\nu|}=\CP(\nu)_{1, n \wedge |\nu|}$ and $A \in \eta_{n \wedge |\mu|}$ for all $n \in \mathbb{N}$. Thus, $|\mu|=|\nu|$, $\CP(\mu)_{1, n}=\CP(\nu)_{1, n }$ and $\eta_n=\xi_n$ for all $n \leq |\mu|$. Thus, $\mu=\nu$, and hence, $\Phi$ is injective.

    \begin{itemize}
    \item  The map $\Phi$ is continuous;
\end{itemize}

suppose that $\{\mu^{\ld}\}_{\ld \in \Lambda}$ converges to $\mu$, where $\mu^\ld=(e^{\mu_i^{(\ld)}}_{\eta_i^{(\ld)}})_{i=1}^{|\mu^\ld|}$ and $\mu=(e^{\mu_i}_{\eta_i})_{i=1}^{|\mu|}$. We claim that $\{\Phi(\mu^\ld)\}_{\ld \in \Lambda} $ converges to $ \Phi(\mu)$.  
Let $\af:=\af_1 \cdots \af_n \in \CL^*$ and $A \in \CI_\af$. First suppose that $\Phi(\mu)(s_{\af,A}s_{\af,A}^*)=1$.
If $\af =\emptyset$, then $A \in r(\mu)$. Since $\{r(\mu^\ld)\}_{\ld \in \Lambda}$ converges to $r(\mu)$, there exists $\ld_0 \in \Lambda$ such that $A \in r(\mu^\ld)$ for all $\ld \geq \ld_0$. Thus, $\Phi(\mu^\ld)(s_{\af,A}s_{\af,A}^*)=1$ for all $\ld \geq \ld_0$.
If $\af \neq \emptyset$, then $\mu \in \CN(\af,A)$. 
Choose $B \in \CI_{\af_1}$ such that $A \subseteq \theta_{\af_{2,n}}(B)$. 
One then can see that  $e^{\mu_1}_{\eta_1}e^{\mu_2}_{\eta_2} \cdots e^{\mu_{n}}_{\eta_{n}} \in \big( Z^1(\af_1, B) \times Z^1(\af_2, \theta_{\af_2}(B)) \times \cdots \times Z^1(\af_{n-1}, \theta_{\af_2 \cdots \af_{n-1}}(B)) \times Z^1(\af_{n}, A) \Big) \cap E^{n}$. Since $\mu^\ld \to \mu$, there is $\ld_0 \in \Lambda$ such that 
$e^{\mu_1^{(\ld)}}_{\eta_1^{(\ld)}}e^{\mu_2^{(\ld)}}_{\eta_2^{(\ld)}} \cdots e^{\mu_n^{(\ld)}}_{\eta_n^{(\ld)}}  \in \big( Z^1(\af_1, B) \times Z^1(\af_2, \theta_{\af_2}(B)) \times \cdots \times Z^1(\af_{n-1}, \theta_{\af_2 \cdots \af_{n-1}}(B)) \times Z^1(\af_{n}, A) \Big) \cap E^{n}$ for all $\ld \geq \ld_0$. Thus, $\Phi(\mu^\ld)(s_{\af,A}s_{\af,A}^*)=1$ for all $\ld \geq \ld_0$.

Secondly, suppose that  $\Phi(\mu)(s_{\af,A}s_{\af,A}^*)=0$. We consider the following  cases:

If $\af =\emptyset$, then $r(\mu)$ belongs to the open set $F^0 \setminus Z(A)$ for $F^0$. Since $\{r(\mu^\ld)\}_{\ld \in \Lambda}$ converges to $r(\mu)$, there exists $\ld_0 \in \Lambda$ such that $ r(\mu^\ld) \in F^0 \setminus Z(A)$ for all $\ld \geq \ld_0$. Thus, $\Phi(\mu^\ld)(s_{\af,A}s_{\af,A}^*)=0$ for all $\ld \geq \ld_0$.

If $1 \leq |\af| \leq |\mu|$ and $\af$ is not a beginning of $\CP(\mu)$, then we can find $\ld_0 \in \Lambda$ such that $\CP(\mu^\ld)_{1, |\af|}=\CP(\mu)_{1, |\af|} \neq \af$ for all $\ld \geq \ld_0$. Thus,  $\Phi(\mu^\ld)(s_{\af,A}s_{\af,A}^*)=0$ for all $\ld \geq \ld_0$.

If  $1 \leq |\af| \leq |\mu|$ and $\af$ is  a beginning of $\CP(\mu)$, then one can  find $\ld_1 \in \Lambda$ such that $\{e^{\mu_1^{(\ld)}}_{\eta_1^{(\ld)}}e^{\mu_2^{(\ld)}}_{\eta_2^{(\ld)}} \cdots e^{\mu_{|\af|}^{(\ld)}}_{\eta_{|\af|}^{(\ld)}} \}_{\ld \geq \ld_1}$ converges to $e^{\mu_1}_{\eta_1}e^{\mu_2}_{\eta_2} \cdots e^{\mu_{|\af|}}_{\eta_{|\af|}}$ in $E^{|\af|}$. So, $\CP(\mu^\ld)_{1, |\af|}=\CP(\mu)_{1, |\af|} = \af$ for all $\ld \geq \ld_1$. Moreover,  $\{\eta^{(\ld)}_{|\af|}\}_{\ld \geq \ld_1}$ converges to $\eta_{|\af|}$. 
Since $A \notin \eta_{|\af|} $, there exists $\ld_0 \geq \ld_1$ such that $A \notin \eta^{(\ld)}_{|\af|}$ for all $\ld \geq \ld_0$.  Thus,  $\Phi(\mu^\ld)(s_{\af,A}s_{\af,A}^*)=0$ for all $\ld \geq \ld_0$.

If $|\af| > |\mu|$ and $\CP(\mu)$ is not a beginning of $\af$, 
then we can find $\ld_0 \in \Lambda$ such that $\CP(\mu^\ld)_{1, |\mu|}=\CP(\mu) \neq  \af_{1, |\mu|}$ for all $\ld \geq \ld_1$. Thus,  $\Phi(\mu^\ld)(s_{\af,A}s_{\af,A}^*)=0$ for all $\ld \geq \ld_0$.

Lastly, we suppose that  $|\af| > |\mu|$ and $\CP(\mu)$ is a beginning of $\af$.  
Since $A \in \CI_\af$, there exists $B \in \CB$ such that $A \subseteq \theta_{\af_{2, |\af|}}(B)$. Then, $\theta_{\af_{2, |\mu|+1}}(B) \in \CI_{\af_{1,|\mu|+1}} \subseteq \CI_{\af_{|\mu|+1}}$ and observe that $K:=Z^1(\af_{|\mu|+1}, \theta_{\af_{2, |\mu|+1}}(B))$ is a compact subset of $E^1$. Then, there exists $\ld_0 \in \Lambda$
 such that for all $\ld \geq \ld_0$, either $|\mu^\ld|=|\mu|$ or $|\mu^\ld| > |\mu|$ and $\mu^\ld_{|\mu|+1} \notin K$. 
 In the first case, $\Phi(\mu^\ld)(s_{\af,A}s_{\af,A}^*)=0$ for all $\ld \geq \ld_0$ since $|\mu^\ld|=|\mu| < |\af|$ for all $\ld \geq \ld_0$.
In the second case, if $\af$ is not a beginning of $\mu^{\ld}$, then $\Phi(\mu^\ld)(s_{\af,A}s_{\af,A}^*)=0$ for all $\ld \geq \ld_0$. If $\af$ is a beginning of $\mu^{\ld}$, then $\theta_{\af_{2, |\mu|+1}}(B)  \notin \eta^{(\ld)}_{|\mu|+1}$ since $\mu^\ld_{|\mu|+1} \notin K$. Then $\theta_{|\mu|+2, |\af|}(\theta_{\af_{2, |\mu|+1}}(B) )=\theta_{2, |\af|}(B) \notin \eta^{(\ld)}_{|\af|}$. Since $A \subseteq \theta_{\af_{2, |\af|}}(B)$, we have $A \notin \eta^{(\ld)}_{|\af|}$. Thus, $\Phi(\mu^\ld)(s_{\af,A}s_{\af,A}^*)=0$ for all $\ld \geq \ld_0$.

Thus, $\Phi(\mu^\ld)(s_{\af,A}s_{\af,A}^*)$ converges to $\Phi(\mu)(s_{\af,A}s_{\af,A}^*)$ for all $\af \in \CL^*$ and $A \in \CI_\af$.  Then, a standard $\epsilon / 3$ argument shows that  $\Phi(\mu^\ld)(a)$ converges to $\Phi(\mu)(a)$ for every $a \in D$. Hence, $\Phi$ is continuous. 

  \begin{itemize}
    \item  $\Phi^{-1}$ is continuous;
\end{itemize}

 suppose that $\{\Phi(\mu^\ld)\}_{\ld \in \Lambda} $ converges to $ \Phi(\mu)$, where $\mu^\ld=(e^{\mu_i^{(\ld)}}_{\eta_i^{(\ld)}})_{i=1}^{|\mu^\ld|}$ and $\mu=(e^{\mu_i}_{\eta_i})_{i=1}^{|\mu|}$. We prove that $\{\mu^{\ld}\}_{\ld \in \Lambda}$ converges to $\mu$:
 
(1) Fix a basic open set $Z(A)$ containing $r(\mu) (=f_{\emptyset[\mu_1]}(\eta_1))$. Then, $\theta_{\mu_1}(A) \in \eta_1 $, so we have $\Phi(\mu)(s_{\mu_1,\theta_{\mu_1 }(A) }s_{\mu_1 ,\theta_{\mu_1}(A)}^*)=1$. Since $\Phi(\mu^\ld) \to \Phi(\mu)$, there is $\ld_0 \in \Lambda$ such that $\Phi(\mu^\ld)(s_{\mu_1,\theta_{\mu_1 }(A) }s_{\mu_1 ,\theta_{\mu_1}(A)}^*)=1$ for all $\ld \geq \ld_0$. Thus, $\mu_1^{(\ld)}=\mu_1$ and $\theta_{\mu_1 }(A) \in \eta_1^{(\ld)}$ for all $\ld \geq \ld_0$. Thus, $r(\mu^\ld) \in Z(A)$ for  $\ld \geq \ld_0$. Hence, $\{r(\mu^\ld)\}_{\ld}$ converges to $r(\mu)$. 

(2)  Let $1 \leq k \leq |\mu|$ with $k \neq \infty$.
Take  a basic open set $\big(Z^1(\mu_1, B_1) \times Z^1(\mu_2,B_2) \times \cdots \times Z^1(\mu_k, B_k)\big) \cap E^k$  in $E^k$ containing $e^{\mu_1}_{\eta_1}e^{\mu_2}_{\eta_2} \cdots e^{\mu_k}_{\eta_k}$.
Then, $\Phi(\mu)(s_{\mu_{1,i}, B_i}s_{\mu_{1,i}, B_i}^*)=1$ for $1 \leq i \leq k$. So, for each $1 \leq i \leq k$,   there is $\ld_i \in \Lambda$ such that $\Phi(\mu^{\ld})(s_{\mu_{1,i}, B_i}s_{\mu_{1,i}, B_i}^*)=1$ as $\ld \geq \ld_i$. 
So, for each $1 \leq i \leq k$, $\CP(\mu^{\ld})_{1,i}=\CP(\mu)_{1,i}$ and $B_i \in \eta^{(\ld)}_i$ for all $\ld \geq \ld_i$.
Take $\ld_0:=\max\{\ld_1, \cdots, \ld_k\}$. Then, for each $1 \leq i \leq k$, we have 
 $\mu^{(\ld)}_i=\mu_i$ and $B_i \in \eta^{(\ld)}_i$  as $\ld \geq \ld_0$.
 Thus, $e^{\mu_1^{(\ld)}}_{\eta_1^{(\ld)}}e^{\mu_2^{(\ld)}}_{\eta_2^{(\ld)}} \cdots e^{\mu_k^{(\ld)}}_{\eta_k^{(\ld)}} \in \big(Z^1(\mu_1, B_1) \times Z^1(\mu_2,B_2) \times \cdots \times Z^1(\mu_k, B_k)\big) \cap E^k$ as $\ld \geq \ld_0$.
It means that $\{e^{\mu_1^{(\ld)}}_{\eta_1^{(\ld)}}e^{\mu_2^{(\ld)}}_{\eta_2^{(\ld)}} \cdots e^{\mu_k^{(\ld)}}_{\eta_k^{(\ld)}} \}_{\ld \in \Lambda}$ converses to $e^{\mu_1}_{\eta_1}e^{\mu_2}_{\eta_2} \cdots e^{\mu_k}_{\eta_k}$ in $E^k$.

(3) Suppose  $|\mu| < \infty$ and let $K \subseteq E^1$ be compact. Since $K$ is compact, there exist  basic compact open subsets $Z^1(\bt_1, B_1), Z^1(\bt_2, B_2), \cdots, Z^1(\bt_n, B_n)$ of $E^1$ such that $K \subseteq Z^1(\bt_1, B_1) \cup Z^1(\bt_2, B_2) \cup \cdots \cup Z^1(\bt_n, B_n)$. 
Choose $A \in \eta_{|\mu|}$. Then $\Phi(\mu)(s_{\mu,A}s_{\mu,A}^*)=1$ and $\Phi(\mu)(s_{\mu\bt_i,\theta_{\bt_i}(A) \cap B_i}s_{\mu\bt_i,\theta_{\bt_i}(A) \cap B_i}^*)=0$ for each $1 \leq i \leq n$. Since $\Phi(\mu^{\ld}) \to \Phi(\mu)$, 
one can find $\ld_0 \in \Lambda$ such that  $\Phi(\mu^\ld)(s_{\mu,A}s_{\mu,A}^*)=1$ and $\Phi(\mu^\ld)(s_{\mu\bt_i,\theta_{\bt_i}(A) \cap B_i}s_{\mu\bt_i,\theta_{\bt_i}(A) \cap B_i}^*)=0$  for all $\ld \geq \ld_0$. Thus, for $\ld \geq \ld_0$, we have $|\mu^\ld| \geq |\mu|$, and if $|\mu^\ld| > |\mu|$, then either $\mu^{(\ld)}_{|\mu|+1} \neq \bt_i$ for all $1 \leq i \leq n$, or if $\mu^{(\ld)}_{|\mu|+1} =\bt_i$ for some $1 \leq i \leq n$, then $\theta_{\bt_i}(A) \cap B_i \notin \eta^{(\ld)}_{|\mu|+1}$. 
In the latter case, it follows that $B_i \notin \eta^{(\ld)}_{|\mu|+1}$ since  $\theta_{\bt_i}(A) \in \eta^{(\ld)}_{|\mu|+1}$. 
Thus, if  $|\mu^\ld| > |\mu|$, then $e^{\mu^{(\ld)}_{|\mu|+1}}_{\eta^{(\ld)}_{|\mu|+1}} \notin K$.
\end{proof}

\section{The abelian core and a generalized Uniqueness Theorem}\label{GUT} 

In this section, we define a commutative subalgebra of $C^*(\CB, \CL,\theta, \CI_\af)$, so-called the abelian core, and  show that the ablelian core is isomorphic to the $C^*$-algebra of the interior  of the isotropy group bundle $\operatorname{Iso}(\mathbb{F} \ltimes_\varphi \partial E)$ in $\mathbb{F} \ltimes_\varphi \partial E$. Using this result, we prove a generalized  uniqueness theorem of $C^*(\CB, \CL,\theta, \CI_\af)$, which states that a representation of $C^*(\CB, \CL,\theta, \CI_\af)$ is injective if and only if it is injection on the abelian core.
 
 An element $a \in C^*(\CB, \CL,\theta, \CI_\af)$ is called {\it normal} if $aa^*=a^*a$. We start with observing properties of the  standard generators of $C^*(\CB, \CL,\theta, \CI_\af)$.

\begin{prop}\label{normal and commute}Let $(\CB, \CL,\theta, \CI_\af)$ be a generalized Boolean dynamical system and let $\af, \bt \in \CL^*$ and $A \in \CI_\af \cap \CI_\bt$. The standard generator $s_{\af,A}s_{\bt,A}^*$ is normal  if one of the following holds:
\begin{enumerate}
\item $\af=\bt$;
\item $\af=\bt\gm$ and $(\gm,A)$ is a cycle with no exits;
\item $\bt=\af\gm$ and $(\gm,A)$ is a cycle with no exits. 
\end{enumerate}
\end{prop}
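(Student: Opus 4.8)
The plan is to reduce the normality of the standard generator to a single identity between range projections, and then to verify that identity in each of the three cases. Write $a=s_{\af,A}s_{\bt,A}^*$. The case of equal words in Remark \ref{computations of generators}(1) gives $s_{\bt,A}^*s_{\bt,A}=p_A$ and $s_{\af,A}^*s_{\af,A}=p_A$, so $p_A$ is the source projection of both $s_{\af,A}$ and $s_{\bt,A}$; in particular $s_{\af,A}p_A=s_{\af,A}$ and $s_{\bt,A}p_A=s_{\bt,A}$. Substituting these into $aa^*$ and $a^*a$ yields
\begin{align*}
aa^* &= s_{\af,A}s_{\bt,A}^*s_{\bt,A}s_{\af,A}^* = s_{\af,A}p_As_{\af,A}^* = s_{\af,A}s_{\af,A}^*, \\
a^*a &= s_{\bt,A}s_{\af,A}^*s_{\af,A}s_{\bt,A}^* = s_{\bt,A}p_As_{\bt,A}^* = s_{\bt,A}s_{\bt,A}^*.
\end{align*}
Hence $a$ is normal if and only if $s_{\af,A}s_{\af,A}^*=s_{\bt,A}s_{\bt,A}^*$, and it remains to establish this equality in the three cases. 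Case (1), $\af=\bt$, is immediate.

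The technical heart is the identity $s_{\gm,A}s_{\gm,A}^*=p_A$ for a cycle $(\gm,A)$ with no exits, $\gm=\gm_1\cdots\gm_n$. Since $\theta_\gm(A)=A$, the element $A$ lies in $\CI_\gm$, and choosing the source witness $B=\theta_{\gm_1}(A)$ one may write $s_{\gm,A}$ as the honest product $\prod_{t=1}^n s_{\gm_t,\theta_{\gm_{1,t}}(A)}$ (with $\theta_{\gm_{1,n}}(A)=A$). The no-exits hypothesis says precisely that each $\theta_{\gm_{1,t-1}}(A)$, where $\theta_{\gm_{1,0}}(A):=A$, is regular with $\Delta_{\theta_{\gm_{1,t-1}}(A)}=\{\gm_t\}$; so relation (iv) of Definition \ref{def:representation of GBDS} gives $p_{\theta_{\gm_{1,t-1}}(A)}=s_{\gm_t,\theta_{\gm_{1,t}}(A)}s_{\gm_t,\theta_{\gm_{1,t}}(A)}^*$ for each $t$. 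I would then collapse $s_{\gm,A}s_{\gm,A}^*$ from the inside out: a downward induction on $t$ shows that the tail product $R_t:=s_{\gm_t,\theta_{\gm_{1,t}}(A)}\cdots s_{\gm_n,A}$ satisfies $R_tR_t^*=p_{\theta_{\gm_{1,t-1}}(A)}$, each step using the Cuntz--Krieger relation above together with the fact that $p_{\theta_{\gm_{1,t}}(A)}$ is the source projection of $s_{\gm_t,\theta_{\gm_{1,t}}(A)}$. Taking $t=1$ gives $s_{\gm,A}s_{\gm,A}^*=p_A$.

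For case (2), with $\af=\bt\gm$, I would factor the generator as $s_{\bt\gm,A}=s_{\bt,C}s_{\gm,A}$, where $C=\theta_{\bt_2\cdots\bt_{|\bt|}}(B)$ for a witness $B\in\CI_{\bt_1}$ chosen, using $A\in\CI_\bt$ and the explicit description of $\CI_\bt$, so that $A\subseteq C$; monotonicity of $\theta_\gm$ and $\theta_\gm(A)=A$ then force $A\subseteq\theta_\gm(C)$, making this $B$ a legitimate witness for $s_{\bt\gm,A}$. Combining $s_{\gm,A}s_{\gm,A}^*=p_A$ from the previous step with the absorption rule $s_{\bt,C}p_A=s_{\bt,C\cap A}$ (which follows from relation (iii) and the range-projection computation of Remark \ref{computations of generators}(3), noting $A\in\CI_{\bt_{|\bt|}}$), we obtain
\begin{align*}
s_{\bt\gm,A}s_{\bt\gm,A}^* = s_{\bt,C}\,p_A\,s_{\bt,C}^* = s_{\bt,C\cap A}s_{\bt,C\cap A}^* = s_{\bt,A}s_{\bt,A}^*,
\end{align*}
the last equality because $C\cap A=A$. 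Case (3), $\bt=\af\gm$, is identical after interchanging the roles of $\af$ and $\bt$.

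The main obstacle I anticipate is the ideal-theoretic bookkeeping in the two cycle cases: justifying that the product expansion of $s_{\gm,A}$ and the factorization $s_{\bt\gm,A}=s_{\bt,C}s_{\gm,A}$ are legitimate (i.e.\ that every source object satisfies the required membership, such as $A\in\CI_\gm$ and $A\in\CI_{\bt_{|\bt|}}$, and that a witness $B$ can be chosen with $A\subseteq C$), and pinning down the absorption identity $s_{\bt,C}p_A=s_{\bt,C\cap A}$ together with $C\cap A=A$, which is exactly what forces the two range projections $s_{\bt\gm,A}s_{\bt\gm,A}^*$ and $s_{\bt,A}s_{\bt,A}^*$ to coincide.
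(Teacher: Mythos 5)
Your proof is correct and follows essentially the same route as the paper: both reduce normality to the equality of the range projections $s_{\af,A}s_{\af,A}^*$ and $s_{\bt,A}s_{\bt,A}^*$, and in the cycle cases both factor $s_{\bt\gm,A}$ through $s_{\bt,\cdot}$ and collapse the cycle part to $p_A$ via the cycle identity $\theta_\gm(A)=A$ and relation (iv) under the no-exits hypothesis. The only difference is that you spell out explicitly (the downward induction giving $s_{\gm,A}s_{\gm,A}^*=p_A$, and the witness/absorption bookkeeping) what the paper's short computation leaves implicit.
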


\begin{proof} Let $x=s_{\af,A}s_{\bt,A}^*$. Then $x^*x=s_{\bt,A}s_{\bt,A}^*$ and $xx^*=s_{\af,A}s_{\af,A}^*$. If $\af=\bt$, then clearly $x$ is normal.  Suppose that $\af=\bt\gm$ and $(\gm,A)$ is a cycle with no exits. Then, 
\begin{align*} 
xx^* &=s_{\bt\gm,A}s_{\bt\gm,A}^* \\
&= s_{\bt\gm,\theta_{\gm}(A)}s_{\bt\gm,\theta_{\gm}(A)}^* \\
&=s_{\bt,A}s_{\gm, \theta_{\gm}(A)}s_{\gm, \theta_{\gm}(A)}^*s_{\bt,A}^*\\
&=s_{\bt,A}p_A s_{\bt,A}^* \\
&=s_{\bt,A}s_{\bt,A}^* \\
&=x^*x.
\end{align*}Thus, $x$ is normal. The other case is analogous. 
\end{proof}

By  $D'$ we mean  the commutant of $D$ in $C^*(\CB, \CL,\theta, \CI_\af)$, that is, 
 $$D'=\{x \in C^*(\CB, \CL,\theta, \CI_\af): x (s_{\af,A}s_{\af,A}^*)=(s_{\af,A}s_{\af,A}^*) x ~\text{for all}~ \af \in \CL ~\text{and}~ A \in \CI_\af \}.$$

\begin{prop}\label{prop:abelian core} Let $(\CB, \CL, \theta, \CI_\af)$ be a generalized Boolean dynamical system. If we denote by 
  $M:=M(\CB, \CL,\theta, \CI_\af)$  the subalgebra of $C^*(\CB, \CL,\theta, \CI_\af)$ generated by standard generators $s_{\af,A}s_{\bt,A}^*$ that satisfy:
\begin{enumerate}
\item $\af=\bt$;
\item $\af=\bt\gm$ and $(\gm,A)$ is a cycle with no exits;
\item $\bt=\af\gm$ and $(\gm,A)$ is a cycle with no exits,
\end{enumerate} then, $M$ is commutative satisfying  $D \subseteq M \subseteq D'$.
\end{prop}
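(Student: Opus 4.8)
The plan is to reduce the entire statement to a single assertion about the generators. The inclusion $D\subseteq M$ is immediate, since the type-(1) generators $s_{\af,A}s_{\af,A}^{*}$ are precisely those that generate $D$. Moreover, once $M$ is known to be commutative, the inclusion $M\subseteq D'$ comes for free: every element of $M$ then commutes with every element of the subalgebra $D\subseteq M$. So the whole proposition reduces to showing that $M$ is commutative. Because the generating family is self-adjoint — the type-(1) generators are self-adjoint, while the families (2) and (3) are adjoints of one another — it suffices to prove that any two of the listed generators commute; the $*$-algebra they generate, and hence its closure $M$, is then commutative since multiplication is continuous.

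Next I would record the corner structure of a type-(2) generator $x=s_{\bt\gm,A}s_{\bt,A}^{*}$. As in the proof of Proposition \ref{normal and commute}, the no-exit hypothesis combined with the Cuntz–Krieger relation in Definition \ref{def:representation of GBDS}(iv) yields $s_{\gm,A}s_{\gm,A}^{*}=p_{A}=s_{\gm,A}^{*}s_{\gm,A}$, so that $u:=s_{\gm,A}$ is a unitary in the corner $p_{A}C^{*}(\CB,\CL,\theta,\CI_\af)p_{A}$ and $x=s_{\bt,A}\,u\,s_{\bt,A}^{*}$ with $xx^{*}=x^{*}x=s_{\bt,A}s_{\bt,A}^{*}=:q_{x}\in D$. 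Using relation (ii) and $\theta_{\gm}(A)=A$ one checks $q_{x}x=xq_{x}=x$, i.e. $x=q_{x}xq_{x}$. The same holds for type-(3) generators by taking adjoints, and type-(1) generators are themselves the diagonal projections $q_{x}$.

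The case analysis then proceeds as follows. Two type-(1) generators commute because $D$ is commutative (Remark \ref{computations of generators}(3),(4)). For a remaining pair $x,y$ I first test the diagonal projections $q_{x},q_{y}\in D$: if $q_{x}q_{y}=0$, then $xy=(q_{x}xq_{x})(q_{y}yq_{y})=q_{x}x(q_{x}q_{y})yq_{y}=0=yx$, so $x$ and $y$ commute trivially. If $q_{x}q_{y}\neq 0$, then Remark \ref{computations of generators}(1),(3) force the prefixes to be comparable, and restricting to the common corner I may assume the two cycles sit over a common nonempty set. Here the decisive point is the rigidity of no-exit cycles: if $(\gm,A)$ and $(\dt,B)$ are cycles with no exits and $A\cap B\neq\emptyset$, then every $\emptyset\neq C\subseteq A\cap B$ satisfies $\Delta_{C}=\{\gm_{1}\}=\{\dt_{1}\}$, and iterating along the cycles forces $\gm$ and $\dt$ to be powers of one common primitive cycle $\zeta$. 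Consequently the relevant generators all lie in the corner $p_{A\cap B}C^{*}(\CB,\CL,\theta,\CI_\af)p_{A\cap B}$ and are words in $D$ together with the single unitary $s_{\zeta,\cdot}$; such a $*$-algebra is commutative, so $xy=yx$.

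The main obstacle is exactly this overlapping case. Two difficulties must be handled: first, proving that two no-exit cycles meeting on a nonempty set are genuinely powers of a common primitive cycle (and, when the prefixes $\bt,\rho$ sit at different depths, that the intermediate word must be a beginning of $\zeta^{\infty}$, since the no-exit condition leaves no other valid path out of $A$); and second, bookkeeping the products $s_{\af,A}^{*}s_{\bt,B}$ through Remark \ref{computations of generators}(1) to verify the commutation once the prefixes and cycle words have been aligned. The type-(1)/type-(2) interaction is simply the degenerate instance of this analysis with one cycle empty, and is handled identically. Everything outside the overlapping case is a routine application of the relations in Definition \ref{def:representation of GBDS} and Remark \ref{computations of generators}.
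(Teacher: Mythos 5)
Your skeleton is reasonable and in places matches the paper's proof: the reduction of the whole proposition to pairwise commutation of generators (commutativity of $M$ plus $D\subseteq M$ does give $M\subseteq D'$), the identity $x=q_xxq_x$ with $q_x=xx^*=x^*x\in D$ coming from the no-exit computation of Proposition \ref{normal and commute}, and the disposal of the case $q_xq_y=0$ are all correct. The genuine gap is in the overlapping case, and it is exactly where the paper's long computations (cases (ii), (iv), (v)) do their work: your organizing claims are false as stated because they ignore the \emph{phase} of the two cycles. Take $\CB$ generated by two disjoint atoms $V,W$, $\CL=\{a\}$, and $\theta_a$ the swap $V\leftrightarrow W$. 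Then $(aa,V)$ and $(aa,W)$ are cycles with no exits, and $x=s_{aa,V}s_{\emptyset,V}^*$ (type (2) with $\bt=\emptyset$, $\gm=aa$, $A=V$) and $y=s_{aaa,W}s_{a,W}^*$ (type (2) with $\mu=a$, $\dt=aa$, $B=W$) are generators of $M$ with $q_x=q_y=p_V\neq 0$. Yet $A\cap B=V\cap W=\emptyset$: your corner $p_{A\cap B}C^*(\CB,\CL,\theta,\CI_\af)p_{A\cap B}$ is zero while $x,y$ are not; your rigidity statement ``every $\emptyset\neq C\subseteq A\cap B$ has $\Delta_C=\{\gm_1\}=\{\dt_1\}$'' is vacuous here; and the primitive root $\zeta=a$ of $\gm=\dt=a^2$ is not a cycle at all, since $\theta_a(V)=W\neq V$, so $s_{\zeta,\cdot}$ is not a unitary in any corner but a partial isometry moving $p_V$ to $p_W$. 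In general, when the two cycles sit at different depths ($\mu=\bt\mu'$ with $\mu'\neq\emptyset$), the correct comparison is between $\theta_{\mu'}(A)$ and $B$, not $A$ and $B$, and the second cycle word is a power of a \emph{rotation} $\gm_{(2)}\gm_{(1)}$ of the first, not a power of a common primitive word. Your in-phase rigidity lemma is true and can even be upgraded (the gcd power of $\zeta$ is a genuine common cycle over $A\cap B$, by a Euclidean-algorithm argument), but the out-of-phase case cannot be captured by ``words in $D$ and one corner unitary''; it requires precisely the alignment bookkeeping — $\mu'=\gm^k\gm_{(1)}$, $\dt=(\gm_{(2)}\gm_{(1)})^n$, and set identities such as $B\cap\theta_{\gm_{(1)}}(A)=\theta_{\gm_{(2)}\gm_{(1)}}(B)\cap\theta_{\gm_{(1)}}(A)$ — that constitutes the bulk of the paper's proof and that you defer as routine.

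A second, smaller issue: your reduction ``restrict to the common corner $q=q_xq_y$'' is only legitimate once you know $x$ and $y$ commute with $D$ (that is what gives $xy=qxyq$ and $yx=qyxq$), but you propose to obtain the type-(1)/type-(2) commutation as a ``degenerate instance'' of the same overlapping analysis. The paper avoids this circularity by proving the commutation with $D$ first (its cases (ii) and (iii), which use \cite[Lemma 2.2]{CaK3}) and only then treating the cycle-versus-cycle cases; your plan needs the same ordering, with the degenerate case handled by a direct computation rather than by the corner reduction it is meant to justify.
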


\begin{dfn} \label{dfn:abelian core} The subalgebra $M$ defined above is called {\it the abelian core} of $C^*(\CB, \CL,\theta, \CI_\af)$.
\end{dfn}

\begin{proof}[Proof of Proposition~\ref{prop:abelian core}] Clearly, we have $D \subseteq M$. To show that $M$ is commutative, it is enough to check the  following cases:
\begin{itemize}
\item[(i)] Elements of the form (1) commute; it is clear since $s_{\af,A}s_{\af,A}^* \in D$ for $\af \in \CL^*$ and $A \in \CI_\af$.
\end{itemize}
\begin{itemize}
\item[(ii)]  Elements of the form (1) commute with elements of the form (2); let $x=s_{\mu,B}s_{\mu,B}^*$ and $y=s_{\bt\gm,A}s_{\bt,A}^*$, where  $\mu, \bt, \gm \in \CL^*$, $B \in \CI_\mu$, $A \in \CI_\bt$ and   $(\gm,A)$ is a cycle with no exits. If  $\bt=\mu\bt'$, we have that
\begin{align*} 
xy &=s_{\mu,B}s_{\mu,B}^*s_{\bt,A}s_{\gm,A}s_{\bt,A}^* \\
&=s_{\mu,B}s_{\mu,B}^*s_{\mu\bt',A}s_{\gm,A}s_{\bt,A}^* \\
&=s_{\mu,B}s_{\bt',A \cap \theta_{\bt'}(B)}s_{\gm,A}s_{\bt,A}^* \\
&=s_{\mu\bt', A \cap \theta_{\bt'}(B)}s_{\gm,A}s_{\bt,A}^* \\
&=s_{\mu\bt'\gm, A \cap \theta_{\bt'\gm}(B)}s_{\bt,A}^*\\
&=s_{\mu\bt'\gm, A \cap \theta_{\bt'\gm}(B)}s_{\mu\bt',A \cap \theta_{\bt'\gm}(B)}^*,\\
\end{align*}
and that
\begin{align*} 
yx &=s_{\bt\gm,A}s_{\mu\bt',A}^*s_{\mu,B}s_{\mu,B}^* \\
 &=s_{\bt\gm,A}s_{\bt',A \cap  \theta_{\bt'}(B)}^*s_{\mu,B}^* \\
 &=s_{\bt\gm,A}s_{\mu\bt',A \cap  \theta_{\bt'}(B)}^* \\
 &=s_{\mu\bt'\gm,A\cap  \theta_{\bt'}(B)}s_{\mu\bt',A \cap  \theta_{\bt'}(B)}^*.\\
\end{align*}
Since $(\gm,A)$ is a cycle, we have 
$A \cap \theta_{\bt'}(B)= \theta_{\gm}(A \cap \theta_{\bt'}(B))=A \cap \theta_{\bt'\gm}(B)$. Thus, we have  $xy=yx$ if $\bt=\mu\bt'$.

If $\mu=\bt\mu'$ and $ \mu'=\gm^k\gm_{(1)}$ for some $ k \geq 0$ with $ \gm=\gm_{(1)}\gm_{(2)}$, then we have that 
\begin{align*} 
xy&=s_{\mu,B}s_{\bt\mu',B}^*s_{\bt,A}s_{\gm,A}s_{\bt,A}^* \\
&=s_{\mu,B}s_{\mu',B \cap \theta_{\mu'}(A)}^*s_{\gm,A}s_{\bt,A}^* \\
&=s_{\bt\gm^k\gm_{(1)},B}s_{\bt\gm^{k-1}\gm_{(1)},B \cap \theta_{\gm_{(1)}}(A)}^* \\
&=s_{\bt\gm^k\gm_{(1)},B \cap \theta_{\gm_{(1)}}(A)}\big(p_{B \cap \theta_{\gm_{(1)}}(A)}\big)s_{\bt\gm^{k-1}\gm_{(1)},B \cap \theta_{\gm_{(1)}}(A)}^* \\
&=s_{\bt\gm^{k+1}\gm_{(1)},  
\theta_{\gm_{(2)}\gm_{(1)}}(B) \cap \theta_{\gm_{(1)}}(A)}  s_{\bt\gm^{k}\gm_{(1)} ,\theta_{\gm_{(2)}\gm_{(1)}}(B) \cap \theta_{\gm_{(1)}}(A) }^*, \\
\end{align*}
where the last quality follows from $$ p_{B \cap \theta_{\gm_{(1)}}(A)} =s_{\gm_{(2)}\gm_{(1)}, \theta_{\gm_{(2)}\gm_{(1)}}(B) \cap \theta_{\gm_{(1)}}(A)}s_{\gm_{(2)}\gm_{(1)}, \theta_{\gm_{(2)}\gm_{(1)}}(B) \cap \theta_{\gm_{(1)}}(A)}^*,$$ 
and that
\begin{align*}
yx &=s_{\bt\gm,A}s_{\bt,A}^*s_{\bt\mu',B}s_{\mu,B}^* \\
 &=s_{\bt\gm,A}s_{\mu',B \cap \theta_{\mu'}(A)}s_{\mu,B}^* \\
 &=s_{\bt\gm\mu', B \cap \theta_{\mu'}(A)}s_{\mu,B}^* \\
 &=s_{\bt\gm^{k+1}\gm_{(1)}, B \cap \theta_{\gm_{(1)}}(A)}s_{\bt\gm^k\gm_{(1)},B \cap \theta_{\gm_{(1)}}(A) }^*. 
\end{align*}
Now, since $(\gm,A)$ is a cycle with no exit, we have $B \cap \theta_{\gm_{(1)}}(A)=\theta_{\gm_{(2)}\gm_{(1)}}(B \cap \theta_{\gm_{(1)}}(A))$ by \cite[Lemma 2.2]{CaK3}. So, 
\begin{align*}
B \cap \theta_{\gm_{(1)}}(A) 
&=\theta_{\gm_{(2)}\gm_{(1)}}(B)\cap \theta_{\gm_{(1)}\gm_{(2)}\gm_{(1)}}(A) \\
&=\theta_{\gm_{(2)}\gm_{(1)}}(B)\cap \theta_{\gm_{(1)}}(\theta_{\gm_{(1)}\gm_{(2)}}(A)) \\
&=\theta_{\gm_{(2)}\gm_{(1)}}(B)\cap \theta_{\gm_{(1)}}(A).
\end{align*}
Thus, $xy=yx$  if $\mu=\bt\mu'$ and $ \mu'=\gm^k\gm_{(1)}$ for some $k \geq 1$. 
When the case where $\mu=\bt\mu'$ and $\mu'$ is not a beginning of $\gm^\infty$, then $xy=0=yx$. 

Lastly, if $\mu$ and $\bt$ are not comparable, then $xy=0=yx$.
\end{itemize}
\begin{itemize}
\item[(iii)]  Elements of the form (1) commute with elements of the form (3); by (ii), we have $(s_{\mu,B}s_{\mu,B}^*)(s_{\bt\gm,A}s_{\bt,A}^*) = (s_{\bt\gm,A}s_{\bt,A}^*)(s_{\mu,B}s_{\mu,B}^*)$ for $\mu, \bt, \gm \in \CL^*$, $B \in \CI_\mu$, $A \in \CI_\bt$ and   $(\gm,A)$ is a cycle with no exits. Taking adjoint both sides, we have that
$$(s_{\bt,A}s_{\bt\gm,A}^*)(s_{\mu,B}s_{\mu,B}^*)=(s_{\mu,B}s_{\mu,B}^*)(s_{\bt,A}s_{\bt\gm,A}^*)$$ for $\mu, \bt, \gm \in \CL^*$, $B \in \CI_\mu$, $A \in \CI_\bt$ and   $(\gm,A)$ is a cycle with no exits.
\end{itemize}

\begin{itemize}
\item[(iv)]  Elements of the form (2) commute with elements of the form (3);  
let $x=s_{\af\gm,A}s_{\af, A}^*$ and $y=s_{\mu,B}s_{\mu\dt, B}^*$, where $\af,\gm, \mu, \dt \in \CL^*$, $A \in \CI_\af$, $B \in \CI_\mu$, and  $(\gm,A)$ and $(\dt,B)$ are cycles with no exits. 
If $\af=\mu\af'$ and $\af'=\dt\af''$,  then we have that
\begin{align*}
xy &= s_{\af\gm,A}s_{\af, A}^*s_{\mu,B}s_{\mu\dt, B}^* \\
&=s_{\af\gm,A}s_{\mu\af', A}^*s_{\mu,B}s_{\mu\dt, B}^* \\
&=s_{\af\gm,A}s_{\af', A \cap \theta_{\af'}(B)}^*s_{\mu\dt, B}^* \\
&=s_{\mu\dt\af''\gm,A}s_{\mu\dt\dt\af'', A \cap \theta_{\af''}(B)}^*,
\end{align*}
and that 
\begin{align*}
yx &=s_{\mu,B}s_{\mu\dt, B}^*s_{\af\gm,A}s_{\af, A}^*\\
&=s_{\mu,B}s_{\dt, B}^*s_{\mu, B}^*s_{\mu\af',A}s_{\gm,A}s_{\af, A}^*\\
&=s_{\mu,B}s_{\dt, B}^*s_{\af',A \cap \theta_{\af'}(B)}s_{\gm,A}s_{\af, A}^*\\
&=s_{\mu,B}s_{\dt, B}^*s_{\dt\af'',A \cap \theta_{\af'}(B)}s_{\gm,A}s_{\af, A}^*\\
&=s_{\mu,B}s_{\af'',A \cap \theta_{\af'}(B) \cap \theta_{\af''}(B)}s_{\gm,A}s_{\af, A}^*\\
&=s_{\mu\af''\gm , A \cap  \theta_{\af''}(B)}s_{\mu\dt\af'', A}^*.
\end{align*}
If $yx \neq 0$, then  $A \cap \theta_{\af''}(B) \neq \emptyset$, so, $\theta_{\af''}(B) \neq \emptyset$. Thus, $\af''=\dt^k\dt_{(1)}$ for some $k \geq 0$, where $\dt=\dt_{(1)}\dt_{(2)}$.
Also,  since $\theta_{\gm^m}(A \cap \theta_{\af''}(B))= A \cap \theta_{\af''\gm^m}(B) \neq \emptyset$ for each $m \geq 1$, we see that $\af''\gm^m$
is a beginning of $\dt^\infty$ for each $m \geq 1$.
Then we may assume that $\gm=(\dt_{(2)}\dt_{(1)})^n$ for some $n \geq 1$.
Hence, it follows that 
\begin{align*}
xy &=s_{\mu\dt\af''\gm,A}s_{\mu\dt\dt\af'', A \cap \theta_{\af''}(B)}^*\\
&=s_{\mu\dt\dt^k\dt_{(1)}(\dt_{(2)}\dt_{(1)})^n,A}s_{\mu\dt^2\dt^k\dt_{(1)}, A \cap \theta_{\dt_{(1)}}(B)}^*\\
&=s_{\mu\dt\dt^k\dt^{n}\dt_{(1)},A}s_{\mu\dt^2\dt^k\dt_{(1)}, A \cap \theta_{\dt_{(1)}}(B)}^*\\
&=s_{\mu\dt^{k+n+1}\dt_{(1)},A  \cap \theta_{\dt_{(1)}}(B)}s_{\mu\dt^{k+2}\dt_{(1)}, A \cap \theta_{\dt_{(1)}}(B)}^*,
\end{align*}
and that
\begin{align*}
yx &=s_{\mu\af''\gm , A \cap  \theta_{\af''}(B)}s_{\mu\dt\af'', A}^*\\
&=s_{\mu\dt^k\dt_{(1)}(\dt_{(2)}\dt_{(1)})^m , A \cap  \theta_{\dt_{(1)}}(B)}s_{\mu\dt\dt^k\dt_{(1)}, A}^* \\
&=s_{\mu\dt^k\dt^n\dt_{(1)}, A \cap  \theta_{\dt_{(1)}}(B)}s_{\mu\dt\dt^k\dt_{(1)}, A}^* \\
&=s_{\mu\dt^{k+n}\dt_{(1)}, A \cap  \theta_{\dt_{(1)}}(B)}s_{\mu\dt^{k+1}\dt_{(1)}, A}^* \\
&=s_{\mu\dt^{k+n}\dt_{(1)}, A \cap  \theta_{\dt_{(1)}}(B)}\big(p_{A \cap  \theta_{\dt_{(1)}}(B)}\big)s_{\mu\dt^{k+1}\dt_{(1)}, A}^* \\
&=s_{\mu\dt^{k+n+1}\dt_{(1)}, \theta_{\dt_{(2)}\dt_{(1)}}(A) \cap \theta_{\dt_{(1)}}(B) }  s_{\mu\dt^{k+2}\dt_{(1)}  ,   \theta_{\dt_{(2)}\dt_{(1)}}(A) \cap \theta_{\dt_{(1)}}(B) }^*,  
\end{align*}
where the last equality follows from 
$$p_{A \cap  \theta_{\dt_{(1)}}(B)}=s_{\dt_{(2)}\dt_{(1)}, \theta_{\dt_{(2)}\dt_{(1)}}(A) \cap \theta_{\dt_{(1)}}(B) }s_{\dt_{(2)}\dt_{(1)}, \theta_{\dt_{(2)}\dt_{(1)}}(A) \cap \theta_{\dt_{(1)}}(B) }^*.$$
Now, since $((\dt_{(2)}\dt_{(1)})^n,A)$ and $(\dt_{(1)}\dt_{(2)}, B)$ are cycles, we have 
\begin{align*} 
A \cap \theta_{\dt_{(1)}}(B)
&= \theta_{(\dt_{(2)}\dt_{(1)})^n}(A)  \cap \theta_{\dt_{(1)}}(B)\\
&=\theta_{(\dt_{(2)}\dt_{(1)})^{n-1}\dt_{(2)}\dt_{(1)}}(A)  \cap \theta_{\dt_{(1)}}(B)\\
&=\theta_{\dt_{(1)}}\big(\theta_{(\dt_{(2)}\dt_{(1)})^{n-1}\dt_{(2)}}(A)  \cap B \big)\\
&=\theta_{\dt_{(1)}}\big(\theta_{\dt_{(2)}(\dt_{(1)}\dt_{(2)})^{n-1}}(A)  \cap  \theta_{(\dt_{(1)}\dt_{(2)})^{n-1}}(B) \big)\\
&=\theta_{\dt_{(1)}}\big(  \theta_{(\dt_{(1)}\dt_{(2)})^{n-1}}(\theta_{\dt_{(2)}}(A) \cap B) \big)\\
&=\theta_{\dt_{(1)}}(  \theta_{\dt_{(2)}}(A) \cap B ) \\
&=\theta_{\dt_{(2)}\dt_{(1)}}(A) \cap B.
\end{align*}
Therefore, we have $xy=yx$ if $\af=\mu\af'$, $\af'=\dt\af''$ and $ \af''=\dt^k\dt_{(1)}$ for some $k \geq 0$. When $ \af''$ is not a beginning of $\dt^\infty$, then $xy=0=yx$.

If $\af=\mu\af'$, $\dt=\af'\dt'$ and $\dt'=\gm\dt''$ then 
\begin{align*}
yx &=s_{\mu,B}s_{\mu\dt, B}^*s_{\af\gm,A}s_{\af, A}^*\\
&=s_{\mu,B}s_{\mu\af'\gm\dt'', B}^*s_{\mu\af'\gm,A}s_{\af, A}^*\\
&=s_{\mu,B}s_{\dt'', B \cap \theta_{\dt''}(A)}^*s_{\af, A}^*\\
&=s_{\mu,B}s_{\af\dt'', B \cap \theta_{\dt''}(A)}^*.
\end{align*}
Here, if $yx \neq 0$, then $B \cap \theta_{\dt''}(A) \neq \emptyset $, so $\theta_{\dt''}(A) \neq \emptyset $. Thus, $\dt''=\gm^k\gm_{(1)}$ for some $k \geq 0$, where $\gm=\gm_{(1)}\gm_{(2)}$. Also, if $B \cap \theta_{\dt''}(A) \neq \emptyset $, then $\theta_{\dt^n}(B \cap \theta_{\dt''}(A) ) = B \cap  \theta_{\dt''\dt^n}(A) \neq \emptyset$ for all $n \geq 1$. So, $\dt''\dt^n$ is a beginning of $\gm^\infty$ for  each $n \geq 1$. From these facts, one can conclude that $\af'=\gm_{(2)}$, and hence, $\dt=\af'\dt'=\af'\gm\dt''=\gm_{(2)}\gm\gm^k\gm_{(1)}=\gm_{(2)}\gm^{k+1}\gm_{(1)}$. Thus, we have that
\begin{align*}
yx &=s_{\mu,B}s_{\af\dt'', B \cap \theta_{\dt''}(A)}^*\\
&=s_{\mu,B \cap \theta_{\gm_{(1)}}(A)}s_{\af \gm^k\gm_{(1)}, B \cap \theta_{\gm_{(1)}}(A)}^*\\
&=s_{\mu,B \cap \theta_{\gm_{(1)}}(A)}\big(p_{B \cap \theta_{\gm_{(1)}}(A)}\big)s_{\af \gm^k\gm_{(1)}, B \cap \theta_{\gm_{(1)}}(A)}^*\\
&=s_{\mu\gm_{(2)}\gm  , \theta_{\gm_{(2)}\gm}(B) \cap A}s_{\af \gm^{k+2}, \theta_{\gm_{(2)}\gm}(B) \cap A}^*
\end{align*}
where the last equality follows from 
$$p_{B \cap \theta_{\gm_{(1)}}(A)} =s_{\gm_{(2)}\gm, \theta_{\gm_{(2)}\gm}(B \cap \theta_{\gm_{(1)}}(A))}s_{\gm_{(2)}\gm, \theta_{\gm_{(2)}\gm}(B \cap \theta_{\gm_{(1)}}(A))}^*,$$
and that 
\begin{align*}
xy &=s_{\af\gm,A}s_{\mu\dt\af', A \cap \theta_{\af'}(B)}^*\\
&=s_{\mu\gm_{(2)}\gm,A}s_{\mu\gm_{(2)}\gm^{k+1}\gm_{(1)}\gm_{(2)}, A \cap \theta_{\gm_{(2)}}(B)}^* \\
&=s_{\mu\gm_{(2)}\gm, A \cap \theta_{\gm_{(2)}}(B)}s_{\mu\gm_{(2)}\gm^{k+2}, A \cap \theta_{\gm_{(2)}}(B)}^*.
\end{align*}
Now, since $(\gm,A)$ is a cycle, we have
\begin{align*} 
 A \cap \theta_{\gm_{(2)}\gm}(B) 
&=\theta_{\gm}(A) \cap \theta_{\gm_{(2)}\gm}(B) \\
&= \theta_{\gm}(A \cap \theta_{\gm_{(2)}}(B) \\
&= A \cap \theta_{\gm_{(2)}}(B).
\end{align*}
Thus, $xy=yx$ if  $\af=\mu\af'$, $\dt=\af'\dt'$,  $\dt'=\gm\dt''$ and $\dt''=\gm^k\gm_{(1)}$ for some $k \geq 0$. When $\dt''$ is not a beginning of $\gm^\infty$, then $xy=0=yx$.

If $\af=\mu\af'$, $\dt=\af'\dt'$ and $\gm=\dt'\gm'$, we have that
\begin{align*}
yx &=s_{\mu,B}s_{\mu\dt, B}^*s_{\af\gm,A}s_{\af, A}^*\\
&=s_{\mu,B}s_{\mu\af'\dt', B}^*s_{\mu\af'\dt'\gm',A}s_{\af, A}^*\\
&=s_{\mu,B}s_{\gm', A \cap \theta_{\gm'}(B)}s_{\af, A}^*\\
&=s_{\mu\gm', A \cap \theta_{\gm'}(B)}s_{\af, A}^*.
\end{align*}
If $yx \neq 0$, then  $A \cap \theta_{\gm'}(B) \neq \emptyset$, and hence, it must be that $\gm'=\dt^k\dt_{(1)}$ for some $k \geq 0$, where $\dt=\dt_{(1)}\dt_{(2)}$. So, $\gm=\dt'\dt^k\dt_{(1)}$. Also, since $\theta_{\gm^n}(A \cap \theta_{\gm'}(B)) =A \cap \theta_{\gm'\gm^n}(B) \neq \emptyset$ for each $n \geq 1$, we see that 
$\gm'\gm^n$ is a beginning of $\dt^\infty$ for each $n \geq 1$.
From this, we  may assume that $\dt'=\dt_{(2)}$, and hence, $\af'=\dt_{(1)}$ and $\gm=\dt_{(2)}\dt^k\dt_{(1)}$.
Thus, we have that
\begin{align*}
yx &=s_{\mu\gm', A \cap \theta_{\gm'}(B)}s_{\af, A}^* \\
&=s_{\mu\dt^k\dt_{(1)}, A \cap \theta_{\dt_{(1)}}(B)}s_{\mu\dt_{(1)}, A \cap \theta_{\dt_{(1)}}(B)}^* \\
&=s_{\mu\dt^k\dt_{(1)}, A \cap \theta_{\dt_{(1)}}(B)}\big(p_{A \cap \theta_{\dt_{(1)}}(B)}\big)s_{\mu\dt_{(1)}, A \cap \theta_{\dt_{(1)}}(B)}^* \\
&=s_{\mu\dt^{k+1}\dt_{(1)}, \theta_{\dt_{(2)}\dt_{(1)}}(A) \cap \theta_{\dt_{(1)}}(B)} s_{\mu\dt\dt_{(1)} ,\theta_{\dt_{(2)}\dt_{(1)}}(A) \cap \theta_{\dt_{(1)}}(B)}^*,
\end{align*}
where the last equality follows from $$p_{A \cap \theta_{\dt_{(1)}}(B)}=s_{\dt_{(2)}\dt_{(1)}, \theta_{\dt_{(2)}\dt_{(1)}}(A) \cap \theta_{\dt_{(1)}}(B)}s_{\dt_{(2)}\dt_{(1)}, \theta_{\dt_{(2)}\dt_{(1)}}(A) \cap \theta_{\dt_{(1)}}(B)}^*.$$
On the other hand,   we have that
\begin{align*}
xy &=s_{\af\gm,A}s_{\mu\dt\af', A \cap \theta_{\af'}(B)}^* \\
&=s_{\mu\dt_{(1)} \dt_{(2)}\dt^k\dt_{(1)},A \cap \theta_{\dt_{(1)}}(B)}s_{\mu\dt\dt_{(1)}, A \cap \theta_{\dt_{(1)}}(B)}^* \\
&=s_{\mu\dt^{k+1}\dt_{(1)},A \cap \theta_{\dt_{(1)}}(B)}s_{\mu\dt\dt_{(1)}, A \cap \theta_{\dt_{(1)}}(B)}^* 
\end{align*}
Now, since $(\gm, A)$ and $(\dt,B)$ are cycles, it follows that
\begin{align*} A \cap \theta_{\dt_{(1)}}(B) 
& = \theta_{\dt_{(2)}\dt^k\dt_{(1)}}(A) \cap \theta_{\dt_{(1)}}(B) \\
&= \theta_{\dt_{(1)}}( \theta_{\dt_{(2)}\dt^k}(A) \cap B) \\
&=\theta_{\dt_{(1)}}( \theta_{\dt_{(2)}\dt^k}(A) \cap \theta_{\dt^k}(B))\\
&=\theta_{\dt_{(1)}}( \theta_{\dt^k}( \theta_{\dt_{(2)}}(A) \cap B)) \\
&=\theta_{\dt_{(1)}}( \theta_{\dt_{(2)}}(A) \cap B) \\
&= \theta_{\dt_{(2)}\dt_{(1)}}(A) \cap \theta_{\dt_{(1)}}(B).
\end{align*}
Therefore, $xy=yx$ if $\af=\mu\af'$, $\dt=\af'\dt'$, $\gm=\dt'\gm'$ and $\gm'=\dt^k\dt_{(1)}$ for some $k \geq 0$. When $\gm'$ is not a beginning of $\dt^\infty$, then $xy=0=yx$.

By the same arguments, one can show that $xy=yx$ if $\mu=\af\mu'$.
If $\mu$ and $\af$ are not  comparable, then $xy=0=yx$.  
\end{itemize}

\begin{itemize}
\item[(v)]  Elements of the form (3) commute with elements of the form (3);  put $x=s_{\af,A}s_{\af\gm, A}^*$ and $y=s_{\mu,B}s_{\mu\dt, B}^*$,  where $\af,\gm, \mu, \dt \in \CL^*$, $A \in \CI_\af$, $B \in \CI_\mu$, and  $(\gm,A)$ and $(\dt,B)$ are cycles with no exits.   If $\mu=\af\mu'$ and $\gm=\mu'\gm'$, then we have that
\begin{align*}
xy&= s_{\af,A}s_{\af\gm, A}^*s_{\mu,B}s_{\mu\dt, B}^* \\
&=s_{\af,A}s_{\af\mu'\gm', A}^*s_{\af\mu',B}s_{\mu\dt, B}^* \\
&=s_{\af,A}s_{\gm', A \cap \theta_{\gm'}(B)}^*s_{\mu\dt, B}^* \\
&=s_{\af,A}s_{\mu\dt\gm', A \cap \theta_{\gm'}(B)}^*, 
\end{align*}
and that
\begin{align*}
yx&=s_{\mu,B}s_{\mu\dt, B}^*s_{\af,A}s_{\af\gm, A}^*\\
&=s_{\mu,B}s_{\af\mu'\dt, B}^*s_{\af,A}s_{\af\gm, A}^*\\
&=s_{\mu,B}s_{\mu'\dt, B \cap \theta_{\mu'\dt}(A)}^*s_{\af\gm, A}^*\\
&=s_{\mu,B}s_{\af\gm\mu'\dt, B \cap \theta_{\mu'\dt}(A)}^*\\
&=s_{\mu,B}s_{\af\gm\mu'\dt, B \cap \theta_{\mu'}(A)}^*,
\end{align*}
where the last equality follows by $$B \cap \theta_{\mu'\dt}(A)=\theta_\dt(B) \cap \theta_{\mu'\dt}(A)=\theta_\dt(B \cap \theta_{\mu'}(A))=B \cap \theta_{\mu'}(A).$$
If $xy \neq 0 $ and $yx \neq 0$, then $A \cap \theta_{\gm'}(B) \neq \emptyset$ and $B \cap \theta_{\mu'}(A) \neq \emptyset$. So, $ \theta_{\gm'}(B) \neq \emptyset$ and $ \theta_{\mu'}(A) \neq \emptyset$. Thus, 
\begin{align*} \gm'=\dt^k\dt_{(1)}  ~\text{and}~\mu'=\gm^l\gm_{(1)} ~\text{for some}~k,l \geq 0,
\end{align*}
 where $\dt=\dt_{(1)}\dt_{(2)}$ and $\gm=\gm_{(1)}\gm_{(2)}$. 
  We then have 
\begin{align}\label{888}
\gm=\mu'\gm'=\gm^l\gm_{(1)}\gm'.
\end{align}
Also, if $B \cap \theta_{\mu'}(A) \neq \emptyset$, then $\theta_{\dt^m}(B \cap \theta_{\mu'}(A) )=B \cap \theta_{\mu'\dt^m}(A) \neq \emptyset$ for all $m \geq 1$. Thus, 
$\mu'\dt^m$ is a beginning of $\gm^\infty$ for all $m \geq 1$. 

In (\ref{888}), if $l=1$, then $\gm_{(1)}=\gm'=\emptyset$. So,   $\mu'=\gm$ and $\mu=\af\gm$. Thus, we have that
\begin{align*}
xy&=s_{\af,A}s_{\mu\dt\gm', A \cap \theta_{\gm'}(B)}^* \\
&= s_{\af,A \cap B}s_{\af\gm\dt, A \cap B}^* 
\end{align*}
and 
\begin{align*}
yx &=s_{\af\gm,B}s_{\af\gm\dt, B \cap A}^*\\
&=s_{\af\gm, A \cap B}s_{\af\gm\dt, A \cap B}^*.
\end{align*}
So, $xy=yx$. 
In (\ref{888}),  if $l=0$, then $\mu'=\gm_{(1)}$ and $\gm'=\gm_{(2)}$. We then may assume that $\dt=(\gm_{(2)}\gm_{(1)})^n$ for some $n \geq 1$. 
It then follows that
\begin{align*}
&xy \\
&=s_{\af,A}s_{\mu\dt\gm', A \cap \theta_{\gm'}(B)}^* \\
&=s_{\af,A \cap \theta_{\gm_{(2)}}(B)}s_{\af\gm_{(1)}(\gm_{(2)}\gm_{(1)})^n  \gm_{(2)}, A \cap \theta_{\gm_{(2)}}(B)}^* \\
&=s_{\af,A \cap \theta_{\gm_{(2)}}(B)}\big(p_{A \cap \theta_{\gm_{(2)}}(B)}\big)s_{\af\gm^{n+1}, A \cap \theta_{\gm_{(2)}}(B)}^* \\
&=s_{\af,A \cap \theta_{\gm_{(2)}}(B)}\big(s_{\gm_{(1)}, \theta_{\gm(1)}(A \cap \theta_{\gm_{(2)}}(B))}s_{\gm_{(1)}, \theta_{\gm(1)}(A \cap \theta_{\gm_{(2)}}(B))}^*\big)s_{\af\gm^{n+1}, A \cap \theta_{\gm_{(2)}}(B)}^* \\
&=s_{\af\gm_{(1)},     \theta_{\gm(1)}(A ) \cap \theta_{\gm_{(2)}\gm_{(1)}}(B)}s_{\af\gm^{n+1}\gm_{(1)}, \theta_{\gm(1)}(A ) \cap \theta_{\gm_{(2)}\gm_{(1)}}(B)}^*,
\end{align*}
and that
\begin{align*}
yx &=s_{\mu,B}s_{\af\gm\mu'\dt, B \cap \theta_{\mu'}(A)}^*\\
 &=s_{\af\gm_{(1)}, B \cap \theta_{\gm_{(1)}}(A)}s_{\af\gm  \gm_{(1)} (\gm_{(2)}\gm_{(1)})^n, B \cap \theta_{\gm_{(1)}}(A)}^*\\
 &=s_{\af\gm_{(1)}, B \cap \theta_{\gm_{(1)}}(A)}s_{\af\gm^{n+1}\gm_{(1)}, B \cap \theta_{\gm_{(1)}}(A)}^*.
\end{align*}
Now, since $(\gm,A)$  and $((\gm_{(2)}\gm_{(1)})^n,B)$ are cycles, we have 
\begin{align*} 
B \cap \theta_{\gm_{(1)}}(A)
&=\theta_{(\gm_{(2)}\gm_{(1)})^n}(B) \cap \theta_{\gm_{(1)}}(A)\\
&=\theta_{(\gm_{(2)}\gm_{(1)})^{n-1}\gm_{(2)}\gm_{(1)}}(B) \cap \theta_{\gm_{(1)}}(A)\\
&=\theta_{\gm_{(1)}}(\theta_{(\gm_{(2)}\gm_{(1)})^{n-1}\gm_{(2)}}(B) \cap A)\\
&=\theta_{\gm_{(1)}}(\theta_{\gm_{(2)}(\gm_{(1)}\gm_{(2)})^{n-1}}(B) \cap \theta_{(\gm_{(1)}\gm_{(2)})^{n-1}}(A))\\
&=\theta_{\gm_{(1)}}(\theta_{(\gm_{(1)}\gm_{(2)})^{n-1}}(\theta_{\gm_{(2)}}(B) \cap A))\\
&=\theta_{\gm_{(1)}}(\theta_{\gm_{(2)}}(B) \cap A)\\
&=\theta_{\gm_{(2)}\gm_{(1)}}(B) \cap \theta_{\gm_{(1)}}(A).
\end{align*}
 Therefore,  $xy=yx$ if  $\mu=\af\mu'$ and $\gm=\mu'\gm'$.

If $\mu=\af\mu'$ and $\mu'=\gm\mu''$, then we have that
\begin{align*} 
xy &= s_{\af,A}s_{\af\gm,A}^*s_{\af\gm\mu'',B}s_{\mu\dt,B}^* \\
&= s_{\af,A}s_{\mu'',B \cap \theta_{\mu''}(A)}s_{\mu\dt,B}^* \\
&= s_{\af\mu'',B \cap \theta_{\mu''}(A)}s_{\mu\dt,B}^*, 
\end{align*}
and that 
\begin{align*}
yx &=s_{\mu,B}s_{\af\mu'\dt,B}^*s_{\af,A}s_{\af\gm,A}^* \\
&=s_{\mu,B}s_{\mu'\dt,B \cap \theta_{\mu'\dt}(A)}^*s_{\af\gm,A}^* \\
&=s_{\mu,B}s_{\af\gm\mu'\dt,B \cap \theta_{\mu'\dt}(A)}^* \\
&=s_{\mu,B}s_{\af\gm\mu'\dt,B \cap \theta_{\mu'}(A)}^*\\
&=s_{\mu,B}s_{\af\gm\mu'\dt,B \cap \theta_{\gm\mu''}(A)}^*\\
&=s_{\mu,B}s_{\af\gm\mu'\dt,B \cap \theta_{\mu''}(A)}^*
\end{align*}
since $(\dt,B)$ and $(\gm,A)$ are cycles. 
If $yx \neq 0$, then $B \cap \theta_{\mu''}(A) \neq \emptyset$, so $\theta_{\mu''}(A) \neq \emptyset$. Thus, $\mu''=\gm^k\gm_{(1)}$ for some $k \geq 0$, where $\gm=\gm_{(1)}\gm_{(2)}$.
Also, if $B \cap \theta_{\mu''}(A) \neq \emptyset$, then $\theta_{\dt^m}(B \cap \theta_{\mu''}(A) )=B \cap \theta_{\mu''\dt^m}(A) \neq \emptyset $ for each $m \geq 1$, and hence, $\mu''\dt^m$ is a beginning of $\gm^\infty$ for each $n \geq 1$.
We then may assume that $\dt=(\gm_{(2)}\gm_{(1)})^n$ for some $n \geq 1$. Hence, it follows that 
\begin{align*}
yx &=s_{\af\gm^{k+1}\gm_{(1)},B}s_{\af\gm^2\gm^k\gm_{(1)}(\gm_{(2)}\gm_{(1)})^n,B \cap \theta_{\mu''}(A)}^*  \\
&=s_{\af\gm^{k+1}\gm_{(1)},B \cap \theta_{\gm_{(1)}}(A)}s_{\af\gm^{k+n+2}\gm_{(1)},B \cap \theta_{\gm_{(1)}}(A)}^*, 
\end{align*}
and that
\begin{align*}
xy &= s_{\af\gm^k\gm_{(1)},B \cap \theta_{\gm_{(1)}}(A)}s_{\af\gm\gm^k\gm_{(1)} (\gm_{(2)}\gm_{(1)})^n,B}^* \\
&= s_{\af\gm^k\gm_{(1)},B \cap \theta_{\gm_{(1)}}(A)}s_{\af\gm^{k+1}\gm^n \gm_{(1)},B \cap \theta_{\gm_{(1)}}(A)}^* \\
&= s_{\af\gm^k\gm_{(1)},B \cap \theta_{\gm_{(1)}}(A)}\big(p_{B \cap \theta_{\gm_{(1)}}(A)}\big)s_{\af\gm^{k+1+n}\gm_{(1)},B \cap \theta_{\gm_{(1)}}(A)}^* \\
&= s_{\af\gm^{k+1}\gm_{(1)},\theta_{\gm_{(2)}\gm_{(1)}}(B) \cap \theta_{\gm_{(1)}}(A)} s_{\af\gm^{k+n+2} \gm_{(1)},\theta_{\gm_{(2)}\gm_{(1)}}(B) \cap \theta_{\gm_{(1)}}(A)}^*,
\end{align*}
where the last equality follows from $$p_{B \cap \theta_{\gm_{(1)}}(A)}= s_{\gm_{(2)}\gm_{(1)}, \theta_{\gm_{(2)}\gm_{(1)}}(B \cap \theta_{\gm_{(1)}}(A))}s_{\gm_{(2)}\gm_{(1)}, \theta_{\gm_{(2)}\gm_{(1)}}(B \cap \theta_{\gm_{(1)}}(A))}^*.$$
Now, since $(\gm,A)$ and $((\gm_{(2)}\gm_{(1)})^n,B)$ are cycles, one again can have that $$B \cap \theta_{\gm_{(1)}}(A)=\theta_{\gm_{(2)}\gm_{(1)}}(B) \cap \theta_{\gm_{(1)}}(A).$$
Therefore, $xy=yx$ if $\mu=\af\mu'$ and $\mu'=\gm\mu''$.

The similar arguments give that $xy=yx$ if $\af=\mu\af'$. If $\mu$ and $\af$ are not  comparable, then $xy=0=yx$.  

\end{itemize}

\begin{itemize}
\item[(vi)]  Elements of the form (2) commute with elements of the form (2); by (v), we have $(s_{\af,A}s_{\af\gm, A}^*)(s_{\mu,B}s_{\mu\dt, B}^*)=(s_{\mu,B}s_{\mu\dt, B}^*)(s_{\af,A}s_{\af\gm, A}^*)$  for $\af,\gm, \mu, \dt \in \CL^*$, $A \in \CI_\af$, $B \in \CI_\mu$, and  $(\gm,A)$ and $(\dt,B)$ are cycles with no exits. Taking adjoint both sides, we have that
$$(s_{\mu\dt, B}s_{\mu,B}^*)(s_{\af\gm, A}s_{\af,A}^*)=(s_{\af\gm, A}s_{\af,A}^*)(s_{\mu\dt, B}s_{\mu,B}^*)$$
for $\af,\gm, \mu, \dt \in \CL^*$, $A \in \CI_\af$, $B \in \CI_\mu$, and  $(\gm,A)$ and $(\dt,B)$ are cycles with no exits.
\end{itemize}
The calculations (ii) and (iii) also shows that $M \subseteq D'$. So, we are done.
\end{proof}

We now identify the abelian core $M$ with the $C^*$-algebra of the interior of the isotropy group bundle $\operatorname{Iso} (\mathbb{F} \ltimes_\varphi \partial E)$ of $\mathbb{F} \ltimes_\varphi \partial E$.
We first characterize the isotropy group bundle $\operatorname{Iso} (\mathbb{F} \ltimes_\varphi \partial E)=\{(t, \mu) \in\mathbb{F} \ltimes_\varphi \partial E : \varphi_{t^{-1}}(\mu)=\mu \}$.

\begin{lem}\label{char:isotropy} For $(t, \mu) \in \mathbb{F} \ltimes_\varphi \partial E$ such that $t \neq \emptyset$ and $\mu=(e^{\mu_i}_{\xi_i})$,  we have $(t, \mu) \in \operatorname{Iso} (\mathbb{F} \ltimes_\varphi \partial E)$ if and only if there exist $\dt, \gm \in \CL^*$ such that  $t = \dt\gm\dt^{-1}$ or $t=\dt \gm^{-1} \dt^{-1}$,  $\CP(\mu)=\dt \gm^\infty$ and  $e^{\gm_1}_{\xi_{|\dt|+1}} \cdots e^{\gm_{|\gm|}}_{\xi_{|\dt|+|\gm|}}$ is a loop at $d(e^{\dt_{|\dt|}}_{\xi_{|\dt|}})$ so that $\mu=e^{\dt_1}_{\xi_1} \cdots e^{\dt_{|\dt|}}_{\xi_{|\dt|}}(e^{\gm_1}_{\xi_{|\dt|+1}} \cdots e^{\gm_{|\gm|}}_{\xi_{|\dt|+|\gm|}})^{\infty} $.
\end{lem}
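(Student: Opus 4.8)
The plan is to unwind the definition $\operatorname{Iso}(\mathbb{F}\ltimes_\varphi\partial E)=\{(t,\mu):\varphi_{t^{-1}}(\mu)=\mu\}$ and read off the structure of a fixed point of $\varphi_{t^{-1}}$. First I would write $t=\af\bt^{-1}$ in reduced form with $\af,\bt\in\CW^*$; this is legitimate because $\mu\in U_t$ forces $U_t\neq\emptyset$. Then $\varphi_{t^{-1}}=\varphi_{\bt\af^{-1}}$ carries $U_{\af\bt^{-1}}$ to $U_{\bt\af^{-1}}$, and for $\mu=(e^{\mu_i}_{\xi_i})_{i=1}^N\in U_t$ the defining formula gives $\varphi_{t^{-1}}(\mu)=e^{\bt_1}_{\eta_1}\cdots e^{\bt_{|\bt|}}_{\eta_{|\bt|}}\,\mu_{[|\af|+1,N]}$, where the tail $\mu_{[|\af|+1,N]}$ is transported verbatim (edges and ultrafilters) while the $\eta_j$ are the recursively defined head ultrafilters. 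The observation driving the whole proof is that this tail reappears unchanged, starting at coordinate $|\bt|+1$.

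For the forward direction I would assume $\varphi_{t^{-1}}(\mu)=\mu$. A short coordinatewise argument rules out $|\mu|<\infty$ and also the case $|\af|=|\bt|$: in both, matching coordinate by coordinate forces $\af=\bt$, hence $t=\emptyset$, against the hypothesis. Thus $\mu$ is infinite and $|\af|\neq|\bt|$. Comparing the verbatim tail of $\varphi_{t^{-1}}(\mu)$ against $\mu$ yields $e^{\mu_{|\af|+1+j}}_{\xi_{|\af|+1+j}}=e^{\mu_{|\bt|+1+j}}_{\xi_{|\bt|+1+j}}$ for all $j\geq 0$, i.e. both the letters $\mu_i$ and the ultrafilters $\xi_i$ are periodic with period $p=\bigl||\af|-|\bt|\bigr|$ from coordinate $\min(|\af|,|\bt|)+1$ on. Combined with $\mu_i=\af_i$ for $i\leq|\af|$, this makes one of $\af,\bt$ a prefix of the other: if $|\bt|<|\af|$ write $\af=\bt\gm$ and put $\dt=\bt$, giving $t=\dt\gm\dt^{-1}$; if $|\bt|>|\af|$ write $\bt=\af\gm$ and put $\dt=\af$, giving $t=\dt\gm^{-1}\dt^{-1}$. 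In either case $p=|\gm|$ and $\CP(\mu)=\dt\gm^\infty$. Setting $\lambda:=e^{\gm_1}_{\xi_{|\dt|+1}}\cdots e^{\gm_{|\gm|}}_{\xi_{|\dt|+|\gm|}}$, the ultrafilter periodicity gives $\mu_{[|\dt|+1,\infty]}=\lambda^\infty$, so $\mu=e^{\dt_1}_{\xi_1}\cdots e^{\dt_{|\dt|}}_{\xi_{|\dt|}}\lambda^\infty$; composability of $\mu$ at the $\dt$--$\gm$ junction and at each repetition of $\lambda$ forces $r(\lambda)=d(\lambda)=d(e^{\dt_{|\dt|}}_{\xi_{|\dt|}})$, exhibiting $\lambda$ as a loop at that base point.

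For the converse I would assume $t=\dt\gm\dt^{-1}$ (resp. $t=\dt\gm^{-1}\dt^{-1}$), $\CP(\mu)=\dt\gm^\infty$, and $\lambda$ a loop with $\mu=e^{\dt_1}_{\xi_1}\cdots e^{\dt_{|\dt|}}_{\xi_{|\dt|}}\lambda^\infty$. First I would check $\mu\in U_t$: the prefix condition holds because $\CP(\mu)$ begins with the required word, and the ideal-meeting condition defining $U_t$ follows from $\lambda$ being a loop, equivalently (by Lemma \ref{char2:ultrafilter cycle}) from the associated $(\gm,\cdot)$ being an ultrafilter cycle, so that the relevant ultrafilter stays inside the relevant ideal. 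Then I would evaluate $\varphi_{t^{-1}}(\mu)$ from the formula: the transported tail matches $\mu$ by periodicity, and it remains to verify that the recursively defined head ultrafilters $\eta_1,\dots,\eta_{|\dt|}$ coincide with $\xi_1,\dots,\xi_{|\dt|}$. This I would do by running the recursion defining the $\eta_j$, using that $g_{(\cdot)\cdot}$ and $h_{[\cdot]\cdot}$ are mutually inverse together with the periodicity relations $\xi_{i+|\gm|}=\xi_i$ and the ultrafilter-cycle identity for $\lambda$, concluding $\varphi_{t^{-1}}(\mu)=\mu$.

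I expect the main obstacle to be precisely this converse computation that the recursively defined head ultrafilters $\eta_j$ return the original $\xi_j$: it requires threading the definitions of $g_{(\cdot)\cdot}$, $f_{\cdot[\cdot]}$ and $h_{[\cdot]\cdot}$ through the periodicity of $\mu$ and through Lemma \ref{char2:ultrafilter cycle}, while keeping the two conjugacy cases $t=\dt\gm\dt^{-1}$ and $t=\dt\gm^{-1}\dt^{-1}$ straight and treating the degenerate subcase $\dt=\emptyset$ (where $t$ is $\gm$ or $\gm^{-1}$). The forward direction, by contrast, is essentially forced once one exploits that the tail is transported unchanged.
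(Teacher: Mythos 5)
Your proposal is correct and takes essentially the same route as the paper: the forward direction unwinds the fixed-point equation $\varphi_{t^{-1}}(\mu)=\mu$ to force one of $\af,\bt$ to be a prefix of the other and to extract periodicity of both the letters and the ultrafilters, hence the loop, while the converse verifies that the recursively defined head ultrafilters reproduce the original $\xi_j$ via the mutual-inverse property of $g_{(\cdot)\cdot}$ and $h_{[\cdot]\cdot}$ combined with the loop identity. The only cosmetic differences are that the paper evaluates $\varphi_{t^{-1}}$ as the composition $\varphi_{\dt}\circ\varphi_{\gm}\circ\varphi_{\dt^{-1}}$ rather than from the defining formula in one shot, and your check that $\mu\in U_t$ is redundant since $(t,\mu)\in\mathbb{F}\ltimes_\varphi\partial E$ is a standing hypothesis.
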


\begin{proof} 
($\Rightarrow$) If $(t, \mu) \in \operatorname{Iso} (\mathbb{F} \ltimes_\varphi \partial E)$, then $\mu=\varphi_{t^{-1}}(\mu)$, where $t=\af\bt^{-1}$ for some $\af,\bt \in \CW^*$. So, we have that $\varphi_{\bt^{-1}}(\mu)=\varphi_{\af^{-1}}(\mu)$. If $|\af|< |\bt|$, then there exist $\gm \in \CL^*$ such that  $\bt=\af\gm$, $\CP(\mu)=\af\gm^{\infty}$ and $t=\af\gm^{-1}\af^{-1}$.  If $|\bt|<|\af|$, then there exist $\gm \in \CL^*$ such that $\af=\bt\gm$, $\CP(\mu)=\bt\gm^{\infty}$ and $t=\bt\gm\bt^{-1}$.
 So, one can conclude that there exist $\dt, \gm \in \CL^*$ such that  $t = \dt\gm\dt^{-1}$ or $t=\dt \gm^{-1} \dt^{-1}$ and  $\CP(\mu)=\dt \gm^\infty$. Now,  put $m=|\dt|$ and $k=|\gm|$ and say  
 $\mu=e^{\dt_1}_{\xi_1} \cdots e^{\dt_{m}}_{\xi_{m}}e^{\gm_1}_{\xi_{m+1}} \cdots e^{\gm_{k}}_{\xi_{m+k}} \cdots e^{\gm_1}_{\xi_{m+(n-1)k+1}} \cdots e^{\gm_{k}}_{\xi_{m+nk}}  \cdots $.

If $t=\dt\gm\dt^{-1}$, then $\mu=\theta_{\dt\gm^{-1}\dt^{-1}}(\mu)$, and hence, we have 
\begin{align}\label{base index}\xi_{m+i}=\xi_{m+(n-1)k+i}
\end{align}
for all $n \geq 2$ and  all $i=1, \cdots, k$.

From (\ref{base index}),  for each $n \geq 1$, we have that
$$d(e^{\gm_k}_{\xi_{m+nk}})=r(e^{\gm_1}_{\xi_{m+nk+1}})=r(e^{\gm_1}_{\xi_{m+1}})=d(e^{\dt_k}_{\xi_{m}}).$$
Thus,  $e^{\gm_1}_{\xi_{m+1}} \cdots e^{\gm_{k}}_{\xi_{m+k}}$ is a loop at $d(\xi_m)$ and $\mu=e^{\dt_1}_{\xi_1} \cdots e^{\dt_{m}}_{\xi_{m}}(e^{\gm_1}_{\xi_{m+1}} \cdots e^{\gm_{k}}_{\xi_{m+k}})^{\infty} $.

($\Leftarrow$) Say $\mu=e^{\dt_1}_{\xi_1} \cdots e^{\dt_{m}}_{\xi_{m}}(e^{\gm_1}_{\xi_{m+1}} \cdots e^{\gm_{k}}_{\xi_{m+k}})^{\infty}$, where $m=|\dt|$ and $k=|\gm|$.  First, for $t=\dt\gm^{-1}\dt^{-1}$, we show that $\mu=\varphi_{t^{-1}}(\mu)=\varphi_{\dt\gm\dt^{-1}}(\mu)$. We observe that 
$\varphi_{\dt^{-1}}(\mu)=(e^{\gm_1}_{\xi_{m+1}} \cdots e^{\gm_{k}}_{\xi_{m+k}})^{\infty}$, and that $\varphi_{\gm}(\varphi_{\dt^{-1}}(\mu))=e^{\gm_1}_{\eta_1} \cdots e^{\gm_k}_{\eta_k}(e^{\gm_1}_{\xi_{m+1}} \cdots e^{\gm_{k}}_{\xi_{m+k}})^{\infty}$, where 
\begin{align*} \eta_k &:=g_{(\gm_k)\emptyset}(h_{[\dt_{m}]\emptyset}(\xi_{m})) , \\
\eta_{k-1}&:=g_{(\gm_{k-1})\emptyset}(f_{\emptyset[\gm_k]}(\eta_k)), \\
\eta_{k-2}&:=g_{(\gm_{k-2})\emptyset}(f_{\emptyset[\gm_{k-1}]}(\eta_{k-1})), \\
& \vdots \\
\eta_2 &:= g_{(\gm_2)\emptyset}(f_{\emptyset[\gm_3]}(\eta_3)),\\
\eta_1&:= g_{(\gm_1)\emptyset}(f_{\emptyset[\gm_2]}(\eta_2)).
\end{align*}
 Since $d(e^{\gm_k}_{\eta_k})=d(e^{\gm_{k}}_{\xi_{m+k}})$, we have $\eta_k=\xi_{m+k}$. So, $e^{\gm_k}_{\eta_k}=e^{\gm_{k}}_{\xi_{m+k}}$. Then again, since  $d(e^{\gm_{k-1}}_{\eta_{k-1}})=d(e^{\gm_{k-1}}_{\xi_{m+k-1}})$, we have $e^{\gm_{k-1}}_{\eta_{k-1}}=e^{\gm_{k-1}}_{\xi_{m+k-1}}$.
Continuing this precess, we have $e^{\gm_1}_{\eta_1} \cdots e^{\gm_k}_{\eta_k}=e^{\gm_1}_{\xi_1} \cdots e^{\gm_k}_{\xi_k}$. Thus, $\varphi_{\gm}(\varphi_{\dt^{-1}}(\mu))=(e^{\gm_1}_{\xi_{m+1}} \cdots e^{\gm_{k}}_{\xi_{m+k}})^{\infty}$. 
Then, since $g_{(\dt_m)\emptyset}(h_{[\dt_{m}]\emptyset}(\xi_{m}))=\xi_{m}$, one can see that $\varphi_{\dt\gm\dt^{-1}}(\mu)=\varphi_{\dt}(\varphi_{\gm\dt^{-1}}(\mu))=\mu$. Thus,  $(t,\mu) \in  \operatorname{Iso} (\mathbb{F} \ltimes_\varphi \partial E)$.

For $t=\dt\gm\dt^{-1}$, a similar argument gives $\varphi_{\dt\gm^{-1}\dt^{-1}}(\mu)=\mu$. So, $(t,\mu) \in  \operatorname{Iso} (\mathbb{F} \ltimes_\varphi \partial E)$.
\end{proof}

\begin{prop}\label{char:abelian core} The abelian core $M$  is isomorphic to $C^*(\operatorname{Iso}(\mathbb{F} \ltimes_\varphi \partial E)^{\circ})$.
\end{prop}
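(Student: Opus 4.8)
The plan is to transport the whole statement through the isomorphism $\kp:C^*(\CB,\CL,\theta,\CI_\af)\to C^*(\mathbb{F}\ltimes_\varphi\partial E)$ of the previous subsection and to show that $\kp$ restricts to an isomorphism of $M$ onto $C^*(\operatorname{Iso}(\CG)^{\circ})$, where I abbreviate $\CG:=\mathbb{F}\ltimes_\varphi\partial E$. Since $\operatorname{Iso}(\CG)^{\circ}$ is an open étale subgroupoid of the ample groupoid $\CG$, it is itself ample, so $C^*(\operatorname{Iso}(\CG)^{\circ})$, regarded inside $C^*(\CG)$ via extension of functions by zero, equals $\overline{\operatorname{span}}\{1_V\}$ with $V$ ranging over the compact open bisections contained in $\operatorname{Iso}(\CG)^{\circ}$. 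Recalling that $\kp(s_{\af,A}s_{\bt,A}^*)=1_{\{\af\bt^{-1}\}\times\CN(\af,A)}$, and that by Propositions \ref{normal and commute} and \ref{prop:abelian core} the products and adjoints of the generators of $M$ are again generators of type (1), (2) or (3) (or zero), the linear span of those generators is a dense $*$-subalgebra of $M$. Hence $\kp(M)=\overline{\operatorname{span}}\{1_{V_{\af,\bt,A}}\}$, where $V_{\af,\bt,A}:=\{\af\bt^{-1}\}\times\CN(\af,A)$ runs over the generators of $M$, and the whole problem reduces to matching these two closed spans.

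For the inclusion $\kp(M)\subseteq C^*(\operatorname{Iso}(\CG)^{\circ})$ I would verify that each $V_{\af,\bt,A}$ is a compact open bisection contained in $\operatorname{Iso}(\CG)^{\circ}$. For type (1) we have $\af=\bt$, so $V_{\af,\af,A}=\{\emptyset\}\times\CN(\af,A)$ sits in the unit space $\partial E$, and since $\CG$ is étale, $\partial E\subseteq\operatorname{Iso}(\CG)^{\circ}$. For types (2) and (3), writing $t=\af\bt^{-1}=\bt\gm\bt^{-1}$ (respectively $\af\gm^{-1}\af^{-1}$), I would show that every $\mu\in\CN(\af,A)$ satisfies $\varphi_{t^{-1}}(\mu)=\mu$: the assumption that $(\gm,A)$ is a cycle with no exits means that each $\emptyset\neq B\subseteq\theta_{\gm_{1,i}}(A)$ is regular with $\Delta_B=\{\gm_{i+1}\}$, which forces $\mu$ to continue around the $\gm$-loop indefinitely, so that $\CP(\mu)=\bt\gm^\infty$ and $\mu$ has exactly the loop form described in Lemma \ref{char:isotropy}; hence $(t,\mu)\in\operatorname{Iso}(\CG)$. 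As $V_{\af,\bt,A}$ is open in $\CG$ and contained in $\operatorname{Iso}(\CG)$, it lies in $\operatorname{Iso}(\CG)^{\circ}$.

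For the reverse inclusion I would prove that the sets $V_{\af,\bt,A}$ coming from generators of $M$ form a basis of compact open bisections for $\operatorname{Iso}(\CG)^{\circ}$; disjointifying then writes every $1_V$ with $V\subseteq\operatorname{Iso}(\CG)^{\circ}$ a compact open bisection as a finite sum of such $1_{V_{\af,\bt,A}}$, giving $C^*(\operatorname{Iso}(\CG)^{\circ})\subseteq\kp(M)$. The unit space $\partial E$ is covered by the type-(1) cylinders $\{\emptyset\}\times\CN(\af,A)$, which form a basis of $\partial E$. For $(t,\mu)\in\operatorname{Iso}(\CG)^{\circ}$ with $t\neq\emptyset$, Lemma \ref{char:isotropy} gives $t=\dt\gm^{\pm1}\dt^{-1}$ and $\mu=\dt\,\ell^\infty$ with $\ell$ a loop; being an interior point, $(t,\mu)$ has a neighborhood $\{t\}\times\CN(\nu,A')\subseteq\operatorname{Iso}(\CG)$ with $\nu=\dt\gm^m$ a prefix of $\CP(\mu)$ and $A'$ in the ultrafilter at position $|\nu|$. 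The crux is that, $t$ being a fixed reduced word, Lemma \ref{char:isotropy} forces every $\mu'\in\CN(\nu,A')$ to satisfy $\CP(\mu')=\dt\gm^\infty$ with the same loop period; thus no path over $\CN(\nu,A')$ may branch to a different letter, become singular, or terminate, which (using Lemma \ref{char2:ultrafilter cycle} to produce a genuine cycle set $A'$) is precisely the assertion that $(\gm,A')$ is a cycle with no exits. Then $\{t\}\times\CN(\nu,A')=V_{\dt\gm^{m+1},\dt\gm^m,A'}$ (or its inverse) is a generator-cylinder of $M$, and shrinking $\nu$ and $A'$ produces arbitrarily small such neighborhoods, establishing the basis property.

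I expect the main obstacle to be this last step, namely converting the topological condition ``$(t,\mu)$ lies in the interior of $\operatorname{Iso}(\CG)$'' into the combinatorial ``no exits'' condition on the underlying cycle. The delicate direction is to show that an exit --- a subset $B$ with $\Delta_B\neq\{\gm_{i+1}\}$, or a singular $B$ --- would yield, in every cylinder neighborhood of $\mu$, a path $\mu'$ failing $\varphi_{t^{-1}}(\mu')=\mu'$, contradicting membership in $\operatorname{Iso}(\CG)^{\circ}$. Exhibiting such a $\mu'$ and organizing the regular/singular and finite/infinite cases is the technical heart, and it parallels the case analysis used in the continuity and surjectivity arguments of the previous section.
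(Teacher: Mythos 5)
Your proposal is correct and takes essentially the same route as the paper's own proof: transport everything through $\kp$, prove $\kp(M)\subseteq C^*(\operatorname{Iso}(\mathbb{F}\ltimes_\varphi\partial E)^{\circ})$ by showing each generator cylinder $\{\af\bt^{-1}\}\times\CN(\af,A)$ is an open subset of $\operatorname{Iso}(\mathbb{F}\ltimes_\varphi\partial E)$ (no exits forces the loop form, then Lemma \ref{char:isotropy} applies), and prove the reverse inclusion by converting ``basic compact open bisection contained in $\operatorname{Iso}(\mathbb{F}\ltimes_\varphi\partial E)^{\circ}$'' into ``the underlying pair is a cycle with no exits,'' so that the bisection is (equal to) a generator cylinder of $M$. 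The step you defer as the technical heart is exactly what the paper's proof carries out, and your sketched contradiction strategy (an exit yields a boundary path in the cylinder violating isotropy, with regular/singular case analysis) matches it --- the one understatement is your parenthetical crediting Lemma \ref{char2:ultrafilter cycle} with producing a genuine cycle set: establishing the cycle identity $B=\theta_{\gm}(B)$ for every $\emptyset\neq B\subseteq A\cap\theta_{\gm}(A)$ is a separate ultrafilter-and-path argument (the paper uses Lemma \ref{char:ultrafilter cycle} together with Lemmas \ref{char:isotropy} and \ref{char2:ultrafilter cycle}), not an immediate consequence.
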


\begin{proof} Let $\kp : C^*(\CB,\CL,\theta,\CI_\af)  \rightarrow C^*(\mathbb{F} \ltimes_\varphi \partial E)$ be an isomorphism given by 
$\kp(s_{\af,A}s_{\bt,A}^*)=1_{ \{\af\bt^{-1}\} \times \CN(\af,A)}$ for $\af, \bt \in \CL^*$ and $\emptyset \neq A \in \CI_\af \cap \CI_\bt$. We show that $$\kappa(M)=C^*(\operatorname{Iso}(\mathbb{F} \ltimes_\varphi \partial E)^{\circ}).$$
Since $D$  is isomorphic to $C_0(\partial E) (\cong C_0((\mathbb{F} \ltimes_\varphi \partial E)^{(0)}))$, one can see that $\kp(D) \subseteq C^*(\operatorname{Iso}(\mathbb{F} \ltimes_\varphi \partial E)^{\circ})$. Now, let $\af, \bt \in \CL^*$ and $\emptyset \neq A \in \CI_\af \cap \CI_\bt$ be such that $\af=\bt\gm$ and $(\gm,A)$ is a cycle with no exits. Note  that  $\{\af\bt^{-1}\} \times \CN(\af, A) \subseteq \mathbb{F} \ltimes_\varphi \partial E$.
We claim that $$ \{\af\bt^{-1}\} \times \CN(\af, A) \subseteq \operatorname{Iso}(\mathbb{F} \ltimes_\varphi \partial E)^{\circ}.$$
Choose $(t, \mu) \in \{\af\bt^{-1}\} \times \CN(\af, A)$ and say $\mu=(e^{\mu_i}_{\xi_i})$.  
Then $t=\af\bt^{-1}=\bt\gm\bt^{-1}$
and  $A \in \xi_{|\bt\gm|}$. Since $(\gm,A)$ is a cycle, we have $(\gm, h_{[\gm_{|\gm|}]\emptyset}(\xi_{|\bt\gm|})) $ is an ultrafilter cycle. 
So, $e^{\gm_1}_{\xi_{|\bt|+1}} \cdots  e^{\gm_{|\gm|}}_{\xi_{|\bt|+|\gm|}} $ is a loop by Lemma \ref{char2:ultrafilter cycle}. Also, since the cycle $(\gm,A)$ has no exit, the loop $e^{\gm_1}_{\xi_{|\bt|+1}} \cdots  e^{\gm_{|\gm|}}_{\xi_{|\bt|+|\gm|}} $ has no entrance (see the ``if'' part of the proof of \cite[Proposition 3.5]{CaK3}). Hence, $\mu =e^{\bt_1}_{\xi_1} \cdots e^{\bt_{|\bt|}}_{\xi_{|\bt|}}(e^{\gm_1}_{\xi_{|\bt|+1}} \cdots e^{\gm_{|\gm|}}_{\xi_{|\bt|+|\gm|}})^{\infty} $. So, $(t, \mu) \in \operatorname{Iso}(\mathbb{F} \ltimes_\varphi \partial E)$ by Lemma \ref{char:isotropy}.
Thus, $\{\af\bt^{-1}\} \times \CN(\af, A) \subseteq  \operatorname{Iso}(\mathbb{F} \ltimes_\varphi \partial E)$. 
On the other hand, since $\{\af\bt^{-1}\} \times \CN(\af, A)$ is  open  in $\mathbb{F} \ltimes_\varphi \partial E$, we have  $\{\af\bt^{-1}\} \times \CN(\af, A) \subseteq \operatorname{Iso}(\mathbb{F} \ltimes_\varphi \partial E)^{\circ}$. Hence, 
$ \{\af\bt^{-1}\} \times \CN(\af, A) \subseteq \operatorname{Iso}(\mathbb{F} \ltimes_\varphi \partial E)^{\circ}.$

 Hence, we have $
\kp(s_{\af,A}s_{\bt,A}^*)=1_{ \{\af\bt^{-1}\} \times \CN(\af,A)}  \in C_c(\operatorname{Iso}(\mathbb{F} \ltimes_\varphi \partial E)^{\circ})$.
By the same argument as above, one also can see that $\kp(s_{\bt,A}s_{\af,A}^*)=1_{\{\bt\af^{-1}\} \times \CN(\bt, A)} \in C_c(\operatorname{Iso}(\mathbb{F} \ltimes_\varphi \partial E)^{\circ}).$
Then, the continuity of $\kp$ implies that $\kp(M) \subseteq C^*(\operatorname{Iso}(\mathbb{F} \ltimes_\varphi \partial E)^{\circ})$.

  To prove that $C^*(\operatorname{Iso}(\mathbb{F} \ltimes_\varphi \partial E)^{\circ})  \subseteq \kp(M)$, we only need to show that $$1_{ \{\af\bt^{-1}\}\times \CN(\af,A)} \in \kp(M),$$
  where $\{\af\bt^{-1}\}\times \CN(\af,A)$  is a compact open bisection in $\mathbb{F} \ltimes_\varphi \partial E$ such that $$ \{\af\bt^{-1}\}\times \CN(\af,A) \subseteq \operatorname{Iso}(\mathbb{F} \ltimes_\varphi \partial E)^{\circ}.$$
Let $(t, \mu) \in \{\af\bt^{-1}\}\times \CN(\af,A) \subseteq \operatorname{Iso}(\mathbb{F} \ltimes_\varphi \partial E)^{\circ}$ and say $\mu=(e^{\mu_i}_{\xi_i})$. Then, by Lemma \ref{char:isotropy},
there exist $\dt, \gm \in \CL^*$ such that  $t = \dt\gm\dt^{-1}$ or $t=\dt \gm^{-1} \dt^{-1}$,  $\CP(\mu)=\dt \gm^\infty$ and  $e^{\gm_1}_{\xi_{|\dt|+1}} \cdots e^{\gm_{|\gm|}}_{\xi_{|\dt|+|\gm|}}$ is a loop at $d(e^{\dt_{|\dt|}}_{\xi_{|\dt|}})$ and $\mu=e^{\dt_1}_{\xi_1} \cdots e^{\dt_{|\dt|}}_{\xi_{|\dt|}}(e^{\gm_1}_{\xi_{|\dt|+1}} \cdots e^{\gm_{|\gm|}}_{\xi_{|\dt|+|\gm|}})^{\infty}$.
Put $\eta= d(e^{\dt_{|\dt|}}_{\xi_{|\dt|}})$. Then, since $(\gm, \eta) $ is an ultrafilter cycle by Lemma \ref{char2:ultrafilter cycle} and $A \in \eta$,  we have  $A \cap \theta_{\gm}(A) \neq \emptyset$ by Lemma \ref{char:ultrafilter cycle}. We claim that $(\gm, A \cap \theta_{\gm}(A))$ is a cycle with no exits. 
Let $\emptyset \neq B \subseteq A \cap \theta_{\gm}(A)$.
If  $B \setminus \theta_{\gm}(B) \neq \emptyset$, choose an ultrafilter $\chi \in \widehat{\CB}$ such that $B \setminus \theta_{\gm}(B) \in \chi$. Then, $B, A, \theta_{\gm}(A) \in \chi$.
Since $A \in \CI_\af$, we have $\theta_{\gm}(A) \in \CI_{\af\gm}$. Thus, 
 $\chi \cap \CI_{\af\gm} \neq \emptyset$. Notice here that we have either $\af=\dt$ or $\af=\dt\gm$. Put 
\begin{align*} \chi_{|\af\gm|} &:=g_{((\af\gm)_{|\af\gm|})\emptyset}(\chi) , \\
\chi_{|\af\gm|-1}&:=g_{((\af\gm)_{|\af\gm|-1})\emptyset}(f_{\emptyset[(\af\gm)_{|\af\gm|}]}(\chi_{|\af\gm|})), \\
\chi_{|\af\gm|-2}&:=g_{((\af\gm)_{|\af\gm|-2})\emptyset}(f_{\emptyset[(\af\gm)_{{|\af\gm|}-1}]}(\chi_{{|\af\gm|}-1})), \\
& \vdots \\
\chi_2 &:= g_{((\af\gm)_2)\emptyset}(f_{\emptyset[(\af\gm)_3]}(\chi_3)),\\
\chi_1&:= g_{((\af\gm)_1)\emptyset}(f_{\emptyset[(\af\gm)_2]}(\chi_2)).
\end{align*}
Then we have  a path $\nu \in \partial E$ such that 
$\nu_{[1,|\af\gm|]}=e^{\af_1}_{\chi_1} \cdots e^{\af_{|\af|}}_{\chi_{|\af|}}e^{\gm_1}_{\chi_{|\af|+1}} \cdots e^{\gm_{|\gm|}}_{\chi_{|\af\gm|}}$.
Note that $\nu \in U_{\af\bt^{-1}}$ since $A \in \chi_{|\af|} \cap \CI_{\bt}$. Thus,  we have   $(\af\bt^{-1}, \nu) \in {\{\af\bt^{-1}\}} \times \CN(\af,A) \subseteq \operatorname{Iso}(\mathbb{F} \ltimes_\varphi \partial E)^{\circ}$. 
Hence, by Lemma \ref{char:isotropy}, 
$e^{\gm_1}_{\chi_{|\af|+1}} \cdots e^{\gm_{|\gm|}}_{\chi_{|\af\gm|}}$ is a loop at $\chi$. So,  $(\gm, \chi )$ is an ultrafilter cycle. Then Lemma \ref{char:ultrafilter cycle} implies that $ \big(B \setminus \theta_{\gm}(B) \big)  \cap  \big(\theta_{\gm}(B \setminus \theta_{\gm}(B)) \big) \neq \emptyset$. But, 
 $$ \big(B \setminus \theta_{\gm}(B) \big)  \cap  \big(\theta_{\gm}(B \setminus \theta_{\gm}(B))\big) =\big(B \setminus \theta_{\gm}(B) \big)  \cap  \big(\theta_{\gm}(B) \setminus \theta_{\gm^2}(B) \big) =\emptyset,$$
 a contradiction. 
Therefore, $B \setminus \theta_{\gm}(B)=\emptyset$, and hence, $ B \subseteq \theta_{\gm}(B)$. 

Now, if $ \theta_{\gm}(B) \setminus B \neq \emptyset$, then choose an ultrafilter $\chi \in \widehat{\CB}$ such that $ \theta_{\gm}(B) \setminus B \in \chi$. Then  the same argument as above implies that  $(\gm, \chi )$ is an ultrafilter cycle. Then again Lemma \ref{char:ultrafilter cycle} implies that $ \big( \theta_\gm(B) \setminus B \big) \cap \big( \theta_{\gm}(\theta_\gm(B) \setminus B) \big) \neq \emptyset$. But, 
$$\big( \theta_\gm(B) \setminus B \big) \cap \big( \theta_{\gm}(\theta_\gm(B) \setminus B) \big) = \big( \theta_\gm(B) \setminus B \big) \cap \big( \theta_{\gm^2}(B) \setminus \theta_\gm(B) \big) =\emptyset,$$
a contradiciton. Thus, $\theta_{\gm}(B) \setminus B =\emptyset$, so we have $B =\theta_{\gm}(B)$. Therefore, $(\gm, A \cap \theta_{\gm}(A))$ is a cycle.  

If $(\gm, A \cap \theta_{\gm}(A))$ has an exit, then there exist $0 \leq k \leq |\gm|$ and $\emptyset \neq C \in \CB$  such that $C \subseteq \theta_{\gm_1 \cdots \gm_k}(A \cap \theta_{\gm}(A))$  and $\Delta_C \neq \{\gm_{k+1}\}$.
If $\Delta_C =\emptyset$, choose $\chi \in \widehat{\CB}$ such that $C \in \chi$.
Since $\theta_{\gm\gm_1 \cdots \gm_k}(A) \in \chi$ and $A \in \CI_\af$, 
we have  $\chi \cap \CI_{\af\gm\gm_1 \cdots \gm_k} \neq \emptyset$. Consider a path $\nu$ such that 
$\CP(\nu)_{1, |\af\gm\gm_1 \cdots \gm_k|}=\af\gm\gm_1 \cdots \gm_k$ and $d(\nu_{[1,|\af\gm\gm_1 \cdots \gm_k|]})=\chi$. Claim that $\chi \in E^0_{sg}$. We show that $D \notin \CB_{reg}$ for all $D \in \chi$. If not, there is $D \in \chi$ such that $D \in \CB_{reg}$. Since $C \in \chi$ with $\Delta_C=\emptyset$, we have $D \cap C \in \chi$ and $\Delta_{D \cap C}=\emptyset$. This contradicts to $D \in \CB_{reg}$.  So, $D \notin \CB_{reg}$ for all $D \in \chi$. Hence, $\chi \in E^0_{sg}$ by \cite[Lemma 7.9]{CasK1}.
On the other hand, one can see that $(\af\bt^{-1}, \nu) \in  \{\af\bt^{-1}\}\times \CN(\af,A) \subseteq \operatorname{Iso}(\mathbb{F} \ltimes_\varphi \partial E)^{\circ}$. So, it must be that $\CP(\nu)=\dt\gm^\infty$ for some $\dt \in \CL^*$, this is not the case. 

If $b \in \Delta_C \setminus \{\gm_{k+1}\}$, choose $\chi \in \widehat{\CB}$ such that $\emptyset \neq \theta_b(C) \in \chi$.
One then have a path $\nu$ such that $\CP(\nu)_{1, |\af\gm\gm_1 \cdots \gm_k b|}=\af\gm\gm_1 \cdots \gm_k b$. But, since $(\af\bt^{-1}, \nu) \in  \{\af\bt^{-1}\}\times \CN(\af,A) \subseteq \operatorname{Iso}(\mathbb{F} \ltimes_\varphi \partial E)^{\circ} $, we have $\CP(\nu)=\dt\gm^\infty$ for some $\dt \in \CL^*$, this is not the case. Therefore, $(\gm, A \cap \theta_{\gm}(A))$ has no exits. 

Now, we see that 
\begin{align*}
&\kp(s_{\af, A \cap \theta_{\gm}(A)}s_{\bt,A \cap \theta_{\gm}(A)}^*)\\
&=1_{ \{\af\bt^{-1}\}\times \CN(\af, A \cap \theta_{\gm}(A))}\\
&= 1_{ \{\af\bt^{-1}\}\times \CN(\af,A)}, 
\end{align*}
where $\af=\bt\gm$ or $\bt=\af\gm$ and $(\gm, A \cap \theta_{\gm}(A))$ is a cycle with no exit. 
Hence, $1_{ \{\af\bt^{-1}\}\times \CN(\af,A)} \in \kp(M).$ Therefore, $C^*(\operatorname{Iso}(\mathbb{F} \ltimes_\varphi \partial E)^{\circ})  \subseteq \kp(M)$.
\end{proof}

We now state our main result. It is a generalization of \cite[Theorem 3.12]{NR2012} and  \cite[Theorem 6.11]{CGW2020}.

\begin{thm} \label{GUT}Let $(\CB, \CL,\theta, \CI_\af)$ be a generalized Boolean dynamical system such that $\CB$ and $\CL$ are countable. If $\pi: C^*(\CB,\CL, \theta, \CI_\af) \to A$ is a $*$-homomorphism into a $C^*$-algebra $A$, then $\pi$ is injective if and only if the restriction of $\pi$ to $M$ is injective. 
\end{thm}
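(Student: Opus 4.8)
The plan is to reduce the statement to a known uniqueness theorem for groupoid $C^*$-algebras by transporting everything through the groupoid model $\mathbb{F} \ltimes_\varphi \partial E$.

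The forward implication is immediate: since $M$ is a $C^*$-subalgebra of $C^*(\CB, \CL, \theta, \CI_\af)$, injectivity of $\pi$ on the whole algebra forces injectivity of its restriction $\pi|_M$. For the converse, I would work with the isomorphism $\kp \colon C^*(\CB, \CL, \theta, \CI_\af) \to C^*(\mathbb{F} \ltimes_\varphi \partial E)$ recalled above and set $\CG := \mathbb{F} \ltimes_\varphi \partial E$. By Proposition~\ref{char:abelian core} we have $\kp(M) = C^*(\operatorname{Iso}(\CG)^{\circ})$. Consider $\pi' := \pi \circ \kp^{-1} \colon C^*(\CG) \to A$. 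Under this identification, the hypothesis that $\pi|_M$ is injective translates precisely into the statement that the restriction of $\pi'$ to $C^*(\operatorname{Iso}(\CG)^{\circ})$ is injective, because $\kp$ restricts to an isomorphism $M \to \kp(M)$, so $\pi$ is injective on $M$ if and only if $\pi'$ is injective on $\kp(M) = C^*(\operatorname{Iso}(\CG)^{\circ})$.

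The remaining task is to verify that $\CG$ meets the hypotheses of \cite[Theorem 3.1(b)]{BNRSW}. As recalled, $\CG$ is a locally compact Hausdorff ample (in particular, \'etale) groupoid, and the Remark noting second countability guarantees that $\CG$ is second countable precisely because $\CB$ and $\CL$ are countable. Hence \cite[Theorem 3.1(b)]{BNRSW} applies and yields that $\pi'$ is injective if and only if $\pi'|_{C^*(\operatorname{Iso}(\CG)^{\circ})}$ is injective. Combining this with the previous paragraph, injectivity of $\pi|_M$ forces injectivity of $\pi'$, and therefore injectivity of $\pi = \pi' \circ \kp$.

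The substance of the argument has already been absorbed into Proposition~\ref{char:abelian core}, which identifies $M$ with $C^*(\operatorname{Iso}(\CG)^{\circ})$; once that identification is in hand, the theorem follows by a direct translation together with an appeal to the groupoid uniqueness theorem. Accordingly, the only genuine point requiring care is the second countability of $\CG$, which is exactly the reason for imposing countability of $\CB$ and $\CL$, since \cite[Theorem 3.1(b)]{BNRSW} is stated only for second countable groupoids.
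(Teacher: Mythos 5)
Your proposal is correct and follows exactly the paper's argument: the paper proves this theorem by citing Proposition~\ref{char:abelian core} together with \cite[Theorem 3.1(b)]{BNRSW}, and your write-up simply spells out the details of that same reduction (transporting $\pi$ through $\kp$, matching $\kp(M)$ with $C^*(\operatorname{Iso}(\mathbb{F} \ltimes_\varphi \partial E)^{\circ})$, and checking second countability from the countability of $\CB$ and $\CL$).
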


\begin{proof} It follows by Proposition \ref{char:abelian core} and \cite[Theorem 3.1(b)]{BNRSW}.
\end{proof}

\begin{remark}If $(\CB, \CL,\theta)$ satisfies Condition (L), then there are no cycle with no exits, so we immediately have the usual Cuntz--Krieger uniqueness theorem given in \cite[Theorem 3.6]{CaK3}.
\end{remark}


We conclude this section by observing that if the underlying Boolean dynamical system $(\CB, \CL, \theta)$ satisfies Condition $(L)$, then   the diagonal subalgebra is  a maximal abelian subalgebra (MASA), and it coincide with  the abelian core. As a result, the abelian core $M$ is a MASA in $C^*(\CB, \CL, \theta, \CI_\af)$ if $(\CB, \CL, \theta)$ satisfies Condition $(L)$.

To begin, we describe Condition (L) in terms of the groupoid $\mathbb{F} \ltimes_\varphi   \partial E$  and the partial action $\Phi=(\{U_t\}_{t \in G}, \{\varphi_t\}_{t \in G})$.
Recall that  the partial action $\Phi$ is called {\it topologically free} (\cite[Definition 6.2]{CW2020}) if the set of fixed point $\operatorname{Fix}(t):=\{\mu \in \partial E: \mu \in U_{t^{-1}} ~\text{and}~ \varphi_t(\mu)=\mu\}$ has empty interior for all $t \in \mathbb{F} \setminus \{ \emptyset \}$.

\begin{prop}\label{equiv:Condition (L)} Let $(\CB, \CL,\theta, \CI_\af)$ be a generalized Boolean dynamical system. Consider the following:
\begin{enumerate}
\item $(\CB, \CL,\theta)$ satisfies Condition (L).
\item  $\Phi$ is topologically free.
\item   $\mathbb{F} \ltimes_\varphi \partial E$ is topologically principal.
\item   $\mathbb{F} \ltimes_\varphi \partial E$ is effective
\end{enumerate}
Then, we have (1)$\iff$(2)$\iff$(3)$\implies$(4). Moreover, if $\CB$ and $\CL$ are countable, we have (4)$\implies$(3).
\end{prop}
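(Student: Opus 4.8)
The plan is to establish the cycle of implications $(1)\Rightarrow(3)\Rightarrow(2)\Rightarrow(1)$ and then to treat $(3)\Leftrightarrow(4)$ separately. The conceptual dictionary driving everything is Lemma \ref{char:isotropy}: a unit $\mu=(e^{\mu_i}_{\xi_i})\in\partial E$ has nontrivial isotropy in $\mathbb{F}\ltimes_\varphi\partial E$ precisely when $\mu$ is an infinite path of the form $\mu=\dt\nu^\infty$ with $\nu$ a loop; equivalently, $\mu$ has trivial isotropy iff it is either a finite path or an infinite path that is not eventually a repeated loop. Since $\operatorname{Fix}(t)\neq\emptyset$ forces $t$ to be conjugate to a loop word, topological freeness of $\Phi$ is exactly the assertion that no nonempty open subset of $\partial E$ consists solely of such eventually-periodic paths.

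For $(2)\Rightarrow(1)$ I would argue contrapositively. If Condition (L) fails there is a cycle $(\gm,A)$ with no exits, and then the computation in the proof of Proposition \ref{char:abelian core} (specialized to $\bt=\emptyset$, $\af=\gm$) shows that the nonempty compact-open set $\CN(\gm,A)$ satisfies $\{\gm\}\times\CN(\gm,A)\subseteq\operatorname{Iso}(\mathbb{F}\ltimes_\varphi\partial E)$; equivalently $\CN(\gm,A)\subseteq\operatorname{Fix}(\gm^{-1})$. Thus $\operatorname{Fix}(\gm^{-1})$ has nonempty interior and $\Phi$ is not topologically free. The implication $(3)\Rightarrow(2)$ is the general étale fact: if some $\operatorname{Fix}(t)$ with $t\neq\emptyset$ had nonempty interior $V$, then every unit in the nonempty open set $V$ would carry the nontrivial isotropy element $(t,\cdot)$, contradicting density of the units with trivial isotropy.

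The heart of the proposition, and the step I expect to be the main obstacle, is $(1)\Rightarrow(3)$: assuming Condition (L), I must produce a dense set of boundary paths with trivial isotropy. After reducing to a basic nonempty open set—which, after shrinking, I may take of the form $\CN(\af,A)\setminus\bigcup_{i=1}^m\CN(\bt_i,B_i)$—the task is to build a boundary path extending $\af$ through $A$, avoiding the finitely many excluded cylinders, that is either finite or not eventually periodic. The construction proceeds by induction on the length: at each stage the current endpoint is either singular, in which case we stop with a finite path (automatically of trivial isotropy by Lemma \ref{char:isotropy}), or regular, in which case we extend by at least one edge. Condition (L), together with the correspondence between cycles with no exits and loops without entrances (Lemma \ref{char2:ultrafilter cycle} and the ``if'' part of the proof of \cite[Proposition 3.5]{CaK3}), guarantees that we are never forced to repeat a single loop forever: a path trapped in an exitless loop would exhibit a cycle with no exits. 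Exploiting the available exits, one arranges the deviations so that the resulting path is not eventually periodic. I emphasize that this argument is purely constructive (dependent choice) and uses no Baire-category or countability input, which is why $(1)\Leftrightarrow(2)\Leftrightarrow(3)$ holds without assuming $\CB$ and $\CL$ countable; the delicate point is to diagonalize against all candidate periods while staying inside the prescribed open set.

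Finally, for $(3)\Rightarrow(4)$ I would use that $\mathbb{F}\ltimes_\varphi\partial E$ is an ample Hausdorff groupoid: if it were not effective, then $\operatorname{Iso}(\mathbb{F}\ltimes_\varphi\partial E)^{\circ}\setminus\partial E$ would be a nonempty open set, hence would contain a compact-open bisection $V$ of non-unit isotropy elements, and $r(V)$ would be a nonempty open set of units all having nontrivial isotropy—contradicting topological principality. The remaining implication $(4)\Rightarrow(3)$ is where countability enters: when $\CB$ and $\CL$ are countable the groupoid is second countable (the remark following the construction of $\mathbb{F}\ltimes_\varphi\partial E$), and for a second-countable Hausdorff étale groupoid effectiveness implies topological principality by the standard Renault-type result, which completes the equivalence.
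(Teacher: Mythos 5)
Your proofs of (2)$\Rightarrow$(1), (3)$\Rightarrow$(2) and (3)$\Leftrightarrow$(4) are correct: the first rests legitimately on the first half of the proof of Proposition \ref{char:abelian core} (with $\bt=\emptyset$), and your direct compact-open-bisection argument for (3)$\Rightarrow$(4) is a clean substitute for the paper's appeal to Renault. The fatal problem is your (1)$\Rightarrow$(3), whose entire weight rests on the claim that ``a path trapped in an exitless loop would exhibit a cycle with no exits.'' That claim is false. Loops, and the ultrafilter cycles of Lemmas \ref{char:ultrafilter cycle} and \ref{char2:ultrafilter cycle}, live on the Stone dual $\widehat{\CB}$, whereas a cycle $(\gm,A)$ with no exits is a condition on \emph{every} $B\subseteq A$ in $\CB$; a single trapped path hands you one ultrafilter cycle, which is strictly weaker.

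Concretely, let $\CB$ be the finite and cofinite subsets of $\N$, $\CL=\{a\}$, $\theta_a(A)=\{n:n+1\in A\}$, $\CI_a=\CB$. No pair $(a^k,B)$ is a cycle (a singleton $\{m\}\subseteq B$ has $\theta_{a^k}(\{m\})\neq\{m\}$), so Condition (L) holds vacuously. Yet the ultrafilter $\eta_\infty$ of cofinite sets satisfies $\theta_a(A)\in\eta_\infty$ for all $A\in\eta_\infty$, so $(a,\eta_\infty)$ is an ultrafilter cycle, the edge $e^a_{\eta_\infty}$ is a loop without entrances based at a \emph{regular} vertex, and the boundary path $x=(e^a_{\eta_\infty})^\infty$ is exactly a path forced to repeat a single loop forever, with nontrivial isotropy $(a,x)\in\operatorname{Iso}(\mathbb{F}\ltimes_\varphi\partial E)$. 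Topological principality does still hold here, but only because $x$ is a limit of the finite boundary paths $p_m=e^a_{\eta_{m-1}}\cdots e^a_{\eta_0}$; your edge-by-edge construction, if it ever selects the ultrafilter $\eta_\infty$ (nothing in your scheme forbids this, since choosing an edge in this setting means choosing an ultrafilter, a point your sketch never addresses), gets stuck with no exit, no singular vertex to stop at, and no contradiction with Condition (L) to fall back on. This is precisely why the paper never argues from a single trapped path: in its proof of (1)$\Rightarrow$(2) (by contraposition) and in Proposition \ref{char:abelian core}, the hypothesis is always that a whole cylinder $\CN(\af,A)$ consists of trapped paths, which produces ultrafilter cycles $(\gm,\chi)$ for \emph{every} ultrafilter $\chi$ containing the relevant sets, and only this abundance allows one to verify $B=\theta_\gm(B)$ for all $B$ and manufacture an honest cycle with no exits; density of trivial-isotropy points is then obtained from (2) by a Baire-category argument, not by construction. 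Consequently your central step collapses, and with it your side-claim of avoiding countability (note that the Baire argument over all $t\in\mathbb{F}\setminus\{\emptyset\}$ is in fact the one place where countability of $\CL$ enters even the paper's proof of (2)$\Rightarrow$(3)).
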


\begin{proof}  
 (1)$\implies$(2): Suppose that $\Phi$ is not topologically free. 
 Let $t \in \mathbb{F} \setminus \{\emptyset\}$ be such that $\operatorname{Fix}(t)$ has non-empty interior. 
 Then, there are $\af \in \CL^*$ and $A \in \CI_\af$ such that $\emptyset \neq \CN(\af,A) \subseteq \operatorname{Fix}(t)$.
 Let $\mu=(e^{\mu_i}_{\xi_i}) \in \CN(\af,A) \subseteq \operatorname{Fix}(t)$.  Since $\mu \in U_{t}$ and $\CP(\mu)_{1,|\af|}=\af$, we have $t=\zeta\eta^{-1}$ for some $\zeta, \eta \in \CW^*$, where 
 $\zeta=\af\zeta'$ for some $\zeta' \in \CW^*$.  
 Now, since $\varphi_t(\mu)=\mu$,  there exist $\gm \in \CL^*$ such that either 
 $\CP(\mu)=\zeta\gm^\infty$, $\eta=\zeta\gm$ and $t=\zeta\gm^{-1}\zeta^{-1}$ or $\CP(\mu)=\eta\gm^\infty$, $\zeta=\eta\gm$ and $t=\eta\gm\eta^{-1}$.
 On the other hand, $\varphi_t(\mu)=\mu$ also means that  $(t, \mu) \in \operatorname{Iso} (\mathbb{F} \ltimes_\varphi \partial E)$. So, we can conclude that 
  there exist $\dt, \gm \in \CL^*$ such that  $t = \dt\gm\dt^{-1}$ or $t=\dt \gm^{-1} \dt^{-1}$, $\CP(\mu)=\dt \gm^\infty$ and  $e^{\gm_1}_{\xi_{|\dt|+1}} \cdots e^{\gm_{|\gm|}}_{\xi_{|\dt|+|\gm|}}$ is a loop at $d(e^{\dt_{|\dt|}}_{\xi_{|\dt|}})$ so that $\mu=e^{\dt_1}_{\xi_1} \cdots e^{\dt_{|\dt|}}_{\xi_{|\dt|}}(e^{\gm_1}_{\xi_{|\dt|+1}} \cdots e^{\gm_{|\gm|}}_{\xi_{|\dt|+|\gm|}})^{\infty} $.
 Here, we may assume without loss of generality that $\dt=\af\dt'$ for some $\dt' \in \CL^*$ since $t=\dt\gm^n\gm^{\pm}(\dt\gm^n)^{-1}$
 for any $n\in \N$.   
 Put $B:=\theta_{\dt'\gm}(A) \in \CI_{\dt\gm}$. 
   Then by the same arguments used in the proof of Proposition \ref{char:abelian core}, one can see that $(\gm, B)$ is a cycle with no exits. 
 Thus, $(\CB, \CL,\theta)$ dose not satisfy Condition (L).
 
 (2)$\implies$(3): Let  $U$ be  an open set in $ \partial E$. Since $\Phi$ is topologically free, the set $\{\mu \in \partial E: \mu \in U_{t^{-1}},\ \varphi_t(\mu) \neq \mu\}$ is  open dense in $\partial E$ for each $t \neq \emptyset$. Then by Baire category theorem, we have $\cap_{t \in \mathbb{F} \setminus \{\emptyset\}} \{\mu \in \partial E: \mu \in U_{t^{-1}},\ \varphi_t(\mu) \neq \mu\} $ is dense in $\partial E$. Thus, there exist $\nu \in U$ such that $\varphi_t(\nu) \neq \nu$ for all $t \neq \emptyset$. So, $(\mathbb{F} \ltimes_\varphi \partial E)_\nu^\nu=\{\nu\}$. Hence, $\mathbb{F} \ltimes_\varphi \partial E$ is topologically principal.
 
 (3)$\implies$(1): Suppose that $\mathbb{F} \ltimes_\varphi \partial E$ is topologically principal and let $(\af,A)$ be a cycle. Consider the open set $\CN(\af,A)$ in $\partial E$ and take $\mu=(e^{\mu_i}_{\eta_i}) \in \CN(\af,A)$ with trivial isotropy. Since $A \in \eta_{|\af|}$, we see that $(\af,\eta)$ is an ultrafilter cycle, and hence, 
$\mu_{[1, |\af|]}=e^{\af_1}_{\eta_1} \cdots e^{\af_{|\af|}}_{\eta_{|\af|}}$ is a loop at $h_{[\af_{|\af|}]\emptyset}(\eta_{|\af|})$.  If $\mu=(e^{\af_1}_{\eta_1} \cdots e^{\af_{|\af|}}_{\eta_{|\af|}})^\infty$, then 
$(\af^n, \mu ) \in (\mathbb{F} \ltimes_\varphi \partial E)^\mu_\mu $ for all $n \in \N$. However, this is not the case since $\mu$ has trivial isotropy. Thus, 
$\mu \neq(e^{\af_1}_{\eta_1} \cdots e^{\af_{|\af|}}_{\eta_{|\af|}})^\infty$. We then consider the following 2 cases:

If $\CP(\mu)=\af^n\af_{1,k}\gm$ for some $n \in \N$, $0 \leq k < |\af|$ and $\gm \in \CL^{\leq \infty} \setminus \{\emptyset\}$ such that $\gm_1 \neq \af_{k+1}$, then 
$\theta_{\af_{1,k}\gm_1}(A) \in \eta_{|\af^n|+k+1}$, so, $ \emptyset \neq \theta_{\af_{1,k}\gm_1}(A)$, and hence, $\Delta_{\theta_{\af_{1,k}}(A)} \neq \{\af_{k+1}\}$. Thus, $(\af,A) $ has an exit. 

If $\CP(\mu)=\af^n\af_{1,k}$ for some $n \in \N$ and $0 \leq k < |\af|$, then $d(\mu) \in E^0_{sg}$. So, $\theta_{\af_{1,k}}(A) \notin \CB_{reg}$. Thus, we have either $\Delta_{\theta_{\af_{1,k}}(A)}$ is an infinite set or there exists $\emptyset \neq B  \subseteq \theta_{\af_{1,k}}(A)$ such that $\Delta_B =\emptyset \neq \{\af_{k+1}\}$. In both cases, $(\af,A)$ has an exit.

Therefore, we can conclude that every cycle has an exit. 

(3)$\implies$(4): If $\mathbb{F} \ltimes_\varphi \partial E$  is topologically principal, then $\mathbb{F} \ltimes_\varphi \partial E$  is effective by \cite[Proposition 3.6(i)]{Renault2008}.

To prove (4)$\implies$(3), we assume that $\CB$ and $\CL$ are countable. Then $\mathbb{F} \ltimes_\varphi \partial E$ is second countable. So, if $\mathbb{F} \ltimes_\varphi \partial E$  is  effective, then $\mathbb{F} \ltimes_\varphi \partial E$ 
is topologically principal by  \cite[Proposition 3.6(ii)]{Renault2008}.
\end{proof}

\begin{prop}\label{MASA} Let $(\CB, \CL,\theta, \CI_\af)$ be a generalized Boolean dynamical system.  If $(\CB, \CL,\theta)$ satisfies Condition $(L)$, then $D$ is a MASA in $C^*(\CB, \CL,\theta, \CI_\af)$. Moreover, If $\CB$ and $\CL$ are countable, then the converse also holds true.
\end{prop}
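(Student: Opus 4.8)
Throughout write $\CG := \mathbb{F} \ltimes_\varphi \partial E$ and recall that the isomorphism $\kp$ identifies $C^*(\CB,\CL,\theta,\CI_\af)$ with $C^*(\CG)$, carrying $D$ onto $C_0(\partial E)=C_0(\CG^{(0)})$ and, by Proposition \ref{char:abelian core}, carrying $M$ onto $C^*(\operatorname{Iso}(\CG)^{\circ})$. The plan is to reduce the assertion that $D$ is a MASA to the single groupoid condition that $\CG$ be effective, that is, $\operatorname{Iso}(\CG)^{\circ}=\CG^{(0)}$, and then to read off both implications from Proposition \ref{equiv:Condition (L)}.

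The crux is the commutant identity $D'=M$. The inclusion $M\subseteq D'$ is already Proposition \ref{prop:abelian core}; for the reverse inclusion I would invoke the standard fact that the relative commutant of $C_0(\CG^{(0)})$ inside the $C^*$-algebra of a Hausdorff \'etale groupoid is exactly $C^*(\operatorname{Iso}(\CG)^{\circ})$, and transport it back through $\kp$. Here one uses that $\CG$ is amenable, so that the completion appearing throughout the paper is the one to which this commutant computation applies. Granting $D'=M$, the diagonal $D$ is a MASA if and only if $M=D$.

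The next step is to show that $M=D$ if and only if $\CG$ is effective. Since $\CG$ is Hausdorff and \'etale, $\CG^{(0)}$ is clopen in the \'etale subgroupoid $\operatorname{Iso}(\CG)^{\circ}$, in which it is always contained. If $\operatorname{Iso}(\CG)^{\circ}\neq\CG^{(0)}$, I would choose a compact open bisection $V\subseteq\operatorname{Iso}(\CG)^{\circ}\setminus\CG^{(0)}$; then $1_V\in C^*(\operatorname{Iso}(\CG)^{\circ})=\kp(M)$ while $1_V\notin C_0(\CG^{(0)})=\kp(D)$, so $M\neq D$. Conversely, if $\operatorname{Iso}(\CG)^{\circ}=\CG^{(0)}$ then $\kp(M)=C^*(\CG^{(0)})=C_0(\CG^{(0)})=\kp(D)$. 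Combining these observations, $D$ is a MASA if and only if $M=D$, if and only if $\CG$ is effective.

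Finally I would assemble the two implications through Proposition \ref{equiv:Condition (L)}. If $(\CB,\CL,\theta)$ satisfies Condition (L), then $\CG$ is effective by the implication (1)$\implies$(4) there, which carries no countability hypothesis, so $D$ is a MASA; moreover effectiveness forces $M=D$, giving the stated coincidence of the abelian core with the diagonal. Conversely, assuming $\CB$ and $\CL$ countable, if $D$ is a MASA then $\CG$ is effective, and the implication (4)$\implies$(1) of Proposition \ref{equiv:Condition (L)} --- whose proof uses second countability through the Baire category argument --- yields Condition (L). The main obstacle is the reverse commutant inclusion $D'\subseteq M$: in contrast with $M\subseteq D'$, it is not a formal consequence of the generator computations and must be imported from the general theory of diagonals in \'etale groupoid $C^*$-algebras, with the amenability of $\CG$ ensuring its validity for the completion used here.
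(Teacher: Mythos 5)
The gap is the commutant inclusion $D'\subseteq M$, on which your entire forward direction rests. You invoke it as a ``standard fact'' that the relative commutant of $C_0(\CG^{(0)})$ in the $C^*$-algebra of a Hausdorff \'etale groupoid equals $C^*(\operatorname{Iso}(\CG)^{\circ})$, but this is not a citable standard result, and it is genuinely delicate. Note first that it is strictly stronger than anything the paper proves: since $M$ is abelian and contains $D$, the identity $D'\cap C^*(\CB,\CL,\theta,\CI_\af)=M$ would say that the abelian core is \emph{always} a MASA, with no Condition (L) at all --- a statement this paper never makes (it establishes only $M\subseteq D'$, Proposition \ref{prop:abelian core}); for graph algebras that stronger statement is essentially the main theorem of \cite{NR2012}, proved there by a substantial argument, not a formal one. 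Second, the mechanism that proves such results for the diagonal breaks down for the interior of the isotropy: when $\operatorname{Iso}(\CG)^{\circ}$ is not closed in $\CG$, restriction of functions does not even map $C_c(\CG)$ into $C_c(\operatorname{Iso}(\CG)^{\circ})$ (the restricted support need not be compact), so there is no obvious conditional expectation onto $C^*(\operatorname{Iso}(\CG)^{\circ})$, and the groupoid results in this direction in \cite{BNRSW} carry extra hypotheses (closedness of $\operatorname{Iso}(\CG)^{\circ}$, second countability, and they concern the reduced completion). Appealing to amenability of $\CG$ does not by itself repair any of this.

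The repair is immediate, and it is exactly what the paper does: in the only place you use the identity, $\CG$ is already effective (Condition (L) implies effectiveness by Proposition \ref{equiv:Condition (L)}, with no countability assumption), and under effectiveness your claim collapses to the assertion that $C_0(\CG^{(0)})$ is maximal abelian in $C^*(\CG)$, which is precisely \cite[Proposition II.4.7(ii)]{Renault}; citing that instead of the unproved general commutant identity turns your forward direction into the paper's proof. Your converse, by contrast, is correct and genuinely different from the paper's. The paper extracts effectiveness from the MASA hypothesis by appealing to \cite[Proposition II.4.7(ii)]{Renault} again, whereas you get it from ingredients internal to the paper: $M\subseteq D'$ forces $M=D$ once $D$ is a MASA, while if $\CG$ were not effective, ampleness yields a nonempty compact open bisection $V\subseteq \operatorname{Iso}(\CG)^{\circ}\setminus\CG^{(0)}$ with $1_V\in\kp(M)$ by Proposition \ref{char:abelian core} and $1_V\notin\kp(D)$ (the canonical expectation onto $C_0(\CG^{(0)})$ kills $1_V$ but fixes $\kp(D)$), contradicting $M=D$; then the implication (4)$\implies$(1) of Proposition \ref{equiv:Condition (L)}, which is where countability enters, gives Condition (L). That half of your argument avoids Renault's theorem entirely and is worth keeping.
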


\begin{proof} Put $\CG:=\mathbb{F} \ltimes_\varphi \partial E$ and notice that we have an  isomorphism $\kp: C^*(\CB, \CL,\theta, \CI_\af) \to C^*(\CG) $ that maps $D$ onto $C_0(\CG^{(0)}) (\cong C_0(\partial E))$.
 If $(\CB, \CL,\theta)$ satisfies Condition $(L)$, then $\CG$ is effective by Proposition \ref{equiv:Condition (L)}. Thus, by \cite[Proposition II.4.7(ii)]{Renault}, $C_0(\CG^{(0)})$ is a MASA in $C^*(\CG)$, and hence,  $D$ is a MASA in $C^*(\CB, \CL,\theta, \CI_\af)$.

For the converse, we assume that  $\CB$ and $\CL$ are countable and that $D$ is a MASA in $C^*(\CB, \CL,\theta, \CI_\af)$.  Then, $\CG$ is effective by \cite[Proposition II.4.7(ii)]{Renault}. Thus,  $(\CB, \CL,\theta)$ satisfies Condition $(L)$ by Proposition \ref{equiv:Condition (L)}.
\end{proof}

\begin{cor}  Let $(\CB, \CL,\theta, \CI_\af)$ be a generalized Boolean dynamical system.  If $(\CB, \CL,\theta)$ satisfies Condition $(L)$, then $M$ is a MASA in   $C^*(\CB, \CL,\theta, \CI_\af)$.
\end{cor}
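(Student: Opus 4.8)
The plan is to show that under Condition (L) the abelian core $M$ actually collapses onto the diagonal subalgebra $D$, and then to invoke the fact, already established, that $D$ is a MASA. The whole argument rests on sandwiching $M$ between $D$ and its relative commutant and then using the characterization of MASAs in terms of relative commutants.

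First I would recall from Proposition~\ref{prop:abelian core} the chain of containments $D \subseteq M \subseteq D'$, where $D'$ is the relative commutant of $D$ inside $C^*(\CB, \CL, \theta, \CI_\af)$ as defined just before that proposition. Next I would apply Proposition~\ref{MASA}: since $(\CB, \CL, \theta)$ satisfies Condition (L), the diagonal subalgebra $D$ is a MASA in $C^*(\CB, \CL, \theta, \CI_\af)$. The key structural point is the standard characterization that an abelian $C^*$-subalgebra of a $C^*$-algebra is a MASA if and only if it equals its own relative commutant; this follows because any self-adjoint element commuting with $D$ together with $D$ generates an abelian $C^*$-subalgebra, so maximality forces that element back into $D$. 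Consequently, $D$ being a MASA yields the identity $D' = D$.

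Combining these observations, the chain $D \subseteq M \subseteq D' = D$ collapses, so $M = D$. Since $D$ is a MASA, it follows at once that $M$ is a MASA in $C^*(\CB, \CL, \theta, \CI_\af)$, which is the assertion of the corollary.

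I do not expect any real obstacle here, as the corollary is essentially immediate once Propositions~\ref{prop:abelian core} and~\ref{MASA} are in hand. The only point worth stating explicitly is the equivalence between being a MASA and equaling one's own relative commutant, since it is precisely this equivalence that converts the maximality of $D$ among abelian subalgebras into the algebraic equality $D' = D$ that forces $M = D$.
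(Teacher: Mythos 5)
Your proposal is correct and follows exactly the paper's own argument: Condition (L) gives that $D$ is a MASA (Proposition \ref{MASA}), hence $D'=D$, so the chain $D \subseteq M \subseteq D'$ from Proposition \ref{prop:abelian core} collapses to $M=D$, which is a MASA. The only difference is that you spell out the standard equivalence between maximality and $D'=D$ (via self-adjoint elements of the commutant, and implicitly real/imaginary parts for general elements), which the paper leaves implicit.
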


\begin{proof} If $(\CB, \CL,\theta)$ satisfies Condition $(L)$, then $D$ is a MASA. Therefore $D'=D$. So,  $M=D$ by Lemma \ref{prop:abelian core}, and hence, $M$ is a MASA.
\end{proof}

\end{document}